\tikzset{snake it/.style={decorate, decoration=snake}}
\newtheorem{theorem}{Theorem}[section]
\newtheorem{lemma}[theorem]{Lemma}
\newtheorem{proposition}[theorem]{Proposition}
\theoremstyle{definition}
\newtheorem{definition}[theorem]{Definition}
\newtheorem{example}[theorem]{Example}
\theoremstyle{remark}
\newtheorem{remark}[theorem]{Remark}
\numberwithin{equation}{section}
\newcommand{\abs}[1]{\lvert#1\rvert}
\newcommand{\norm}[1]{\lvert\lvert#1\rvert\rvert}
\begin{document}

\title{Index bounded relative symplectic cohomology}

\author{Yuhan Sun}

\date{\today}

\address{Hill Center, Department of Mathematics, Rutgers University, 110 Frelinghuysen Rd, Piscataway NJ 08854.}

\email{sun.yuhan@rutgers.edu}

\begin{abstract}
We study the relative symplectic cohomology with the help of an index bounded contact form. For a Liouville domain with an index bounded boundary, we construct a spectral sequence which starts from its classical symplectic cohomology and converges to the relative symplectic cohomology of it inside a Calabi-Yau manifold.
\end{abstract}

\maketitle


\tableofcontents

\section{Introduction}

Given a closed symplectic manifold $(M, \omega)$ and a compact subset $D$ of $M$, the relative symplectic cohomology $SH_{M}(D)$ is a Floer-theoretic invariant, which captures both dynamical and topological information of the pair $(M, D)$. Its construction by Varolgunes \cite{Var} is based on the Hamiltonian Floer theory of $M$ with more algebraic ingredients. Roughly speaking, one considers a family of increasing Hamiltonian functions that go to zero on $D$ while going to positive infinity on $M-D$. Then $SH_{M}(D)$ is defined as the homology of a completed telescope of Floer complexes, given by this family of Hamiltonian functions.

The idea of using Hamiltonian functions ``localized at a subset'' may date back to Cieliebak-Floer-Hofer \cite{CFH} and Viterbo \cite{Vit1}. Recently there are several new versions of Hamiltonian Floer theories related to this idea and aimed at various local-to-global problems. Besides \cite{Var}, let us mention an incomplete list: Groman \cite{Gro}, McLean \cite{M2020} and Venkatesh \cite{Ven}. A priori, the definitions in \cite{Gro, M2020, Var, Ven} are different, depending on whether $M$ is open or closed, taking completion with the action filtration or the Novikov filtration, and the orders of taking different limits. It would be interesting to compare these theories to attack particular problems. But in this article we mainly focus on the version of $SH_{M}(D)$ by Varolgunes.

Along its definition in \cite{Var}, several good properties of this invariant $SH_{M}(D)$ have been established, including the Hamiltonian isotopy invariance, a K\"unneth formula, a displaceability criterion and the Mayer-Vietoris property. These properties indicate that this invariant would play an important role in symplectic topology and mirror symmetry. One motivation of it comes from mirror symmetry suggested by Seidel \cite{Se2} and the family Floer program. On the other hand, some symplectic topological applications have already appeared in \cite{TVar, DGPZ}. Also see its relation with quantum cohomology by Borman-Sheridan-Varolgunes \cite{BSV}.

Here we study a computational side of this invariant, and provide some applications to symplectic topology. The main goal is to construct a filtration on the underlying complex of this cohomology and look at the induced spectral sequence. Now we set up notations and state our results. Let $(M, \omega)$ be a closed symplectic manifold. We say $M$ is \textit{symplectic Calabi-Yau} if $c_{1}(TM)=0$. A \textit{convex domain} $(D, \theta)$ in $M$ is a compact codimension zero symplectic submanifold of $M$, with a boundary $\partial D$ and a one-form $\theta$ locally defined near $\partial D$, such that the restriction of $\theta$ to $\partial D$ is a contact form and the local Liouville vector field points outward. A \textit{Liouville domain} $(D, \theta)$ in $M$ is a convex domain in $M$ such that the one-form $\theta$ is defined on all of $D$ and the restriction of $\omega$ on $D$ is $d\theta$. We will focus on a special family of convex domains, where the restriction of $\theta$ to $\partial D$ are \textit{index bounded} contact forms, see Definition \ref{index}.

For a Liouville domain in $M$ we will equip it with an auxiliary form $\tilde{\omega}$ which represents a class in $H^{2}(M, D; \mathbb{R})$, see Lemma \ref{aux}. If $[\tilde{\omega}]$ has integral values on $H_{2}(M, D; \mathbb{Z})$, then we say it is integral. This auxiliary form $\tilde{\omega}$ will be used to characterize how far a Floer solution travels outside $D$.

Let $\Lambda_{0}$ be the Novikov ring and $\Lambda_{E}$ be the truncated Novikov ring, see Section \ref{sec:back} for the notations. Our main result is the following.

\begin{theorem}\label{main}
Let $(M, \omega)$ be a closed symplectic Calabi-Yau manifold and $D$ be a Liouville domain in $M$ with an index bounded boundary. Suppose that $[\tilde{\omega}]$ is integral. Given any positive number $E$, there is a truncated invariant $SH_{M}(D; \Lambda_{E})$ such that
\begin{enumerate}
\item There is a spectral sequence that starts from the classical symplectic cohomology $SH(D; \Lambda_{E})$ with coefficient $\Lambda_{E}$, and converges to $SH_{M}(D; \Lambda_{E})$.
\item If the class $[\tilde{\omega}]$ vanishes on $H_{2}(M,D)$, then the above spectral sequence degenerates at the first page, which shows that $SH_{M}(D; \Lambda_{E})\cong SH(D; \Lambda_{E})$.
\item For an increasing sequence $E_{1} <E_{2}< \cdots$ that goes to positive infinity, the inverse limit of the truncated invariant recovers the relative symplectic cohomology
$$
(\varprojlim_{E_{i}}SH^{k}_{M}(D; \Lambda_{E_{i}}))\otimes_{\Lambda_{0}}\Lambda\cong SH^{k}_{M}(D)\otimes_{\Lambda_{0}}\Lambda.
$$
\end{enumerate}
\end{theorem}

\begin{remark}
In the definition of an index bounded contact form, we assume that it is non-degenerate. We will give a perturbative method in Section \ref{sec:examples} that works for Morse-Bott non-degenerate contact forms.
\end{remark}

The proof of the above theorem draws much inspiration from the work \cite{M2020} of McLean and its usage of the index bounded condition. Now we sketch the proof. The Hamiltonian functions we are using to compute $SH_{M}(D)$ are approximately zero on $D$ and positive infinity outside $D$, as a direct limit. The non-constant periodic orbits of our interest lie around the boundary of $D$, see Figure \ref{Ham}. We call the integral of the Hamiltonian function over a periodic orbit the \textit{level of the orbit}. Then the levels of these orbits can go to either zero or positive infinity. We first define a valuation on the free module generated by these orbits. However, due to the completion procedure, there will be elements with a negative infinite valuation which comes from limits of orbits going to infinite high level. Then we use the index bounded condition to ignore these high limits of orbits. As a consequence, the original underlying complex of this relative invariant is quasi-isomorphic to a new complex without the high limits. And the same valuation on the new complex gives an exhaustive filtration, which will induce a convergent spectral sequence.

An important class of examples fitting into the above theorem comes from simply-connected Lagrangian submanifolds in Calabi-Yau manifolds. Let $L$ be a simply-connected Lagrangian submanifold in a Calabi-Yau manifold $M$. Take $D$ as a Weinstein neighborhood of $L$, which is isomorphic to a disk bundle $D_{r}T^{*}L$ of the cotangent bundle of $L$, with respect to some Riemannian metric $g$ on $L$. There is a correspondence between the geodesics of $g$ and the Reeb orbits on the co-sphere bundle of $T^{*}L$. Hence the index bounded condition for the contact form on the co-sphere bundle will be satisfied if the metric $g$ satisfies some relations between the length of closed geodesics and their Morse indices. For many simply-connected manifolds, the existence of such a nice Riemannian metric is known. (In Section \ref{sec:examples} we show it is true when $g$ has a positive Ricci curvature.) Then we obtain a spectral sequence starting from $SH(D; \Lambda_{E})$ and converging to $SH_{M}(D; \Lambda_{E})$. Note that $SH_{M}(D; \Lambda)\otimes_{\Lambda_{0}}\Lambda$ detects the displaceability of $D$ inside $M$ (Theorem \ref{t: stable}), and it does not depend on $r$ in the index bounded case (Proposition 1.13 in \cite{TVar}). Hence we can let $r\rightarrow 0$ to detect the displaceability of $L$ itself inside $M$. On the other hand, usual invariant to detect the displaceability of $L$, the self-Lagrangian Floer cohomology $HF(L)$ may not be defined due to possible holomorphic disks on $L$ with Maslov index zero. Moreover, by using the Mayer-Vietoris property, we can also study the complement of Lagrangian submanifolds. We present one sample application of Theorem \ref{main}.

\begin{proposition}
Let $(M, \omega)$ be a symplectic Calabi-Yau manifold with dimension greater than four and $\omega$ represents an integral class in $H^{2}(M)$. For a simply-connected Lagrangian $S$ in $M$ and a Weinstein neighborhood $U$ of $S$, we have that $M-U$ is not stably-displaceable in $M$.
\end{proposition}
\begin{proof}
(See a more detailed proof in Proposition \ref{comp}) By using our spectral sequence we can show that $SH^{2n}_{M}(U)\otimes_{\Lambda_{0}}\Lambda= 0$, where $2n$ is the dimension of $M$. Hence the result follows from the stable displacement criterion (Theorem \ref{t: stable}) and the Mayer-Vietoris property of the relative symplectic cohomology.
\end{proof}

\begin{remark}
This proposition can be regarded as an analogue of a result of Ishikawa: if $U$ is a round ball in a Calabi-Yau manifold $M$ then $M-U$ is not stably-displaceable, see Theorem 1.1 \cite{I} and Corollary 1.15 \cite{TVar}. Ishikawa's proof uses computations of spectral invariants of certain distance functions, which shows that $M-U$ is always a super-heavy set.
\end{remark}

The outline of this article is as follows. In Section \ref{sec:back} we review backgrounds about Hamiltonian Floer theories. In Section \ref{sec:fil} and \ref{sec:ss} we construct the filtration and show its properties to prove Theorem \ref{main}. In Section \ref{sec:examples} we discuss some extensions of the theorems as well as applications. 

\begin{remark}
We focus on the case that $M$ is Calabi-Yau and $D$ is index bounded. More local-to-global results of $SH_{M}(D)$ in other interesting cases can be found in the work by Borman-Sheridan-Varolgunes \cite{BSV} and by Groman-Varolgunes \cite{GroV}.
\end{remark}

\subsection*{Acknowledgments}
The author acknowledges Mark McLean for his generous guidance on this project. The author also acknowledges Umut Varolgunes for helpful discussions. 

\section{Backgrounds}\label{sec:back}
Now we review the construction of symplectic cohomology theories. First we specify the ring and field that will be used. The Novikov ring $\Lambda_{0}$ and its field $\Lambda$ of fractions are defined by
$$
\Lambda_{0}=\lbrace \sum_{i=0}^{\infty}a_{i}T^{\lambda_{i}}\mid a_{i}\in \mathbb{C}, \lambda_{i}\in\mathbb{R}_{\geq 0}, \lambda_{i}<\lambda_{i+1}, \lim_{i\rightarrow \infty}\lambda_{i}=+\infty \rbrace
$$
and
$$
\Lambda=\lbrace \sum_{i=0}^{\infty}a_{i}T^{\lambda_{i}}\mid a_{i}\in \mathbb{C}, \lambda_{i}\in\mathbb{R}, \lambda_{i}<\lambda_{i+1}, \lim_{i\rightarrow \infty}\lambda_{i}=+\infty \rbrace
$$
where $T$ is a formal variable. The maximal ideal of $\Lambda_{0}$ is defined by
$$
\Lambda_{+}=\lbrace \sum_{i=0}^{\infty}a_{i}T^{\lambda_{i}}\mid a_{i}\in \mathbb{C}, \lambda_{i}\in\mathbb{R}_{>0}, \lambda_{i}<\lambda_{i+1}, \lim_{i\rightarrow \infty}\lambda_{i}=+\infty \rbrace.
$$
There is a valuation $v: \Lambda \rightarrow \mathbb{R}\cup \lbrace +\infty \rbrace$ by setting
$$
v(\sum_{i=0}^{\infty}a_{i}T^{\lambda_{i}}):= \min_{i} \lbrace \lambda_{i}\mid a_{i}\neq 0\rbrace \quad \text{and} \quad v(0):=+\infty
$$
which makes $\Lambda_{0}$ to be a complete valuation ring. When we say the completion of a $\Lambda_{0}$-module we mean the completion with respect to this valuation. We write
$$
\Lambda_{\geq r}:= v^{-1}([r, +\infty]) \quad \text{and} \quad \Lambda_{>r}:= v^{-1}((r, +\infty]), \quad \forall r\in (-\infty, +\infty),
$$
which are ideals of $\Lambda_{0}$. So $\Lambda_{0}$ is a short notation for $\Lambda_{\geq 0}$ and $\Lambda_{+}$ for $\Lambda_{>0}$. Later when we fix an energy bound $E>0$, we just write $\Lambda_{E}:= \Lambda_{0}/\Lambda_{\geq E}$.

\subsection{Hamiltonian Floer theory on closed manifolds}
Now we set up the background on Hamiltonian Floer theory. We work with a closed symplectic Calabi-Yau manifold $(M, \omega)$. Hence foundational details can be found in \cite{HS} and \cite{Salamon}.

A smooth function $H: M\rightarrow \mathbb{R}$ determines a smooth vector field $X_{H}$ such that $dH(\cdot)= \omega(X_{H}, \cdot)$. We say $H$ is a Hamiltonian function and $X_{H}$ is the associated Hamiltonian vector field. Let $\mathcal{L}M:= C^{\infty}(S^{1}, M)$ be the space of free loops in $M$, where we always view $S^{1}=\mathbb{R}/ \mathbb{Z}$ and write $t$ as the coordinate of $S^{1}$. We call $t$ the time variable.

Moreover, we can consider a family of functions $H_{t}: M\times S^{1}\rightarrow \mathbb{R}$ parameterized by $S^{1}$. Then we have time-dependent Hamiltonian vector fields $X_{H_{t}}$. Integrating it we obtain a family of Hamiltonian symplectic diffeomorphisms $\phi^{t}: M\rightarrow M$. A loop $\gamma \in \mathcal{L}M$ is called a time-one Hamiltonian orbit if $\gamma'(t)= X_{H_{t}}$. In this article, we only consider the component $\mathcal{L}_{0}M$ which contains contractible loops. Hence from now on, all Hamiltonian orbits are assumed to be contractible. We write
$$
\mathcal{CP}_{H_{t}}= \lbrace \gamma\in \mathcal{L}_{0}M\mid \gamma'(t)= X_{H_{t}}\rbrace
$$
as the set of contractible one-periodic orbits of $H_{t}$. An orbit is non-degenerate if the Poincar\'e return map
$$
d\phi^{1}: T_{\gamma(0)}M\rightarrow T_{\gamma(0)}M
$$
does not have eigenvalue one. And we say a Hamiltonian $H_{t}$ is non-degenerate if all of its one-periodic orbits are non-degenerate.

Next we assign an index $CZ(\gamma)$ to each orbit $\gamma$, the Conley-Zehnder index, see Lecture 2 in \cite{Salamon}. By the Calabi-Yau condition, this index does not depend on choices of cappings. We grade our orbits by setting
\begin{equation}\label{grading}
\mu_{H_{t}}(\gamma):= n+ CZ(\gamma)
\end{equation}
where $2n$ is the real dimension of $M$. (Our grading is different from that in \cite{Salamon}, since we use cohomology instead of homology.) When we say a degree-$k$ or an index-$k$ orbit we mean an orbit $\gamma$ with $\mu_{H_{t}}(\gamma)=k$. We remark that for a constant orbit of a $C^{2}$-small Morse function, its degree defined above equals its Morse index.

Therefore $\mathcal{CP}_{H_{t}}$ becomes a graded set
$$
\mathcal{CP}_{H_{t}}=\bigoplus_{k\in \mathbb{Z}} \mathcal{CP}_{H_{t}}^{k}
$$
where $\mathcal{CP}_{H_{t}}^{k}$ is the set of orbits with index $k$.

Then for a non-degenerate Hamiltonian $H_{t}$, we define
\begin{equation}\label{complex}
CF^{k}(H_{t}; \Lambda_{0})=\lbrace \sum_{i=1} c_{i}\gamma_{i}\mid c_{i}\in\Lambda_{0}, \gamma_{i}\in \mathcal{CP}_{H_{t}}^{k}\rbrace
\end{equation}
which is the free $\Lambda_{0}$-module generated by index-$k$ orbits. Similarly we can define
\begin{equation}\label{complex+}
CF^{k}(H_{t}; \Lambda)=\lbrace \sum_{i=1} c_{i}\gamma_{i}\mid c_{i}\in\Lambda, \gamma_{i}\in \mathcal{CP}_{H_{t}}^{k}\rbrace
\end{equation}
which is the $\Lambda$-vector space generated by index-$k$ orbits. We don't grade the formal variable $T$.

By using a family of compatible almost complex structures $J_{t}$, we can study the solutions of the Floer equation
\begin{equation}\label{Floer}
\partial_{s}u +J_{t}(\partial_{t}u -X_{H_{t}}) =0
\end{equation}
for $u: \mathbb{R}\times S^{1}\rightarrow M$. Here $s$ is the $\mathbb{R}$-coordinate and $t$ is the $S^{1}$-coordinate on the domain. For two orbits $\gamma_{-}, \gamma_{+}$ and a homotopy class $A\in \pi_{2}(M; \gamma_{-}\cup \gamma_{+})$, consider the solution space
\begin{equation}\label{moduli}
\begin{aligned}
\mathcal{M}(\gamma_{-}, \gamma_{+}; A)=& \lbrace u: \mathbb{R}\times S^{1}\rightarrow M\mid \partial_{s}u +J_{t}(\partial_{t}u -X_{H_{t}}) =0,\\
& u(-\infty, t)=\gamma_{-}, u(+\infty, t)=\gamma_{+}, [u]= A\in \pi_{2}(M; \gamma_{-}\cup \gamma_{+})\rbrace /\sim.
\end{aligned}
\end{equation}
There is an $\mathbb{R}$-action on this space by translating the $s$-coordinate of a solution $u$, and $\sim$ is the quotient of this $\mathbb{R}$-action. The $L^{2}$-energy of a solution $u$ is
$$
0\leq E(u)= \int \abs{\partial_{s}u}^{2}= \int u^{*}\omega +\int_{\gamma_{+}} H_{t} -\int_{\gamma_{-}} H_{t}.
$$
For generic pairs $(H_{t}, J_{t})$, the solution space is an $l$-dimensional manifold where
$$
\mu_{H_{t}}(\gamma_{+})-\mu_{H_{t}}(\gamma_{-})= l+1.
$$
We call a pair $(H_{t}, J_{t})$ satisfying the above condition a regular pair. By the Gromov-Floer compactness theorem, when $\gamma_{-}, \gamma_{+}, A$ are fixed, the above solution spaces admit compactifications by adding broken Floer trajectories and $J$-holomorphic sphere bubbles. The bubbles can be ruled out by using the Calabi-Yau condition when the moduli space is zero or one-dimensional, see \cite{HS}. There are also coherent orientations on these moduli spaces. In particular, when the moduli space is zero-dimensional, we can count the signed number of elements, which we denote by $n(\gamma_{-}, \gamma_{+}; A)$.

Then we define an operator
$$
d: CF^{k}(H_{t})\rightarrow CF^{k+1}(H_{t})
$$
with either $\Lambda_{0}$ or $\Lambda$ coefficients, by setting
\begin{equation}\label{operator}
d(\gamma_{-}):= \sum_{\gamma_{+}}\sum_{[u]=A} n(\gamma_{-}, \gamma_{+}; A)\cdot \gamma_{+} \cdot T^{\int u^{*}\omega +\int_{\gamma_{+}} H_{t} -\int_{\gamma_{-}} H_{t}}.
\end{equation}
The right-hand side is summed over all $\gamma_{+}$ with $\mu_{H_{t}}(\gamma_{+})-\mu_{H_{t}}(\gamma_{-})=1$ and all classes $A\in \pi_{2}(M; \gamma_{-}\cup \gamma_{-})$. It may not be a finite sum, but it converges as an element in $CF^{k}(H_{t})$, by the Gromov compactness theorem. Then we extend this operator $\Lambda_{0}$ or $\Lambda$-linearly to $CF^{k}(H_{t})$.

By the analysis of codimension one boundaries of $\mathcal{M}(\gamma_{-}, \gamma_{+}; A)$ with
$$
\mu_{H_{t}}(\gamma_{+})-\mu_{H_{t}}(\gamma_{-})=2,
$$
a big theorem in Hamiltonian Floer theory shows that $d^{2}=0$. Then we write the resulting cohomology groups as $HF^{k}(H_{t}, J_{t}; \Lambda_{0})$ and $HF^{k}(H_{t}, J_{t}; \Lambda)$.

Another theorem shows that $HF^{k}(H_{t}, J_{t}; \Lambda)$ is independent of the choices of generic pairs $(H_{t}, J_{t})$. Hence we can call it the Hamiltonian Floer cohomology of $M$. This invariance result is proved by considering continuation maps between different choices of $(H_{t}, J_{t})$. We sketch it here since we will use it later to define symplectic cohomology, see Section 3.4 in \cite{Salamon} for a full proof.

For simplicity we only vary $H_{t}$. The case for $J_{t}$ can be handled in the same way. Let $H^{\alpha}_{t}$ and $H^{\beta}_{t}$ be two non-degenerate Hamiltonians. Assume that both $(H^{\alpha}_{t}, J_{t})$ and $(H^{\beta}_{t}, J_{t})$ are regular for a fixed $J_{t}$. Then we choose a homotopy $H^{\alpha\beta}_{s, t}$ of Hamiltonians to connect $H^{\alpha}_{t}$ and $H^{\beta}_{t}$. That is,
\begin{equation}
\begin{aligned}
H^{\alpha\beta}_{s, t}: \mathbb{R}\times S^{1}\times M\rightarrow \mathbb{R};\\
H^{\alpha\beta}_{s, t}=
\begin{cases}
H^{\alpha}_{t}, \quad s\leq -1,\\
H^{\beta}_{t}, \quad s\geq 1.
\end{cases}
\end{aligned}
\end{equation}
Then we consider the $s$-dependent Floer equation
\begin{equation}\label{eq: continuation}
\partial_{s}u +J_{t}(\partial_{t}u -X_{H^{\alpha\beta}_{s,t}}) =0
\end{equation}
and the moduli space
$$
\begin{aligned}
\mathcal{M}(\gamma^{\alpha}_{-}, \gamma^{\beta}_{+}; A)=& \lbrace u: \mathbb{R}\times S^{1}\rightarrow M\mid \partial_{s}u +J_{t}(\partial_{t}u -X_{H^{\alpha\beta}_{s,t}}) =0,\\
& u(-\infty, t)=\gamma^{\alpha}_{-}, u(+\infty, t)=\gamma^{\beta}_{+}, [u]= A\in \pi_{2}(M; \gamma^{\alpha}_{-}\cup \gamma^{\beta}_{+})\rbrace.
\end{aligned}
$$
Note that now the equation is $s$-dependent hence there is no $\mathbb{R}$-action. For a generic path $H^{\alpha\beta}_{s, t}$, the above moduli space is a manifold of dimension $\mu_{H^{\beta}_{t}}(\gamma_{+})-\mu_{H^{\alpha}_{t}}(\gamma_{-})$. And it admits a similar compactification by adding broken trajectories. When $\mu_{H^{\beta}_{t}}(\gamma_{+})=\mu_{H^{\alpha}_{t}}(\gamma_{-})$, we define an operator
\begin{equation}\label{continuation}
f^{\alpha\beta}: CF^{k}(H^{\alpha}_{t})\rightarrow CF^{k}(H^{\beta}_{t})
\end{equation}
by setting
$$
f^{\alpha\beta}(\gamma^{\alpha}_{-}):= \sum_{\gamma^{\beta}_{+}}\sum_{[u]=A} n^{\alpha\beta}(\gamma^{\alpha}_{-}, \gamma^{\beta}_{+}; A)\cdot \gamma^{\beta}_{+} \cdot T^{\int u^{*}\omega +\int \partial_{s}(H^{\alpha\beta}_{s, t}(u(s, t)))},
$$
where $n^{\alpha\beta}(\gamma^{\alpha}_{-}, \gamma^{\beta}_{+}; A)$ is a signed count of elements of the above moduli space. Note that the weight
\begin{equation}\label{weight}
\int u^{*}\omega +\int \partial_{s}(H^{\alpha\beta}_{s, t}(u(s, t)))= \int\abs{\partial_{s}u}^{2} +\int \frac{\partial H^{\alpha\beta}_{s, t}}{\partial s}(u(s, t))
\end{equation}
is not necessarily non-negative, hence we need to use $\Lambda$-coefficients now. We call this weight the \textit{topological energy} of a Floer solution. (It is non-negative if the family of Hamiltonian functions satisfies that $\int \partial_{s}H^{\alpha\beta}_{s, t}\geq 0$.) Then one can show that $f^{\alpha\beta}$ is a chain map and moreover $f^{\alpha\beta}\circ f^{\beta\alpha}$ is chain homotopy equivalent to the identity map. 

We add one more lemma here which will be used frequently in later sections.

\begin{lemma}\label{translation}
Let $M$ be a Calabi-Yau manifold and $H_{t}$ be a non-degenerate Hamiltonian function on $M$. For a constant $\Delta >0$, suppose that we have a homotopy $\lbrace H^{s}_{t}\rbrace_{s\in [0,1]}$ such that $H^{s}_{t}= H_{t}+ s\Delta$. Then the continuation map between $CF(H^{0}_{t})$ and $CF(H^{1}_{t})$ is a multiplication by $T^{\Delta}$. In particular, when $\Delta=0$ this shows that the continuation map of a constant homotopy is the identity map.
\end{lemma}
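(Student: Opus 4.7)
The plan is to use that adding a space-constant to a Hamiltonian leaves the associated vector field untouched. I would fix a smooth, monotone cutoff $\rho: \mathbb{R} \to [0,1]$ with $\rho(s) = 0$ for $s \leq -1$ and $\rho(s) = 1$ for $s \geq 1$ and interpolate via $\tilde{H}_{s,t} := H_t + \rho(s)\Delta$. Since $\rho(s)\Delta$ is constant in the $M$-direction for every $s$, one has $X_{\tilde{H}_{s,t}} = X_{H_t}$, so the $s$-dependent continuation Floer equation collapses to the $s$-autonomous Floer equation for $(H_t, J_t)$. In particular, the one-periodic orbit sets of $H^0_t$ and $H^1_t$ coincide with the same Conley-Zehnder indices, so the chain groups $CF^k(H^0_t)$ and $CF^k(H^1_t)$ are canonically identified at the level of generators.

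Next I would analyze the relevant continuation moduli space $\mathcal{M}(\gamma_-, \gamma_+; A)$ in the index-preserving range $\mu(\gamma_-) = \mu(\gamma_+)$. Because the continuation PDE is $s$-autonomous, its solutions come in $\mathbb{R}$-families under translation, and standard Floer theory for the regular pair $(H_t, J_t)$ forces $\mathcal{M}(\gamma_-, \gamma_+; A)/\mathbb{R}$ to be $-1$-dimensional, hence empty, when $\gamma_- \ne \gamma_+$. For $\gamma_- = \gamma_+ = \gamma$ the only solution is the trivial cylinder $u(s,t) = \gamma(t)$, whose linearized Floer operator is surjective by non-degeneracy of $\gamma$. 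Evaluating the weight formula (\ref{weight}) on this trivial cylinder gives $\int u^*\omega = 0$ and
$$
\int_{\mathbb{R} \times S^1} \partial_s\bigl(\tilde{H}_{s,t}(u(s,t))\bigr)\, ds\, dt = \int_{S^1}\!\int_{\mathbb{R}} \rho'(s)\Delta\, ds\, dt = \Delta,
$$
so the total weight is $T^\Delta$. With the usual sign convention the trivial cylinder contributes $+1$, yielding $f^{01}(\gamma) = T^\Delta \gamma$; setting $\Delta = 0$ recovers the identity map.

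The technical point I expect to be most delicate is the transversality of the trivial cylinder under this non-generic, $s$-autonomous choice of continuation datum: perturbing the homotopy in the standard way would break the crucial identity $X_{\tilde{H}_{s,t}} = X_{H_t}$, so generic regularity is not available. Surjectivity of the linearized operator at the trivial cylinder must instead be extracted directly from non-degeneracy of $\gamma$ via the Fredholm theory of the Floer operator on a constant cylinder, a standard but nontrivial ingredient that one would need to cite carefully.
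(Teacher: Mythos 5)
Your proof follows essentially the same approach as the paper: use the fact that an $s$-dependent constant shift leaves the Hamiltonian vector field unchanged, so the continuation equation becomes $s$-autonomous, invoke the resulting $\mathbb{R}$-translation symmetry together with the index count to rule out nonconstant solutions between degree-equal orbits, and evaluate the topological energy on the remaining trivial cylinder to get the weight $T^{\Delta}$. You are more explicit than the paper about the energy computation and about the transversality of the trivial cylinder (which the paper dismisses with ``one can directly check''), but these are elaborations of, not departures from, the paper's argument.
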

\begin{proof}
Since our Hamiltonian functions are just a translation of a fixed one, the Floer equation of the continuation map (\ref{eq: continuation}) does not depend on $s$. Therefore any solution of it still carries an $\mathbb{R}$-action. By using the Calabi-Yau condition, we can pick a regular family $J_{t}$ for $H_{t}$. For two different orbits $\gamma$ and $\gamma'$ with index $k$, any Floer solution of the continuation map equation connecting $\gamma$ and $\gamma'$ carries an $\mathbb{R}$-action. So the corresponding moduli space is at least one-dimensional, contradicting that both $\gamma$ and $\gamma'$ have index $k$.

Then the continuation maps only exist when $\gamma=\gamma'$ and it will be a constant map. One can directly check that the constant map is regular and has contribution one. Hence the continuation map between $CF(H^{0}_{t})$ and $CF(H^{1}_{t})$ is an identity matrix, weighted by the change of the Hamiltonian function which is $T^{\Delta}$.
\end{proof}

\subsection{Liouville domain and contact cylinder}
Let $(C, \alpha)$ be a contact manifold with a contact form $\alpha$. The \textit{Reeb vector field} of $\alpha$ is the unique vector field $R_{\alpha}$ on $C$ such that
$$
d\alpha(R_{\alpha}, \cdot)=0, \quad \alpha (R_{\alpha})=1.
$$
Then a \textit{Reeb orbit of length $\lambda >0$} is a map
$$
\gamma(t): \mathbb{R}/\lambda\mathbb{Z}\rightarrow C, \quad \frac{d}{dt}\gamma(t)=R_{\alpha}.
$$
We write $\Gamma_{\alpha, \lambda}\subset C$ the set formed by Reeb orbits of length $\lambda$ and $\Gamma_{\alpha}:= \cup_{\lambda >0}\Gamma_{\alpha, \lambda}$. We say a Reeb orbit is non-degenerate if the Poincar\'e return map of the Reeb flow does not have eigenvalue one. And we say a contact form $\alpha$ is non-degenerate if all of its orbits are non-degenerate. We say $\alpha$ is Morse-Bott non-degenerate if for all $\lambda>0$, the set $\Gamma_{\alpha, \lambda}$ is a closed submanifold in $C$, the rank of $d\alpha\mid_{\Gamma_{\alpha, \lambda}}$ is locally constant, and $T_{p}\Gamma_{\alpha, \lambda}=\ker (T_{p}\phi_{\lambda}-id)$ for all $p\in \Gamma_{\alpha, \lambda}$, where $\phi_{t}$ is the Reeb flow. For a Morse-Bott non-degenerate contact form, one can define a Conley-Zehnder index of its Reeb orbits, see Section 2.1 in \cite{CM} and references therein for more details on Reeb orbits and their indices. Then we have the following definition.

\begin{definition}\label{index}(Definition 1.12 \cite{TVar})
	Suppose $(M, \omega)$ is Calabi-Yau. A contact hypersurface $(C, \alpha)$ in $M$ is called
	index bounded if 
	\begin{enumerate}
		\item $\alpha$ is a non-degenerate contact form.
		\item All of its Reeb orbits are contractible inside $M$. 
		\item For any integer $k$, the lengths of the Reeb orbits of Conley-Zehnder index $k$ are bounded from above. 
	\end{enumerate}
	Similarly, we call a Liouville domain $(D, \theta)$ in $M$ index bounded if its boundary $(\partial D, \alpha=\theta\mid_{\partial D})$ is index bounded.
\end{definition}

Let $(D, \theta)$ be a Liouville domain in $(M, \omega)$. For small $\epsilon>0$ there is an embedding
$$
\bar{C}=[1-\epsilon, 1+\epsilon]\times \partial D\rightarrow M
$$
such that $\partial D$ is the image of $\lbrace 1\rbrace\times \partial D$. We do not distinguish $\bar{C}$ from its image. Then $\bar{C}$ is called a contact cylinder associated to the Liouville domain $D$ if $\omega\mid_{\bar{C}}=d(r\alpha)$ where $r$ is the coordinate on $[1-\epsilon, 1+\epsilon]$. And we will write $D_{1+\epsilon}=D\cup \bar{C}$ as a compact neighborhood of $D$. The one-form $\theta$ on $D$ smoothly extends to a one-form $\theta$ on $D_{1+\epsilon}$ such that $\omega\mid_{D_{1+\epsilon}}=d\theta$.

\subsection{Relative symplectic cohomology of a Liouville domain}
The relative symplectic cohomology is the homology of a suitable limit of complexes derived from Hamiltonian Floer theory of $M$. The whole construction \cite{Var} of relative symplectic cohomology defines a module $SH_{M}(D)$ over the universal Novikov ring $\Lambda_{0}$, for any compact subset $D$ of a closed symplectic manifold $M$. Now we briefly review its definition when $D$ is a convex domain in a symplectic Calabi-Yau manifold $M$.

The following data is called an \textit{acceleration data} for $D$:
\begin{enumerate}
\item $H_{1,t}\leq H_{2,t}\leq \cdots$ a monotone sequence of non-degenerate Hamiltonian functions, such that $H_{i,t}(x)\to 0$ on $D$ and $H_{i,t}(x)\to +\infty$ on $M-D$. 

\item Monotone homotopies of Hamiltonians $\lbrace H_{s,t}\rbrace_{s\in [i,i+1]}$ for all $i$, which mean that $H_{s,t}(x)\geq H_{s',t}(x)$ if $s\geq s'$ and $H_{s,t}=H_{i,t}$ if $s=i$.

\item A family of almost complex structures $\lbrace J_{s,t}\rbrace_{(s,t)\in [1,+\infty)\times S^{1}}$ such that for each $i$, $(H_{i,t}, J_{i,t})$ is a regular pair, and for each $i$, $(H_{s,t}, J_{s,t})_{s\in [i,i+1]}$ is a regular homotopy.
\end{enumerate}
From an acceleration data, we obtain a sequence of chain complexes over $\Lambda_{0}$:
$$
\mathcal{CF}^{k}=CF^{k}(H_{1,t})\rightarrow CF^{k}(H_{2,t})\rightarrow \cdots
$$
connected by continuation maps. Here each $CF^{k}(H_{i,t})$ is the degree-$k$ Floer complex of the Hamiltonian $H_{i,t}$. Since $H_{i,t}\leq H_{i+1,t}$ are connected by a monotone family of Hamiltonians, the weight (\ref{weight}) in the continuation map is non-negative.

Then the relative symplectic cohomology module $SH^{k}_{M}(D; \Lambda_{0})$ is defined as the cohomology
\begin{equation}\label{def1}
H(\widehat{tel}(\mathcal{CF}^{k}); \Lambda_{0})
\end{equation}
of the completion $\widehat{tel}(\mathcal{CF}^{k})$ of the telescope
$$
tel(\mathcal{CF}^{k})=\bigoplus_{n\in \mathbb{Z}_{+}}(C_{n}[1]\oplus C_{n}).
$$
Here we write $C_{n}=CF^{k}(H_{n,t})$. Another algebraic way to define it is that
\begin{equation}\label{def2}
\begin{aligned}
& SC^{k}_{M}(D; \Lambda_{0}):= \varprojlim_{r\geq 0} \varinjlim_{n}C_{n}\otimes_{\Lambda_{0}}\Lambda_{0}/\Lambda_{\geq r}\\
& SH^{k}_{M}(D; \Lambda_{0}):= H(SC^{k}_{M}(D); \Lambda_{0}).
\end{aligned}
\end{equation}
For the equivalence of these two definitions, see Section 2 in \cite{Var}. And it is also shown that the definition of $SH^{k}_{M}(D; \Lambda_{0})$ is independent of various choices.

\begin{proposition}(Proposition 3.3.4, \cite{Var})
\begin{enumerate}
\item Let $H_{s}$ and $H_{s}'$ be two different acceleration data, then $H(SC_{M}(D, H_{s}))\cong H(SC_{M}(D, H_{s}'))$ canonically. Therefore we simply denote this invariant by $SH_{M}(D)$.

\item Let $\phi: M\rightarrow M$ be a symplectomorphism. There exists a canonical isomorphism $SH_{M}(D)=SH_{M}(\phi(D))$ by relabeling an acceleration data by the map $\phi$.

\item For $D\subset D'$, there exists canonical restriction maps $SH_{M}(D')\rightarrow SH_{M}(D)$. This satisfies the presheaf property.
\end{enumerate}
\end{proposition}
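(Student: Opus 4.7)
The plan is to prove the three items separately, with item (1) carrying the main technical content which then feeds directly into items (2) and (3). The unifying principle is that continuation maps make any two cofinal acceleration systems compatible up to canonical chain homotopy once one passes to completed telescopes over $\Lambda_{0}$.

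For item (1), given two acceleration data $\{H_{i,t}\}$ and $\{H_{i,t}'\}$, I would first construct a third acceleration data $\{H_{i,t}''\}$ that dominates both along subsequences: pick increasing $\sigma,\sigma':\mathbb{Z}_{+}\to\mathbb{Z}_{+}$ and a cofinal $\{H_{i,t}''\}$ with $H_{i,t}''\geq H_{\sigma(i),t}$ and $H_{i,t}''\geq H_{\sigma'(i),t}'$ pointwise. The monotone homotopies assembling these inequalities induce continuation chain maps of one-rays that commute with the internal continuations up to chain homotopy, and because all the weights are nonnegative, the maps are defined over $\Lambda_{0}$ and descend to the completed telescopes. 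To check these descended maps are quasi-isomorphisms, I would reduce modulo $\Lambda_{\geq E}$ for each $E>0$: over $\Lambda_{E}$ only finitely many moduli spaces contribute, and the standard argument that the composition of forward and backward continuations is chain homotopic to the identity (using Lemma \ref{translation} for the constant-homotopy piece and a parametrized moduli space for the rest) yields the quasi-isomorphism over every $\Lambda_{E}$. Canonicality of the resulting isomorphism on $SH_{M}(D)$ is then established by a zigzag using a fourth acceleration data dominating $H_{s}''$, showing any two choices of comparison system give the same map on cohomology.

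For item (2), a symplectomorphism $\phi:M\to M$ induces a strict bijection between the one-periodic orbits of $H_{t}$ and those of $H_{t}\circ \phi^{-1}$, and pushforward of a compatible $J_{t}$ produces an $\omega$-compatible almost complex structure whose Floer equation corresponds covariantly; hence if $\{H_{i,t}\}$ is an acceleration data for $D$, then $\{H_{i,t}\circ \phi^{-1}\}$ is an acceleration data for $\phi(D)$, and the strict identification of chain complexes, continuations, and telescopes gives the canonical isomorphism. For item (3), when $D\subset D'$, any cofinal family $\{H_{j,t}'\}$ for $D'$ is eventually dominated on compact sets by any cofinal family $\{H_{i,t}\}$ for $D$ (via Lemma \ref{co}, since blowing up on $M\setminus D$ is strictly stronger than blowing up on $M\setminus D'$); choosing $\sigma:\mathbb{Z}_{+}\to\mathbb{Z}_{+}$ with $H_{\sigma(i),t}\geq H_{i,t}'$ pointwise and interpolating by monotone homotopies produces a chain map from the $D'$-telescope into the $D$-telescope, hence a cohomological map $SH_{M}(D')\to SH_{M}(D)$. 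Independence of the choices follows from item (1) applied on each side, and the presheaf property for $D\subset D'\subset D''$ follows by selecting the three cofinal families so that the composition of continuation homotopies is chain homotopic to a direct continuation homotopy.

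The main obstacle lies in the algebraic lemma undergirding item (1): one must know that a map of one-rays of $\Lambda_{0}$-chain complexes which is a quasi-isomorphism after reduction modulo $\Lambda_{\geq E}$ for every $E>0$ induces an isomorphism on completed telescope cohomology. This is where the careful treatment in Section 2 of \cite{Var} of the interplay between direct limits, Novikov completion, and cohomology (including an implicit $\varprojlim^{1}$-vanishing built into the telescope construction via the freeness of its summands) does the real work; without this algebraic input the geometric chain homotopies alone do not assemble into the required canonical isomorphism.
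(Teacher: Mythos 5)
The paper does not actually prove this proposition: it is imported verbatim from \cite{Var} (Proposition 3.3.4) with no argument supplied, so the comparison target is Varolgunes' original proof, not anything in the present text. Your overall plan matches what that argument must do: item (1) carries the real content and feeds (2) and (3); (2) is a strict relabeling of acceleration data, orbits, and almost complex structures under $\phi$; (3) uses that $C^{\infty}_{D'\times S^1}\subset C^{\infty}_{D\times S^1}$, so a cofinal family for $D$ eventually dominates one for $D'$, giving the restriction map via monotone continuations. You also correctly single out the essential algebraic lemma: a map of one-rays of free $\Lambda_0$-complexes that becomes a quasi-isomorphism after reduction modulo $\Lambda_{\geq E}$ for every $E$ induces an isomorphism on completed-telescope cohomology, the $\varprojlim^1$-vanishing coming from the degreewise freeness of the telescope.

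The genuine gap is your quasi-isomorphism step in (1): the claim that ``the composition of forward and backward continuations is chain homotopic to the identity.'' Over $\Lambda_0$ (or any $\Lambda_E$), there is no backward continuation map associated to a monotone one-ray. If $H_1\le H_2$ with $H_1\ne H_2$, a non-monotone homotopy from $H_2$ down to $H_1$ produces Floer solutions whose topological energy (\ref{weight}) can be negative, yielding terms $T^{\delta}$ with $\delta<0$ that do not live in $\Lambda_0$ or its quotients; such a map is simply not defined in the coefficient ring you are working over. The correct mechanism is interleaving: since both $\{H_{i,t}\}$ and $\{H'_{j,t}\}$ are cofinal in $C^{\infty}_{S^1\times D\subset S^1\times M}$, you can extract $H_{i_1}\le H'_{j_1}\le H_{i_2}\le H'_{j_2}\le\cdots$, and the composition of the two \emph{forward} continuations $CF(H_{i_k})\to CF(H'_{j_k})\to CF(H_{i_{k+1}})$ is chain homotopic (by a parametrized moduli space over a triangle of monotone homotopies) to the \emph{internal} continuation $CF(H_{i_k})\to CF(H_{i_{k+1}})$ -- not to the identity. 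The identification with the identity only appears after passing to direct limits and quotienting by $\Lambda_{\geq E}$, at which point the algebraic telescope lemmas take over. Relatedly, Lemma \ref{translation} is a constant-shift statement that gives multiplication by $T^{\Delta}$, not the identity unless $\Delta=0$; it underlies the sandwich trick of Proposition \ref{single}, which works over $\Lambda$ precisely because $T^{\Delta}$ is invertible there, and it plays no role in establishing the quasi-isomorphism of telescopes over $\Lambda_0$. Once you replace the forward--backward picture with the interleaving picture, the rest of your outline, including items (2) and (3) and the presheaf compatibility for $D\subset D'\subset D''$, is sound.
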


Hence we can just write this invariant as $SH_{M}(D):=SH_{M}(D; \Lambda_{0})$ and its torsion-free part $SH_{M}(D; \Lambda_{0})\otimes_{\Lambda_{0}}\Lambda$. This invariant has many good properties. Notably it satisfies the Mayer-Vietoris exact sequence in some settings. Another property we will keep using here is the \textit{stably displaceability condition}.

\begin{theorem}(Theorem 4.0.1 and Remark 4.3.1 \cite{Var})\label{t: stable}
If the compact subset $D\subset M$ is stably displaceable then $SH_{M}(D; \Lambda_{0})\otimes_{\Lambda_{0}}\Lambda =0$.
\end{theorem}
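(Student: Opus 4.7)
The plan is to adapt the classical Chekanov--Hofer displacement argument for Hamiltonian Floer cohomology to the telescope construction defining $SH_{M}(D)$. The target is to produce a positive exponent $e>0$ and show that multiplication by $T^{e}$ annihilates $SH_{M}(D;\Lambda_{0})$. Since $T^{e}$ becomes a unit after tensoring with $\Lambda$, this immediately yields $SH_{M}(D;\Lambda_{0})\otimes_{\Lambda_{0}}\Lambda=0$.

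First I would reduce the stably displaceable case to genuine Hamiltonian displaceability. By definition $D$ stably displaceable means that $D\times S^{1}\subset M\times T^{*}S^{1}$ admits a compactly supported displacing Hamiltonian. To stay inside the closed-ambient setup used in the excerpt, I would embed the bounded region supporting the displacement into a product $M\times\Sigma$ with $\Sigma$ a closed surface of sufficiently large area, extending the displacement by zero, and then invoke a K\"unneth-type property of relative symplectic cohomology (listed by Varolgunes as a known structural feature of $SH_{M}(D)$) to read off the vanishing of $SH_{M}(D;\Lambda_{0})\otimes_{\Lambda_{0}}\Lambda$ from that of $SH_{M\times\Sigma}(D\times S^{1};\Lambda_{0})\otimes_{\Lambda_{0}}\Lambda$. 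This leaves me with the plain displaceable case in a closed symplectic manifold.

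Granting displaceability, let $F_{t}$ be a time-dependent Hamiltonian on $M$ with $\phi_{F}^{1}(D)\cap D=\emptyset$ and Hofer norm $e=\norm{F}<\infty$. For each $H_{i}$ in a cofinal acceleration datum I would build, at the chain level, a homotopy between the identity of $CF^{*}(H_{i};\Lambda_{0})$ and an operator divisible by $T^{e}$, by running two continuation maps along different monotone paths (one through $H_{i}\# F_{t}$ and one directly), whose difference yields the required chain homotopy via the standard square-of-homotopies argument. The displacement property forces any Floer cylinder connecting orbits supported over $D$ to cross a region where $F$ is active, paying at least $e$ units of topological energy in the weight $(\ref{weight})$. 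Iterating this construction within the one-ray $\mathcal{CF}^{k}$ and passing to the completed telescope $\widehat{tel}(\mathcal{CF}^{k})$ then shows $T^{e}\cdot SH_{M}(D;\Lambda_{0})=0$, and tensoring with $\Lambda$ concludes.

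The principal obstacle will be controlling the topological energy jointly with the completion. One has to verify that the proposed chain homotopy is compatible, up to further chain homotopy, with the continuation maps $CF^{*}(H_{i})\to CF^{*}(H_{i+1})$, so that it genuinely descends to $\widehat{tel}(\mathcal{CF}^{k})$ and its cohomology; this requires uniform Gromov--Floer compactness estimates across the cofinal family, together with a careful argument that the residual contributions come from orbits whose action escapes to $+\infty$ and are therefore killed by the $\varprojlim_{r}$ in $(\ref{def2})$. The stable-to-displaceable reduction is the other delicate point, since it either forces the use of a K\"unneth property or an extension of the whole telescope construction to a convex-at-infinity ambient as in Appendix~A.
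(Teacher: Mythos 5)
The statement here is a quoted theorem from \cite{Var}, so there is no ``paper's own proof'' to compare against; what follows is an assessment of your proposal against what is known about Varolgunes' argument.

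Your core idea---exhibit a torsion exponent $e>0$ so that $T^{e}\cdot SH_{M}(D;\Lambda_{0})=0$, then tensor with $\Lambda$---is the right mechanism, and the Chekanov--Hofer-style chain homotopy built from two monotone continuation paths (one routed through the displacing Hamiltonian) is the correct tool for the genuinely displaceable case. You also correctly identify the delicate point: one must verify the compatibility of that homotopy with the continuation maps in the one-ray and with the double limit $\varprojlim_r \varinjlim_n$, so that it descends to the completed telescope.

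However, your reduction of the \emph{stable} case to the genuine case via embedding into $M\times\Sigma$ and K\"unneth is a genuinely different route from what Varolgunes does, and it has gaps as stated. First, to pass from $SH_{M\times\Sigma}(D\times S^{1};\Lambda_{0})\otimes_{\Lambda_{0}}\Lambda=0$ back to $SH_{M}(D;\Lambda_{0})\otimes_{\Lambda_{0}}\Lambda=0$ you need to know that $SH_{\Sigma}(S^{1};\Lambda_{0})\otimes_{\Lambda_{0}}\Lambda\neq 0$ for your chosen closed surface $\Sigma$ and embedded circle; this is plausible (take $\Sigma=T^{2}$ and a non-contractible $S^{1}$) but is nowhere established, and it is not a formal consequence of non-displaceability since the criterion runs only one way. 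Second, and more seriously, the K\"unneth formula for relative symplectic cohomology is stated with a \emph{completed} tensor product over $\Lambda_{0}$, and the functor $(-)\otimes_{\Lambda_{0}}\Lambda$ does not interchange naively with completion: in general $(A\,\widehat{\otimes}_{\Lambda_{0}}\,B)\otimes_{\Lambda_{0}}\Lambda$ need not agree with $(A\otimes_{\Lambda_{0}}\Lambda)\otimes_{\Lambda}(B\otimes_{\Lambda_{0}}\Lambda)$, so deducing vanishing of the first factor from vanishing of the completed product requires a separate argument. Varolgunes' proof avoids both issues by treating the displacing Hamiltonian on $M\times T^{*}S^{1}$ directly at the chain level---absorbing the stabilizing $T^{*}S^{1}$ factor into the construction of the nullhomotopy on the acceleration data for $D$---rather than by going through a product ambient and a K\"unneth isomorphism. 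If you want to keep your reduction strategy, you must (i) prove the requisite nonvanishing for $SH_{\Sigma}(S^{1})$, (ii) control the completed tensor product under $\otimes_{\Lambda_{0}}\Lambda$, and (iii) verify that the K\"unneth formula is available independently of the displaceability criterion to avoid circularity.
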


In practice when $D$ is a convex domain, with a non-degenerate contact form on its boundary, we will use a particular class of acceleration data to compute the relative symplectic cohomology. For small $\epsilon>0$ we fix a contact cylinder $\bar{C}$ associated to $D$ and write $D_{1+\epsilon}=D\cup \bar{C}$ as a compact neighborhood of $D$. We introduce the notion of $S$-shape Hamiltonian functions, see Figure 1.

\begin{figure}
  \begin{tikzpicture}[xscale=0.8, yscale=0.8]
  \draw [->] (-2,0)--(4.5,0);
  \node [right] at (4.5,0) {$r$};
  \draw (-1,-0.2) to [out=0, in=250] (1,1);
  \draw (1,1)--(2,4);
  \draw (2,4) to [out=70, in=180] (3, 5);
  \draw (-1,0.1)--(-1,-0.1);
  \node [above] at (-1,0.1) {\small{$1-\frac{\epsilon}{4}$}};
  \draw (0,0.1)--(0,-0.1);
  \node [above] at (0,0.1) {\small{$1$}};
  \draw (1,0.1)--(1,-0.1);
  \node [below] at (1,-0.1) {\small{$1+\frac{\epsilon}{4}$}};
  \draw (2,0.1)--(2,-0.1);
  \node [below] at (2,-0.1) {\small{$1+\frac{\epsilon}{2}$}};
  \draw (3,0.1)--(3,-0.1);
  \node [below] at (3,-0.1) {\small{$1+\frac{3\epsilon}{4}$}};
  \draw (4,0.1)--(4,-0.1);
  \node [below] at (4,-0.1) {\small{$1+\epsilon$}};
  \draw [snake it] (-2,-0.2)--(-1,-0.2);
  \draw [snake it] (3,5)--(4,5);
  \end{tikzpicture}
  \caption{Hamiltonian functions in the cylindrical coordinate.}\label{Ham}
\end{figure}
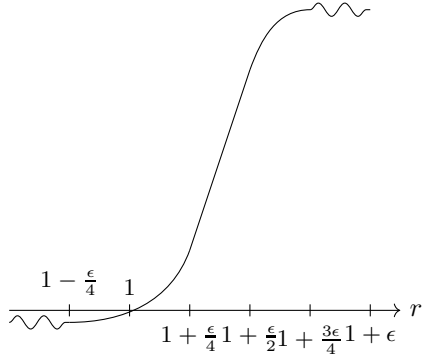

\begin{definition}
A time-independent Hamiltonian function $H: M\rightarrow \mathbb{R}$ is called an $S$-shape Hamiltonian if
\begin{enumerate}
\item $H$ is cylindrical on the region $[1-\frac{\epsilon}{4}, 1+\frac{3\epsilon}{4}]\times \partial D\subset \bar{C}$. That is, $H(x)= H(x')$ if $r(x)=r(x')$ where $r$ is the cylindrical coordinate. So we can write $H(x)=h(r(x))$ for some $h: [1-\frac{\epsilon}{4}, 1+\frac{3\epsilon}{4}]\rightarrow \mathbb{R}$ on the cylinder region;
\item $h'(r)$ is concave and $h(r)=\lambda r+ m$ on $[1+\frac{\epsilon}{4}, 1+\frac{\epsilon}{2}]\times \partial D\subset \bar{C}$ for some constants $\lambda>0$ and $m$;
\item the linear slope $\lambda$ is not in the action spectrum of the contact form;
\item $H$ is a $C^{2}$-small Morse function on $D_{1-\frac{\epsilon}{4}}$, and it is a Morse function on $M-D_{1+\frac{3\epsilon}{4}}$ with small derivatives, such that it only has constant orbits outside $[1-\frac{\epsilon}{4}, 1+\frac{3\epsilon}{4}]\times \partial D$.
\end{enumerate}
\end{definition}

By this special shape of our Hamiltonian functions, we have further description of their one-periodic orbits. First in the region where the Hamiltonians has small derivatives, there are only constant orbits. Hence all non-constant orbits lie in the contact cylinder $\bar{C}$. By a direct computation we have that
$$
X_{H_{n,t}}(r_{0}, c_{0})= -\partial_{r}H_{n,t}(r_{0})\cdot R_{\alpha}(c_{0}), \quad \forall (r_{0}, c_{0})\in \bar{C}=[1-\epsilon, 1+\epsilon]\times \partial D.
$$
That is, the Hamiltonian vector field is proportional to the Reeb vector field with the negative slope as the ratio. Hence any $\tau$-periodic Reeb orbit gives rise to a one-periodic Hamiltonian orbit in $r_{0}\times \partial D$ if and only if
$$
\tau=\abs{\int_{0}^{1} \partial_{r}H_{n,t}(r_{0})dt}.
$$
Since the Hamiltonian is linear in the middle of the cylindrical region, with a slope which is not in the action spectrum of the contact form, there are no orbits in this region. Hence all the non-constant one-periodic Hamiltonian orbits can be separated into two groups. One group is located in the region $[1-\frac{\epsilon}{4}, 1+\frac{\epsilon}{4}]\times \partial D$ and we call them \textit{lower} orbits. The other group is located in the region $[1+\frac{\epsilon}{2}, 1+\frac{3\epsilon}{4}]\times \partial D$ and we call them \textit{upper} orbits.

Previously, an $S$-shape Hamiltonian function is time-independent. So the non-constant orbits appear in $S^{1}$-families. Now we use small time-dependent perturbations to make them non-degenerate, by using the technique in \cite{CFHW}.

\begin{proposition}\label{p: perturbation}
(Lemma 2.1 and Proposition 2.2 in \cite{CFHW}) Let $H$ be a time-independent $S$-shape Hamiltonian function and let $\gamma$ be a non-constant one-periodic orbit of $H$ such that $\gamma$ is transversally non-degenerate. Pick $U$ to be a neighborhood of $\gamma$ which does not contain other one-periodic orbits, then there exists a time-dependent function $H_{t}$ such that
\begin{enumerate}
\item the support of $H- H_{t}$ is in $U$;
\item there are exactly two one-periodic orbits $\gamma_{\pm}$ of $H_{t}$ in $U$;
\item $\int \gamma^{*}\theta= \int \gamma_{+}^{*}\theta= \int \gamma_{-}^{*}\theta$, where $\theta$ is the Liouville one-form in the cylindrical region;
\item the difference between the Conley-Zehnder indices of $\gamma$ and $\gamma_{\pm}$ is bounded by one.
\end{enumerate}
\end{proposition}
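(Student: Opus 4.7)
The statement is the standard Morse-Bott perturbation result of \cite{CFHW}, so my plan follows that template. Because $H$ is time-independent, the circle of reparameterizations $\{\gamma(\cdot + c) : c \in S^{1}\}$ is a family of one-periodic orbits whose image is an embedded circle $\Sigma \subset U$, and the transverse non-degeneracy hypothesis is precisely the Morse-Bott condition along $\Sigma$. Shrinking $U$ if necessary, I would choose a tubular neighborhood $V \subset U$ of $\Sigma$ together with adapted coordinates $(\phi, y) \in S^{1} \times B^{2n-1}$ in which $\Sigma = \{y = 0\}$, the symplectic form is brought to a standard model, and the flow of $X_{H}$ on $\Sigma$ is $t \mapsto (\phi + t, 0)$. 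Existence of such coordinates is a Weinstein-type tubular neighborhood statement for Morse-Bott orbit families.

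Next I would fix a Morse function $f : S^{1} \to \mathbb{R}$ with exactly two critical points---a maximum $\phi_{+}$ and a minimum $\phi_{-}$---together with a smooth bump function $\rho : B^{2n-1} \to [0,1]$ satisfying $\rho \equiv 1$ near $y = 0$ and supported in $V$. Set
\[
H_{t}(\phi, y) := H(\phi, y) + \epsilon \, \rho(y) \, f(\phi - t)
\]
for sufficiently small $\epsilon > 0$, extended by $H$ outside $V$. Property (1) is immediate. Passing to the rotating frame $\phi' := \phi - t$ makes the perturbed system autonomous near $\Sigma$, and an implicit function / Lyapunov-Schmidt reduction identifies the one-periodic orbits of $H_{t}$ in $U$ bijectively with the critical points of $f$, producing the two orbits $\gamma_{\pm}$ which, to leading order, are the time-shifts $t \mapsto \gamma(t + \phi_{\pm})$. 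Combined with transverse Morse-Bott non-degeneracy in the $y$ directions, this yields (2).

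For (3), with a further adjustment of the perturbation one arranges that $\gamma_{\pm}$ lie exactly on $\Sigma$, so that they coincide with honest time-reparameterizations of $\gamma$ as loops in $\Sigma$; the integral of $\tilde{\theta}$ over a closed loop is invariant under reparameterization, giving the three equalities. For (4), the Robbin-Salamon / Morse-Bott Conley-Zehnder formula expresses $CZ(\gamma_{\pm})$ as the transverse Conley-Zehnder index of $\gamma$ plus a contribution of $0$ or $1$ coming from the Morse index of $f$ at $\phi_{\pm}$, while the Robbin-Salamon index assigned to the degenerate orbit $\gamma$ is a half-integer sitting between these two values, giving $|CZ(\gamma_{\pm}) - CZ(\gamma)| \leq 1$.

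The main technical obstacle is property (3). Achieving exact equality of Liouville integrals requires that the Hamiltonian vector field of the perturbation be tangent to $\Sigma$ along $\Sigma$, so that the zero section $\{y = 0\}$ remains invariant under the perturbed flow; arranging this requires either modifying the perturbation by a compensating correction supported in an arbitrarily small neighborhood of $\Sigma$ or tailoring the adapted coordinates so that the symplectic-dual of $d\phi$ is tangent to $\Sigma$. Carrying out this adjustment carefully, while preserving the non-degeneracy of $\gamma_{\pm}$ and the bijection with critical points of $f$, is the substance of Lemma 2.1 of \cite{CFHW}.
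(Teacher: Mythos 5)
Your proposal follows essentially the same route the paper takes: the paper states this proposition as a citation of \cite{CFHW}, but in Section~5.2 it gives a closely parallel argument for the Morse–Bott generalization, built on exactly your ingredients — a time-twisted Morse perturbation supported in a tubular neighborhood of the orbit circle, a Lyapunov–Schmidt/implicit-function reduction identifying the surviving orbits with critical points of a Morse function on $S^1$, and the Morse–Bott Conley–Zehnder index bound for~(4). In that sense the scaffolding you describe is the right one.

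Where you go astray is in calling property~(3) ``the main technical obstacle'' requiring a compensating correction or a tailored choice of coordinates. No such adjustment is needed, and recognizing this is the whole point of using the rotating perturbation $f(\phi-t)$. Along the loop $c_\pm(t):=(\phi_\pm+t,0)$, the differential of your perturbation $\epsilon\,\rho(y)\,f(\phi-t)$ vanishes \emph{identically}: the $\phi$-derivative is $\epsilon\,\rho(0)\,f'(\phi_\pm)=0$ because $\phi_\pm$ is a critical point of $f$, and the $y$-derivative is $\epsilon\,\rho'(0)\,f(\phi_\pm)=0$ because $\rho\equiv 1$ near $y=0$. Hence the perturbation's Hamiltonian vector field is zero along $c_\pm$, so $c_\pm$ is already an exact one-periodic orbit of $H_t$, and by the uniqueness in your own implicit-function step $\gamma_\pm=c_\pm$ on the nose — no ``further adjustment.'' In particular you do not need the perturbation's Hamiltonian vector field to be tangent to $\Sigma$ at every point of $\Sigma$; it only needs to vanish along the two specific loops, which criticality of $\phi_\pm$ together with $\rho'\equiv 0$ near $y=0$ delivers for free. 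Consequently $\int\gamma_\pm^*\tilde\theta=\int\gamma^*\tilde\theta$ is immediate (same geometric loops, up to rotation in the source circle). This is precisely the observation the paper makes for the general Morse--Bott case when it writes ``Geometrically these perturbed orbits are the same orbits which start at critical points of $g$. So the integrations of the Liouville one-form do not change.''
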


Note that the orbit $\gamma$ is both a Hamiltonian orbit of $H$ and a Reeb orbit of the contact form. One can compute its index in two ways. The following lemma relates these two indices.

\begin{lemma}\label{comparison}(Lemma 5.25, \cite{M2020})
Let $\bar{C}=[1-\epsilon, 1+\epsilon]\times C$ be an index bounded contact cylinder with cylindrical coordinate $r$ and associated contact form $\alpha$ and let $\pi :\bar{C}\rightarrow C$ be the natural projection map. Let $f: [1-\epsilon, 1+\epsilon]\rightarrow \mathbb{R}$ be a smooth function, $R_{\lambda, [-m,m]}$ be the set of Reeb orbits of length $\lambda$ and index in $[-m, m]$ and $O_{\lambda, [-m,m]}$ be the set of one-periodic orbits of $f(r)$ contained
in $\lbrace r=(f')^{-1}(\lambda)\rbrace$ of index in $[-m, m]$ which are null homologous in M. Then the map
$$
O_{\lambda, [-m,m]}\rightarrow R_{\lambda, [-m-\frac{1}{2},m+\frac{1}{2}]}
$$
sending $\gamma: \mathbb{R}/\mathbb{Z}\rightarrow \bar{C}$ to $\pi \circ \gamma \circ b_{\lambda}$ is well-defined where
$$
b_{\lambda}: [0,\lambda]\rightarrow [0,1], \quad b_{\lambda}(t):= t/\lambda, \quad \forall t\in [0,\lambda].
$$
\end{lemma}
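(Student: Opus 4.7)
The plan is to prove this by carefully comparing the linearized Hamiltonian flow of $f(r)$ along $\gamma$ with the linearized Reeb flow of $\alpha$ along the projected orbit, then extracting the shift in Conley--Zehnder indices from a splitting of the tangent bundle.

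First I would verify the map is well defined at the level of parametrized loops. If $\gamma \in O_{\lambda, [-m,m]}$ sits at level $r_0 = (f')^{-1}(\lambda)$, then as noted in the excerpt the Hamiltonian vector field on the cylinder is $X_{f(r)}(r,c) = -f'(r) R_\alpha(c)$, so $\gamma(t) = (r_0, \sigma(t))$ for a curve $\sigma: \mathbb{R}/\mathbb{Z}\to C$ with $\sigma'(t) = -\lambda R_\alpha(\sigma(t))$ (the sign is absorbed by the orientation convention and by the fact that $\gamma$ traverses the orbit once). Reparametrizing by $b_\lambda$ gives $\pi\circ\gamma\circ b_\lambda: [0,\lambda]\to C$ with unit Reeb speed, so the image lies in $\Gamma_{\alpha,\lambda}$. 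Null-homologous-ness of $\gamma$ in $M$ passes to $\pi\circ\gamma\circ b_\lambda$ because $\gamma$ is isotopic inside $\bar C$ to its Reeb projection, so the two loops are homologous in $M$; combined with $c_1(TM)=0$ this ensures both indices are well defined and independent of capping.

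Next I would carry out the index comparison using the symplectic splitting
\[
T_{\gamma(t)}\bar{C} \;=\; \bigl(\mathbb{R}\partial_r \oplus \mathbb{R}R_\alpha\bigr) \;\oplus\; \xi_{\pi(\gamma(t))},
\]
where $\xi = \ker\alpha$ and the two summands are $d(r\alpha)$-orthogonal. A direct Hamilton equation computation shows that, in a trivialization obtained by pushing forward along the Reeb flow, the linearized time-$1$ map $d\phi^1_{f(r)}$ preserves this splitting: on $\xi$ it agrees with the linearized Reeb flow along $\pi\circ\gamma\circ b_\lambda$ for time $\lambda$ (this is the standard fact that the Hamiltonian flow of a function of $r$ on a contact cylinder equals a Reeb flow up to a shear in the $(\partial_r, R_\alpha)$-plane), while on $\mathbb{R}\partial_r \oplus \mathbb{R}R_\alpha$ it is the unipotent shear
\[
\begin{pmatrix} 1 & 0 \\ -f''(r_0) & 1 \end{pmatrix},
\]
since the Reeb direction receives exactly the first-order correction $-f''(r_0)\,dr$ under the flow.

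Using the block-sum property of the Conley--Zehnder index and the grading $\mu = n + CZ$, I would then write
\[
\mu(\gamma) \;=\; CZ_\xi\!\bigl(\pi\circ\gamma\circ b_\lambda\bigr) \;+\; CZ_{\mathrm{shear}} \;+\; n,
\]
where $CZ_\xi$ is the transverse Conley--Zehnder index of the Reeb orbit and $CZ_{\mathrm{shear}}\in\{-\tfrac12, 0, +\tfrac12\}$ is the shear contribution determined by $\mathrm{sign}(f''(r_0))$ (evaluated via the small perturbation convention of \cite{CFHW}, which splits a Morse--Bott $S^1$-family of Reeb orbits into a pair of nondegenerate Hamiltonian orbits with indices differing by $1$). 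Combined with the grading convention for Reeb orbits, this yields the inequality $\lvert \mu(\gamma) - \mu(\pi\circ\gamma\circ b_\lambda)\rvert \le \tfrac12$, so any $\gamma\in O_{\lambda,[-m,m]}$ maps into $R_{\lambda,[-m-\frac12,m+\frac12]}$.

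The main obstacle is the bookkeeping of sign and grading conventions: the shear matrix above, the direction of traversal of $\gamma$ versus $\pi\circ\gamma\circ b_\lambda$, and the convention for the Conley--Zehnder index of a transversally degenerate (Morse--Bott) Reeb orbit must all be matched so that the shear contribution genuinely lands in $\{-\tfrac12,0,+\tfrac12\}$ rather than in $\{\pm 1\}$. Once this is pinned down, both the well-definedness and the index bound follow from the splitting argument above.
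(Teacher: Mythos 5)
The paper does not actually prove this lemma; it quotes it verbatim as Lemma 5.25 of McLean \cite{M2020} and uses it as a citation, so there is no in-paper proof to compare your argument against. Judged on its own, your plan is sound and is the standard way such an index comparison is done. The facts you invoke are all correct: $X_{f(r)} = -f'(r)R_\alpha$ on the cylinder (matching the paper's own remark), the linearized flow preserves the $d(r\alpha)$-orthogonal splitting $(\mathbb{R}\partial_r\oplus\mathbb{R}R_\alpha)\oplus\xi$, its restriction to $\xi$ is the linearized (time-reversed, reparametrized) Reeb flow, and the $(\partial_r,R_\alpha)$-block is exactly the shear $\begin{pmatrix}1 & 0 \\ -f''(r_0) & 1\end{pmatrix}$ --- I re-derived $d\phi^1(\partial_r)=\partial_r - f''(r_0)R_\alpha$ and $d\phi^1(R_\alpha)=R_\alpha$ and they agree with you. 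Combined with block-additivity of the Robbin--Salamon index and the fact that the Robbin--Salamon index of that shear path lies in $\{-\tfrac12,0,+\tfrac12\}$, this gives the asserted half-integer bound.

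Two points worth tightening when you write this up. First, the Robbin--Salamon value of the shear block is most cleanly obtained directly from the crossing form at $t=0$ of the path $t\mapsto\begin{pmatrix}1&0\\ -tf''(r_0)&1\end{pmatrix}$, which has signature $\pm\mathrm{sign}(f''(r_0))$ on its one-dimensional nondegenerate part; the appeal to the \cite{CFHW} perturbation is consistent with this (the two perturbed orbits will have integer indices straddling the half-integer value) but is logically downstream of the estimate you want, not its source. Second, "the sign is absorbed" is doing real work: with $X_{f(r)}=-f'(r)R_\alpha$ and $f'(r_0)=\lambda>0$, the loop $\pi\circ\gamma\circ b_\lambda$ satisfies $\frac{d}{dt}(\pi\circ\gamma\circ b_\lambda)=-R_\alpha$, i.e.\ it is the time-reversed Reeb orbit, and this orientation choice determines which sign of $f''(r_0)$ gives $+\tfrac12$ versus $-\tfrac12$. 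You should fix that convention explicitly, although it cannot affect the final inequality $|\mu(\gamma)-\mu(\pi\circ\gamma\circ b_\lambda)|\le\tfrac12$ since the shear contribution lands in $\{-\tfrac12,0,+\tfrac12\}$ either way.
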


Hence we can use the index bounded condition, which was previously defined for Reeb orbits, to the setting of Hamiltonian orbits. Let $D$ be a convex domain with an index bounded boundary in a Calabi-Yau manifold. We start with a time-independent $S$-shape Hamiltonian function and perturb it. Before perturbation, a non-constant orbit $\gamma$ satisfies the index-length relation in Definition \ref{index} since it comes from a Reeb orbit. After perturbation, by the above proposition and lemma, the index-length relation still holds for new orbits $\gamma_{\pm}$.

Now we say a time-dependent function $H_{t}$ is a time-dependent $S$-shape Hamiltonian function if it is a perturbation of a time-independent $S$-shape Hamiltonian function as in Proposition \ref{p: perturbation}. Note that we only perturb the region where non-constant orbits lie, so we can still talk about upper and lower orbits after the perturbation.

\begin{definition}
Let $D$ be a convex domain in a Calabi-Yau manifold $M$. A time-dependent $S$-shape Hamiltonian function $H_{t}$ is called index bounded if for any integer $k$, there exists a constant $\mu_{k}>0$ such that $\int \gamma^{*}\theta<\mu_{k}$ for all degree-$k$ one-periodic orbits of $H_{t}$.
\end{definition}

The above discussion says that if $D$ is a convex domain with an index bounded boundary, then we can always find time-dependent non-degenerate $S$-shape Hamiltonian functions which are index bounded. In practice we will use families of time-dependent non-degenerate $S$-shape Hamiltonian functions to compute $SH_{M}(D)$.

\subsection{Hamiltonian Floer theory on manifolds with convex boundary}
Now we review the construction of Hamiltonian Floer theory on convex manifolds, and fix our notation along the way. Symplectic cohomology was first introduced by Cieliebak-Floer-Hofer \cite{CFH} and Viterbo \cite{Vit2} in the exact setting, and by Ritter \cite{R2010} in the non-exact setting.

Let $(M, \omega= d\theta)$ be a Liouville domain, and let $\alpha :=\theta\mid_{\partial M}$ be the contact form. Then we can attach a cylindrical end to $M$ to get an open symplectic manifold
$$
\hat{M}:= M \cup (\partial M\times [1, +\infty)).
$$
Let $r$ be the coordinate on $[1, +\infty)$. We equip the manifold $\hat{M}$ with a smooth symplectic form $\hat{\omega}$, where $\hat{\omega}= \omega$ on $M$ and $\hat{\omega}=d(r\alpha)$ on $\partial M\times [1, +\infty)$. And $(\hat{M}, \hat{\omega})$ will be called the \textit{completion} of $M$. In the following we assume that $\alpha$ on $\partial M$ is non-degenerate.

Now we define \textit{admissible} Hamiltonian functions we will use. A Hamiltonian function $H_{t}: S^{1}\times \hat{M}\rightarrow \mathbb{R}$ is called admissible if
\begin{enumerate}
\item it is a negative time-independent Morse function on $M$.
\item all its contractible one-periodic orbits in $\hat{M}$ are non-degenerate.
\item it is a linear function just depending on $r$ with a positive slope on $\partial M\times [R_{0}, +\infty)$ for some $R_{0}>1$.
\item the slope of the linear part is not an element in $Spec(\alpha)$.
\end{enumerate}

For an admissible Hamiltonian function, there are only finitely many one-periodic orbits. Next with an admissible Hamiltonian $H_{t}$, we consider the degree-$k$ Floer complex $CF^{k}(H_{t})$ as in (\ref{complex}) and (\ref{complex+}). Then for suitably chosen almost complex structures, we can use moduli spaces of Floer solutions to define differentials and continuation maps as in the closed case.

For a monotone family $H_{i,t}$ such that $H_{i,t}\leq H_{i+1,t}$ and the linear slope of $H_{i,t}$ goes to positive infinity, we have a sequence of complexes
$$
\mathcal{CF}^{k}=CF^{k}(H_{1,t})\rightarrow CF^{k}(H_{2,t})\rightarrow \cdots
$$
connected by continuation maps. Here all Floer differentials and continuation maps are weighted by the topological energy, see (\ref{weight}). Since the symplectic form on our Liouville domain is exact, the images of an orbit under Floer differentials and continuation maps are finite sums of other orbits. Hence we can both define the classical symplectic cohomology over $\mathbb{C}$ or over $\Lambda_{0}$. The former theory can be defined by the later one by setting $T=1$.

The classical symplectic cohomology of $M$ over $\Lambda_{0}$ is defined as
$$
SH^{k}(M; \Lambda_{0}):= H(tel(\mathcal{CF}^{k})).
$$
An essential difference between this definition and that of the relative symplectic cohomology is that the classical one does not complete $tel(\mathcal{CF}^{k})$ before taking homology. 

The classical symplectic cohomology of $M$ over $\mathbb{C}$ is defined as
$$
SH^{k}(M; \mathbb{C}):= H(tel(\mathcal{CF}^{k})\mid_{T=1}).
$$
In other words, all differentials and continuation maps are defined without weights. Since $(M, d\theta)$ is an exact symplectic manifold, this reduction to $T=1$ is well-defined.

\subsection{Lower semi-continuous Hamiltonian functions}
Defining the classical $SH^{k}(M; \mathbb{C})$ via taking a direct limit is equivalent to the definition via a single Hamiltonian function that are quadratic at infinity, see Section 3 in \cite{Se1}. Similarly, we will see that the relative symplectic cohomology $SH_{M}(D)$ is related to a kind of Hamiltonian Floer theory for a lower semi-continuous Hamiltonian function $F^{D}_{0}$, where $F^{D}_{0}\mid_{D}=0$ and $F^{D}_{0}\mid_{M-D}=+\infty$. A general study of Hamiltonian Floer theory of lower semi-continuous functions can be found in \cite{Gro} and \cite{M2020}. Now we will discuss a special case of it for our purpose.

Fix a closed symplectic Calabi-Yau manifold $M$ and let $F: M\times S^{1}\rightarrow \mathbb{R}$ be a lower semi-continuous function. Pick a monotone sequence $\lbrace H_{n,t}\rbrace$ of non-degenerate Hamiltonian functions such that
$$
H_{1,t}\leq H_{2,t}\leq \cdots \leq H_{n,t} \leq \cdots \rightarrow F.
$$
The Hamiltonian Floer cohomology $HF(F)$ is defined as
\begin{equation}
H(\widehat{tel}(CF(H_{1,t})\rightarrow CF(H_{2,t})\rightarrow \cdots)).
\end{equation}
Also we have an equivalent definition in terms of (\ref{def2}).

From its definition, we can see that the lower semi-continuous Hamiltonian Floer cohomology $HF(F)$ is a generalization of the relative symplectic cohomology. For a domain $D$ in $M$, we have that
$$
SH_{M}(D)\cong HF(F^{D}_{0}).
$$

This indicator-type function $F^{D}_{0}$ is a very degenerate one, since it is identically zero on $D$. In practice, it is often more handy to replace its part on $D$ by a fixed non-degenerate Hamiltonian function $f$. And our main result of this subsection is the following isomorphism, which shows that the choice $f$ does not matter if we work over the Novikov field.

\begin{proposition}\label{single}
Let $D$ be a domain in $M$. Let $F: M\times S^{1}\rightarrow \mathbb{R}$ be a lower semi-continuous function such that $F$ is a smooth negative non-degenerate Hamiltonian function $f$ on $D\times S^{1}$ and $F=+\infty$ otherwise. Then we have that
$$
SH_{M}(D)\otimes_{\Lambda_{0}}\Lambda \cong HF(F)\otimes_{\Lambda_{0}}\Lambda.
$$
\end{proposition}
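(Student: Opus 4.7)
The plan is to compare the two lower semi-continuous Floer theories via a pair of mutually inverse chain-level maps built by adding a uniformly bounded, compactly supported perturbation to a cofinal family, and then to invoke Lemma \ref{translation} to identify the resulting compositions with the identity.

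First I would fix a monotone cofinal family $\{H_{i,t}\}$ for $F^D_{0}$, so $H_{i,t}\to 0$ on $D\times S^{1}$ and $H_{i,t}\to+\infty$ on the complement; by definition this acceleration data computes $SH_{M}(D)$. Extend $f$ smoothly to $\tilde f$ on $M\times S^{1}$ and pick a smooth cutoff $\chi$ supported in a slight neighborhood of $D$ with $\chi\equiv 1$ on $D$. Set $G_{i,t}:=H_{i,t}+\chi\tilde f$, perturbed if necessary to be non-degenerate. Then $G_{i,t}\to f$ on $D$ and $G_{i,t}\to+\infty$ otherwise, so $\{G_{i,t}\}$ is a cofinal family for $F$ and computes $HF(F)$, and the two families satisfy $|G_{i,t}-H_{i,t}|\leq C$ everywhere for a constant $C$ independent of $i$.

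Next, I would construct continuation maps $\phi_{i}:CF(H_{i,t})\to CF(G_{i,t})$ and $\psi_{i}:CF(G_{i,t})\to CF(H_{i,t})$ from the linear homotopy $s\mapsto H_{i,t}+s\chi\tilde f$ and its reverse. This homotopy is not monotone in $s$, but its $s$-derivative has absolute value bounded by $C$, so by (\ref{weight}) the topological energy of any continuation solution is bounded below by $-C$. Hence the matrix entries of $\phi_{i}$ and $\psi_{i}$ lie in a shift $T^{-C}\Lambda_{0}\subset\Lambda$, making both well-defined $\Lambda$-linear chain maps once everything is tensored with $\Lambda$. A parametric version of the same energy estimate, applied to homotopies of homotopies, shows that $\phi_{i}$ and $\psi_{i}$ commute up to chain homotopy with the continuation maps inside the two acceleration data, so they assemble into morphisms $\Phi$ and $\Psi$ between the completed telescopes, after tensoring with $\Lambda$.

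To conclude, I would show $\psi_{i}\circ\phi_{i}\simeq\mathrm{id}$ and $\phi_{i}\circ\psi_{i}\simeq\mathrm{id}$. Concatenating the linear homotopy from $H_{i,t}$ to $G_{i,t}$ with its reverse and then contracting this concatenation to the constant homotopy at $H_{i,t}$ via a homotopy of homotopies produces, by a standard count of index $-1$ breaking in the two-parameter moduli, a chain homotopy between $\psi_{i}\circ\phi_{i}$ and the continuation map of the constant homotopy at $H_{i,t}$; by Lemma \ref{translation} (applied with $\Delta=0$) the latter is the identity. The symmetric construction handles $\phi_{i}\circ\psi_{i}$. Passing to the limit over $i$ yields $\Psi\circ\Phi=\mathrm{id}$ and $\Phi\circ\Psi=\mathrm{id}$ on the $\Lambda$-tensored invariants, establishing the proposition. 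The main obstacle is exactly the non-monotonicity of the interpolation: adding $\chi\tilde f$ produces continuation maps with negative topological energy, forcing the comparison to be made over $\Lambda$ rather than $\Lambda_{0}$. Ensuring the lower bound $-C$ on these energies is uniform in $i$, so that $\Phi$ and $\Psi$ descend to the completed limits, is the technical heart of the argument.
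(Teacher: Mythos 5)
Your approach is genuinely different from the paper's, and while the underlying idea is sound, it carries real technical burdens that the paper's argument is specifically designed to avoid.

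The paper never uses a non-monotone homotopy. Instead it \emph{sandwiches}: it picks a cofinal family $\{K_{n,t}\}$ for $F$ equal to $f$ on $D$, arranges $H_{n,t}\geq K_{n,t}$ with $\max_M(H_{n,t}-K_{n,t})$ attained on $D$, and then shifts by a constant $\Delta:=-\min_{D\times S^1}K_{1,t}$ to get $K_{n,t}+\Delta\geq H_{n,t}$ \emph{globally}. Every continuation map in the chain
\[
HF(F)\rightarrow HF(F^D_0)\rightarrow HF(F+\Delta)\rightarrow HF(F^D_0+\Delta')
\]
is therefore monotone, hence $\Lambda_0$-linear with non-negative topological energy, and automatically compatible with the $\Lambda_0$-adic completion. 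Lemma \ref{translation} then identifies the compositions with multiplication by $T^\Delta$ and $T^{\Delta'}$, which become isomorphisms upon tensoring with $\Lambda$. Tensoring with $\Lambda$ is the only place where it is used at all.

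Your route introduces the non-monotone interpolation $s\mapsto H_{i,t}+s\chi\tilde f$, whose continuation maps have topological energy bounded below only by $-C$. Two points in your sketch are glossed over in a way that matters. First, a map with matrix entries in $T^{-C}\Lambda_0$ does not carry the $\Lambda_0$-completion of the telescope to the $\Lambda_0$-completion; it carries it into $T^{-C}$ times the target completion. Passing to $\otimes_{\Lambda_0}\Lambda$ fixes this, but you must also verify that the same uniform bound $-C$ applies to all the chain homotopies (the ones witnessing compatibility of $\phi_i$ with the internal continuation maps of the one-rays, and the two-parameter homotopies giving $\psi_i\circ\phi_i\simeq\mathrm{id}$), since these higher data are what actually assemble the $\phi_i$ into a morphism of completed telescopes — the telescope is a mapping cone, and a morphism of telescopes requires the chain homotopies as part of the data, not just the level maps. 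Second, you must ensure these homotopies also extend continuously to the completions; this is not automatic and is exactly the kind of bookkeeping the paper's sandwich cleanly sidesteps by keeping every continuation map monotone. None of this makes your approach wrong, but the key observation you are missing is that shifting by a constant converts the non-monotone comparison into a pair of monotone ones, at which point Lemma \ref{translation} finishes the argument with far less moduli-theoretic input.
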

\begin{proof}
Pick a cofinal family $\lbrace H_{n,t}\rbrace$ of non-degenerate Hamiltonian functions such that
$$
H_{1,t}\leq H_{2,t}\leq \cdots \leq H_{n,t} \leq \cdots \rightarrow F^{D}_{0}
$$
and a cofinal family $\lbrace K_{n,t}\rbrace$ of non-degenerate Hamiltonian functions such that
$$
K_{1,t}\leq K_{2,t}\leq \cdots \leq K_{n,t} \leq \cdots \rightarrow F.
$$
By the non-degeneracy of $f$ in $D\times S^{1}$, we can choose $K_{n,t}$ such that $K_{n,t}(x)=f(x,t)$ for all $n\in\mathbb{N}, (x,t)\in D\times S^{1}$. Moreover, since $F$ is negative on $D$ we can choose above families such that $H_{n,t}\geq K_{n,t}$ for all $n$. Moreover, we can assume that
$$
\max_{(x,t)\in M\times S^{1}}(H_{n,t}- K_{n,t})= \max_{(x,t)\in D\times S^{1}}(H_{n,t}- K_{n,t}), \quad \forall n\in\mathbb{N}.
$$
Consider the continuation map
$$
CF(K_{i,t})\rightarrow CF(H_{i,t})
$$
over $\Lambda_{0}$ for each $i$. These continuation maps induce a $\Lambda_{0}$-module map
$$
HF(F)\rightarrow HF(F^{D}_{0})\cong SH_{M}(D).
$$

Next we set $\Delta:= -\min_{(x,t)\in D\times S^{1}} K_{1,t}$, and set $F+\Delta$ to be the lower semi-continuous function such that it is $f+\Delta$ on $D\times S^{1}$ and positive infinity otherwise. Note that the family $\lbrace K_{n,t}+ \Delta\rbrace$ is a cofinal family for the function $F+\Delta$, so on $D\times S^{1}$ we have that $K_{n,t}+\Delta \geq K_{1,t}+\Delta \geq 0\geq H_{n,t}$ for all $n$. On the other hand, since $H_{n,t}- K_{n,t}$ takes maximum on $D\times S^{1}$, we have $K_{n,t}+\Delta \geq H_{n,t}$ globally on $M\times S^{1}$, see Figure 2. This gives the second $\Lambda_{0}$-module map in the following
$$
HF(F)\rightarrow HF(F^{D}_{0}) \rightarrow HF(F+\Delta) \rightarrow HF(F^{D}_{0}+\Delta').
$$
Similarly we can find some other constant $\Delta'$ to define the third map. (For example, we can take $\Delta'= \Delta$. What we need is that $H_{n,t}+\Delta' \geq K_{n,t}+\Delta$ for all $n$.) Since the data to define $HF(F+\Delta)$ is just a translation of the data to define $HF(F)$, the composition of the first two maps is the multiplication by $T^{\Delta}$, by Lemma \ref{translation}. By the same reason, the composition of the last two maps is the multiplication by $T^{\Delta'}$. After tensoring with the Novikov field, these compositions become isomorphisms, which show that the three $\Lambda_{0}$-modules $HF(F),HF(F^{D}_{0})$ and $HF(F+\Delta)$ are all isomorphic over $\Lambda$.
\end{proof}

\begin{figure}
\begin{tikzpicture}[xscale=0.6, yscale=0.6]
  \draw [->] (0,0)--(12,0);
  \draw [snake it] (0,-1)--(4,-1);
  \draw (4,-1) to [out=0, in=225] (5,0);
  \draw (5,0)--(7,3);
  \draw (7,3) to [out=45, in=180] (8,4);
  \draw [snake it] (8,4)--(10,4);
  \node at (11, 4) {$K_{n,t}$};

  \draw [snake it] (0,1)--(4,1);
  \draw (4,1) to [out=0, in=225] (5,2);
  \draw (5,2)--(7,5);
  \draw (7,5) to [out=45, in=180] (8,6);
  \draw [snake it] (8,6)--(10,6);
  \node at (11.5, 6) {$K_{n,t}+\Delta$};

  \draw [blue, snake it] (0,-0.5)--(4,-0.5);
  \draw [blue] (4,-0.5) to [out=0, in=225] (5,0.5);
  \draw [blue] (5,0.5)--(7,3.5);
  \draw [blue] (7,3.5) to [out=45, in=180] (8,4.5);
  \draw [blue, snake it] (8,4.5)--(10,4.5);
  \node [blue] at (11, 4.5) {$H_{n,t}$};

\end{tikzpicture}
\caption{A sandwich of Hamiltonian functions}
\end{figure}
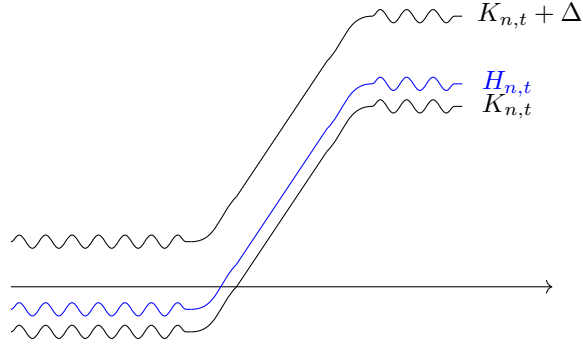

Therefore if we only care about the relative symplectic cohomology over the Novikov field, we can relax the condition of the acceleration data of Hamiltonian functions we used for computations. That is, the Hamiltonian functions converge to a fixed negative Morse function on $D$, instead of converging to zero.

\section{A filtration on the completed telescope}\label{sec:fil}
Let $(M, \omega)$ be a closed Calabi-Yau symplectic manifold and let $D\subset M$ be a Liouville domain. Recall that the relative symplectic cohomology $SH_{M}(D)$ is the homology of the completion $\widehat{tel}(\mathcal{CF})$ of the telescope
$$
tel(\mathcal{CF})=\bigoplus_{i\in \mathbb{N}}(C_{i}[1]\oplus C_{i}).
$$
Now we will define a filtration on $\widehat{tel}(\mathcal{CF})$ which is compatible with the differentials.

\subsection{Auxiliary symplectic forms}
First we define some auxiliary symplectic forms with respect to our Liouville domain $D$. The boundary $\partial D$ is a contact hypersurface in $M$ and for small $\epsilon >0$ we write $D_{1+\epsilon}=D\cup ([1, 1+\epsilon]\times\partial D )$ as a compact neighborhood of $D$. That is, we fix a choice of a contact cylinder associated to $D$. On $D_{1+\epsilon}$ there is a 1-form $\theta$ such that $d\theta =\omega$. Then we can interpolate $\omega$ from $D_{1+\frac{3}{4}\epsilon}$ to $M-D_{1+\epsilon}$.

\begin{lemma}\label{aux}
There exists a global 2-form $\tilde{\omega}$ and a 1-form $\tilde{\theta}$ on $M$ such that
\begin{enumerate}
\item $\omega=\tilde{\omega}+ d\tilde{\theta}$ on $M$;
\item The support of $\tilde{\theta}$ is in $D_{1+\epsilon}$ and $\tilde{\theta}=\theta$ in $D_{1+\frac{3}{4}\epsilon}$;
\item The support of $\tilde{\omega}$ is in $M-D_{1+\frac{3}{4}\epsilon}$ and $\tilde{\omega}=\omega$ in $M-D_{1+\epsilon}$.
\end{enumerate}
\end{lemma}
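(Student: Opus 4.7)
The plan is to construct $\tilde\theta$ first by cutting off the given primitive $\theta$ with a bump function, and then to define $\tilde\omega:=\omega-d\tilde\theta$, which automatically gives property (1); the remaining work is just to verify that the cutoff is arranged so that (2) and (3) hold on the nose, not just up to closure.

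Concretely, I would choose a smooth cutoff function $\rho:M\to[0,1]$ such that $\rho\equiv 1$ on $D_{1+\frac{3}{4}\epsilon}$ and $\rho\equiv 0$ on $M\setminus D_{1+\frac{7}{8}\epsilon}$ (any interpolating radius strictly between $\frac{3}{4}\epsilon$ and $\epsilon$ works; the purpose of keeping some slack below $1+\epsilon$ is to make the extension-by-zero smooth). Since $\theta$ is defined on $D_{1+\epsilon}$, the product $\rho\cdot\theta$ extends smoothly by zero to all of $M$; define $\tilde\theta:=\rho\,\theta$ on $D_{1+\epsilon}$ and $\tilde\theta:=0$ outside. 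Then $\tilde\theta$ is a global smooth $1$-form, identically equal to $\theta$ on $D_{1+\frac{3}{4}\epsilon}$, and with support contained in $D_{1+\frac{7}{8}\epsilon}\subset D_{1+\epsilon}$, which is exactly property (2).

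Now set $\tilde\omega:=\omega-d\tilde\theta$. Property (1) holds by definition. On $D_{1+\frac{3}{4}\epsilon}$ we have $\tilde\theta=\theta$, so $d\tilde\theta=d\theta=\omega$ and hence $\tilde\omega=0$ there; on $M\setminus D_{1+\frac{7}{8}\epsilon}$ we have $\tilde\theta=0$, so $d\tilde\theta=0$ and $\tilde\omega=\omega$. In particular, the support of $\tilde\omega$ is contained in the closure of $M\setminus D_{1+\frac{3}{4}\epsilon}$, and $\tilde\omega$ agrees with $\omega$ on $M\setminus D_{1+\epsilon}$, giving property (3).

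There is essentially no obstacle: the only thing to watch is that one must pick the cutoff to turn off strictly before reaching $\partial D_{1+\epsilon}$, so that the piecewise definition of $\tilde\theta$ glues to a smooth form; this is why I introduced the intermediate radius $1+\tfrac{7}{8}\epsilon$. Everything else is immediate from $d\theta=\omega$ on $D_{1+\epsilon}$.
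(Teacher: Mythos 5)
Your proof is correct and is essentially the same as the paper's: both cut off the primitive $\theta$ by a bump function in the collar $[1+\tfrac34\epsilon,1+\epsilon]$ to get $\tilde\theta$ and then take $\tilde\omega=\omega-d\tilde\theta$ (the paper writes $\tilde\omega=d(\rho\theta)$ with the complementary cutoff $\rho=1-\rho_{\text{yours}}$, which is the same form). The only difference is cosmetic — you turn the cutoff off at $1+\tfrac78\epsilon$ instead of exactly at $1+\epsilon$, which makes the smooth extension by zero slightly more transparent but gives nothing new.
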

\begin{proof}
Let $\rho(r): [1,2]\rightarrow \mathbb{R}$ be a smooth increasing function such that
$$
\rho(r)= \left\lbrace
\begin{aligned}
& 0, \quad 1\leq r\leq 1+\frac{3}{4}\epsilon;\\
& 1, \quad 1+\epsilon \leq r.
\end{aligned}
\right.
$$
Next we define that
$$
\tilde{\omega}\mid_{x}= \left\lbrace
\begin{aligned}
& 0, \quad x\in D_{1+\frac{3}{4}\epsilon};\\
& d(\rho(r)\theta), \quad x\in D_{1+\epsilon}-D_{1+\frac{3}{4}\epsilon};\\
& \omega, \quad x\in M-D_{1+\epsilon}
\end{aligned}
\right.
$$
and
$$
\tilde{\theta}\mid_{x}= \left\lbrace
\begin{aligned}
& \theta, \quad x\in D_{1+\frac{3}{4}\epsilon};\\
& (1-\rho(r))\theta, \quad x\in D_{1+\epsilon}-D_{1+\frac{3}{4}\epsilon};\\
& 0, \quad x\in M-D_{1+\epsilon}
\end{aligned}
\right.
$$
Then we can check that $\tilde{\omega}$ and $\tilde{\theta}$ satisfy the conditions we need, see Figure 3.
\end{proof}

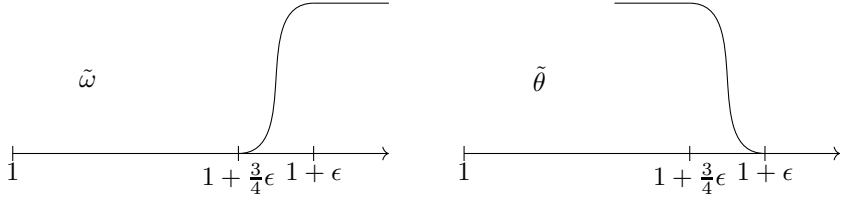
\begin{figure}
  \begin{tikzpicture}
  \node at (2,1) {$\tilde{\omega}$};
  \draw [->] (1,0)--(6,0);
  \draw (4,0) to [out=0, in=180] (5,2);
  \draw (5,2)--(6,2);
  \draw (1,0.1)--(1,-0.1);
  \node [below] at (1,0) {$1$};
  \draw (4,0.1)--(4,-0.1);
  \node [below] at (4,0) {$1+\frac{3}{4}\epsilon$};
  \draw (5,0.1)--(5,-0.1);
  \node [below] at (5,0) {$1+\epsilon$};
  \node at (8,1) {$\tilde{\theta}$};
  \draw [->] (7,0)--(12,0);
  \draw (10,2) to [out=0, in=180] (11,0);
  \draw (9,2)--(10,2);
  \draw (7,0.1)--(7,-0.1);
  \node [below] at (7,0) {$1$};
  \draw (10,0.1)--(10,-0.1);
  \node [below] at (10,0) {$1+\frac{3}{4}\epsilon$};
  \draw (11,0.1)--(11,-0.1);
  \node [below] at (11,0) {$1+\epsilon$};
  \end{tikzpicture}
  \caption{Cut-off functions for $\tilde{\omega}$ and $\tilde{\theta}$.}
\end{figure}

By the definition of $\tilde{\omega}$, it represents a cohomology class $[\tilde{\omega}]\in H^{2}(M,D)$. The following exact sequence of de Rham cohomology
$$
\cdots \rightarrow H^{1}(D)\rightarrow H^{2}(M,D)\xrightarrow{j} H^{2}(M)\rightarrow H^{2}(D)\rightarrow \cdots
$$
says that $j([\tilde{\omega}])=[\omega]$. So we also call $[\tilde{\omega}]$ a lift of $[\omega]$. In the case that $[\tilde{\omega}]$ takes integral values on $H_{2}(M, D; \mathbb{Z})$, we say $[\tilde{\omega}]$ is an integral lift of $[\omega]$. From now on, our Liouville domain $D$ is always equipped with such an auxiliary form $\tilde{\omega}$ and we assume it represents an integral class.

\begin{example}
	If $D$ is simply-connected, then $H^{1}(D)=0$ and we have a unique lift $[\tilde{\omega}]$ of $[\omega]$. Moreover, the map $H_{2}(M)\to H_{2}(M, D)$ is surjective. Hence if $[\omega]$ represents an integral class in $H^{2}(M)$ then $[\tilde{\omega}]$ is automatically integral. Similar conclusion holds when the map $\pi_{1}(D)\to \pi_{1}(M)$ is injective.
\end{example}

For a Hamiltonian vector field which is small on $M-D_{1+\frac{3\epsilon}{4}}$, the Floer equation becomes very close to the genuine Cauchy-Riemann equation on that region. This implies the positivity of the $\omega$-energy of solutions on that region. To prove this first we recall the following computation for a solution to the Floer equation. Let $H$ be a Hamiltonian function and $J$ be a compatible almost complex structure. Let $g(\cdot, \cdot):= \omega(\cdot, J\cdot)$ be the induced Riemannian metric. For a solution $u$ with finite energy we have that
$$
\omega([u])= \int u^{*}\omega= \int\omega(\partial_{s}u, \partial_{t}u)=\int \abs{\partial_{s}u}_{g}^{2}+ \int \omega(\partial_{s}u, X_{H}).
$$
For the second term we have that
$$
\begin{aligned}
&\int \omega(\partial_{s}u, X_{H})= -\int dH(\partial_{s}u)\\
&=-\int_{-\infty}^{+\infty} \frac{\partial}{\partial s}\int_{0}^{1}H(u) dtds= -(\int_{0}^{1}H(u(+\infty, t)) dt- \int_{0}^{1}H(u(-\infty, t)) dt)\\
&\geq -\norm{H}
\end{aligned}
$$
where $\norm{H}= \int_{0}^{1} (\max_{x\in M}H(x, t)- \min_{x\in M}H(x, t))dt$ is the Hofer norm of $H$. When we restrict our function to some region of $M$, the relative Hofer norm is defined in a similar way by taking max and min on that region.

\begin{lemma}\label{pos}
Let $(M, \omega)$ be a closed symplectic manifold and $D$ be a Liouville domain in $M$. Let $\tilde{\omega}$ be an auxiliary form constructed above. We assume that $[\tilde{\omega}]$ is an integral lift of $[\omega]$. Then for any non-degenerate Hamiltonian function $H$ with $\norm{H}_{M-D_{1+\frac{3\epsilon}{4}}}< 1$, for any finite-energy solution $u:S^{1}\times \mathbb{R}\rightarrow M$ of the Floer equation
$$
\partial_{s}u+ J(\partial_{t}u- X_{H})=0,
$$
the integral $\int u^{*}\tilde{\omega}\geq 0$. Here $J$ is a cylindrical almost complex structure compatible with $D$.
\end{lemma}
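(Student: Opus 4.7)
The plan is to combine three ingredients: tameness of $\tilde\omega$ with respect to the cylindrical almost complex structure $J$, the Floer energy identity, and the integrality of $[\tilde\omega]\in H^2(M,D;\mathbb{Z})$.

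First I would check that $\tilde\omega(v,Jv)\geq 0$ for every tangent vector $v$. This is trivial on $D_{1+3\epsilon/4}$ (where $\tilde\omega=0$) and on $M-D_{1+\epsilon}$ (where $\tilde\omega=\omega$ and $J$ is $\omega$-compatible). On the transition annulus $[1+\tfrac{3\epsilon}{4}, 1+\epsilon]\times\partial D$, the explicit formula $\tilde\omega = d(\rho(r)\, r\,\alpha) = (\rho r)'\,dr\wedge \alpha + \rho r\, d\alpha$ combined with the cylindrical structure of $J$ (sending $r\partial_r\mapsto R_\alpha$ and preserving $\xi=\ker\alpha$ compatibly with $d\alpha|_\xi$) allows a direct computation of $\tilde\omega(v,Jv)$ in the splitting $TM=\mathbb{R}\{\partial_r, R_\alpha\}\oplus \xi$; non-negativity follows from $\rho\geq 0$ and $(\rho r)'\geq 0$.

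Next, using $\partial_t u = J\partial_s u + X_H$ from the Floer equation, I would decompose
\[
\int u^*\tilde\omega = \int \tilde\omega(\partial_s u, J\partial_s u)\,ds\,dt + \int \tilde\omega(\partial_s u, X_H)\,ds\,dt,
\]
where the first term is non-negative by Step 1. Writing $\tilde\omega = \omega - d\tilde\theta$ and combining the standard identity $\int u^*\omega = \int |\partial_s u|^2\,ds\,dt + \int_{S^1}H_t(\gamma_-)\,dt - \int_{S^1}H_t(\gamma_+)\,dt$ with Stokes $\int u^*d\tilde\theta = \int_{S^1}\gamma_+^*\tilde\theta - \int_{S^1}\gamma_-^*\tilde\theta$ yields
\[
\int u^*\tilde\omega = \int |\partial_s u|^2\,ds\,dt + \left(\int_{S^1}H_t(\gamma_-)\,dt + \int_{S^1}\gamma_-^*\tilde\theta\right) - \left(\int_{S^1}H_t(\gamma_+)\,dt + \int_{S^1}\gamma_+^*\tilde\theta\right),
\]
giving a concrete lower bound in terms of action-like boundary quantities at the two orbits.

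Finally, invoke the integrality of $[\tilde\omega]$: for any choice of capping disks $v_\pm$ of $\gamma_\pm$ in $M$, the closed 2-sphere $v_-\cup u\cup \bar v_+$ has $\tilde\omega$-integral in $\mathbb{Z}$, so $\int u^*\tilde\omega$ lies in the coset $\int v_+^*\tilde\omega - \int v_-^*\tilde\omega + \mathbb{Z}$. Choosing cappings optimally confines the fractional offset to an interval of length strictly less than $1$. The hypothesis $\|H\|_{M-D_{1+3\epsilon/4}}<1$ — an oscillation norm on exactly the region where $\tilde\omega$ is supported — is calibrated precisely to this integrality unit, so together with the preliminary lower bound it forces $\int u^*\tilde\omega$ to strictly exceed $-1$ within its coset and hence to be non-negative. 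The main obstacle is this last matching: one must choose cappings so that the capping contributions $\int v_\pm^*\tilde\omega$ (localized outside $D_{1+3\epsilon/4}$) and the Stokes contributions $\int \gamma_\pm^*\tilde\theta$ (localized in $D_{1+\epsilon}$) together with the $\|H\|<1$ bound strictly exclude real values in $(-1,0)$ from the coset. The disjointness of the supports of $\tilde\omega$ and $\tilde\theta$ plays the decisive role in carrying out this bookkeeping.
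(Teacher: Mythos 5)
Your overall skeleton matches the paper's: (i) non-negativity of $\tilde\omega(\cdot,J\cdot)$ from tameness, (ii) a lower bound for $\int u^*\tilde\omega$ in terms of the restricted Hofer norm, (iii) integrality to promote the bound $>-1$ to $\geq 0$. The paper's proof is exactly this three-step argument, stated very tersely. Your Step~1 is a faithful (and more detailed) version of the paper's. However, there are two genuine problems with the remainder.

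First, your Step~2 detour through $\tilde\omega=\omega-d\tilde\theta$ and the action identity does not deliver the quantity you actually need. After writing
$\int u^*\tilde\omega = \int|\partial_s u|_g^2 + \left(\int H(\gamma_-)+\int\gamma_-^*\tilde\theta\right) - \left(\int H(\gamma_+)+\int\gamma_+^*\tilde\theta\right)$
you have a lower bound controlled by the oscillation of the action over the orbits, which live in $D_{1+\epsilon}$ --- not by $\|H\|_{M-D_{1+3\epsilon/4}}$, which is the norm that actually appears in the hypothesis. The reason the \emph{restricted} Hofer norm controls everything is that $\tilde\omega$ is supported in $M-D_{1+3\epsilon/4}$, so only the portion of $u$ there can make $\int\tilde\omega(\partial_s u,X_H)$ negative; the direct estimate $\int u^*\tilde\omega = \int|\partial_s u|^2_{\tilde g} + \int\tilde\omega(\partial_s u,X_H) \geq -\|H\|_{M-D_{1+3\epsilon/4}}$ (the paper's route) is what you want, and the action identity is a dead end here.

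Second, and more seriously, Step~3 as written does not work. You propose to ``choose cappings optimally'' to confine the fractional offset $\int v_+^*\tilde\omega - \int v_-^*\tilde\omega$ modulo $\mathbb{Z}$. But different cappings differ by elements of $\pi_2(M)$, over which $\tilde\omega$ integrates integrally; hence the offset modulo $\mathbb{Z}$ is a fixed topological invariant of the pair of orbits and \emph{cannot} be moved by choosing cappings. There is nothing to optimize. What makes the integrality work is that $[\tilde\omega]$ is a \emph{relative} integral class in $H^2(M,D;\mathbb{Z})$, and the Floer cylinder $u$ with asymptotics lying in the region $D_{1+3\epsilon/4}$ (where $\tilde\omega\equiv 0$) defines a relative $2$-cycle; pairing then gives $\int u^*\tilde\omega\in\mathbb{Z}$ on the nose, which is the claim the paper makes. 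Equivalently, one caps $\gamma_\pm$ inside $D_{1+3\epsilon/4}$ so that $\int v_\pm^*\tilde\omega=0$ exactly. Your bookkeeping with $\int\gamma_\pm^*\tilde\theta$ in this step is irrelevant to the integrality question and suggests a misreading of where the integer actually comes from.
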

\begin{proof}
We prove this lemma by computation. By the definitions of $\tilde{\omega}$ and a cylindrical almost complex structure $J$, we know that $\tilde{g}(\cdot, \cdot):= \tilde{\omega}(\cdot, J\cdot)$ defines a Riemannian metric on the region where $\tilde{\omega}\neq 0$. Then by the above computation we have that
$$
\int u^{*}\tilde{\omega}= \int\tilde{\omega}(\partial_{s}u, \partial_{t}u)\geq \int \abs{\partial_{s}u}_{\tilde{g}}^{2}- \norm{H}_{M-D_{1+\frac{3\epsilon}{4}}}\geq -\norm{H}_{M-D_{1+\frac{3\epsilon}{4}}}.
$$
Since $\tilde{\omega}$ is supported in $M-D_{1+\frac{3\epsilon}{4}}$, we only need to consider the relative Hofer norm $\norm{H}_{M-D_{1+\frac{3\epsilon}{4}}}$. In particular, if the image of $u$ does not intersect $M-D_{1+\frac{3\epsilon}{4}}$, then $\int u^{*}\tilde{\omega}=0$.

Since the non-constant orbits of $H$ are in $D_{1+\frac{3\epsilon}{4}}$, the cylinder $u$ represents a relative homology class in $H_{2}(M, D)$. By our assumption that the number $\int u^{*}\tilde{\omega}$ is an integer, when $\norm{H}_{M-D_{1+\frac{3\epsilon}{4}}}< 1$, the number $\int u^{*}\tilde{\omega}$ is non-negative.
\end{proof}

Note that Lemma \ref{pos} also works for a family of Hamiltonian $\lbrace H_{t,s}\rbrace$ under small perturbations. That is, if $\norm{H_{t,s}-H_{t,s'}}_{M-D_{1+\frac{3\epsilon}{4}}}< 1$ for any $s, s'$ then we still have positivity of solutions of the parameterized Floer equation.

\subsection{A filtration which is not exhaustive}
The computation in the last subsection tells that when the Hamiltonian functions have small relative Hofer norms outside the Liouville domain, the outside part of the corresponding Floer solutions carry non-negative $\tilde{\omega}$-energy. Now we use this fact to define a filtration on the telescope, which is the underlying complex of the relative symplectic cohomology.

First we consider the case of a single $S$-shape Hamiltonian $H$ such that $\norm{H}<1$ on $M-D_{1+\frac{3}{4}\epsilon}$. We define a valuation of a single element
$$
a\cdot \gamma\in CF^{k}(H)= \bigoplus_{\gamma\in \mathcal{CP}^{k}(H)} \Lambda_{0}\cdot \gamma
$$
by setting
\begin{equation}
\sigma(a\cdot \gamma)=v(a)- \int_{\gamma}H -\int_{\gamma}\tilde{\theta}
\end{equation}
where $v(a)$ is the valuation on $\Lambda_{0}$. If $\gamma$ is a constant orbit, the integral $\int_{\gamma}\tilde{\theta}$ is zero. For a general sum $x=\sum_{i}a_{i} \gamma_{i}$ we define the valuation as
\begin{equation}
\sigma(x):=\inf_{i}\lbrace \sigma(a_{i} \gamma_{i})\rbrace =\inf_{i}\lbrace v(a_{i})- \int_{\gamma_{i}} H_{i}- \int_{\gamma_{i}}\tilde{\theta}\rbrace.
\end{equation}

\begin{lemma}
For the valuation $\sigma$ we have that
$$
\sigma(d(a\cdot \gamma))\geq \sigma(a\cdot \gamma).
$$
Hence it induces a filtration
\begin{equation}
\mathcal{F}^{\lambda} CF^{k}(H)= \lbrace x\in CF^{k}(H)\mid \sigma(x)\geq \lambda\rbrace
\end{equation}
on the complex $CF^{k}(H)$.
\end{lemma}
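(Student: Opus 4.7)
The plan is to verify the inequality term-by-term: take a generator $a\cdot\gamma_-$ and inspect a single summand $a\cdot T^{E(u)}\gamma_+$ appearing in $d(a\cdot\gamma_-)$, where by (\ref{operator}) the weight is $E(u)=\int u^*\omega+\int_{\gamma_+}H-\int_{\gamma_-}H$. Its $\sigma$-valuation equals $v(a)+E(u)-\int_{\gamma_+}H-\int_{\gamma_+}\tilde\theta$, and after cancellation of the $H$-integrals this becomes $v(a)+\int u^*\omega-\int_{\gamma_-}H-\int_{\gamma_+}\tilde\theta$. So the desired inequality $\sigma(a\cdot T^{E(u)}\gamma_+)\geq\sigma(a\cdot\gamma_-)$ reduces, after cancelling $v(a)-\int_{\gamma_-}H$ from both sides, to the purely geometric estimate
\begin{equation*}
\int u^*\omega+\int_{\gamma_-}\tilde\theta-\int_{\gamma_+}\tilde\theta\geq 0.
\end{equation*}

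The key step is to rewrite $\int u^*\omega$ using the auxiliary splitting $\omega=\tilde\omega+d\tilde\theta$ from Lemma \ref{aux}. By Stokes' theorem applied to the finite-energy cylinder $u$, whose asymptotic ends are $\gamma_\pm$, we get $\int u^*d\tilde\theta=\int_{\gamma_+}\tilde\theta-\int_{\gamma_-}\tilde\theta$, so
\begin{equation*}
\int u^*\omega=\int u^*\tilde\omega+\int_{\gamma_+}\tilde\theta-\int_{\gamma_-}\tilde\theta.
\end{equation*}
Substituting this into the inequality collapses it to $\int u^*\tilde\omega\geq 0$, which is precisely the conclusion of Lemma \ref{pos}, valid because the standing assumption $\norm{H}_{M-D_{1+\frac{3\epsilon}{4}}}<\frac{1}{2}<1$ together with the integrality of $[\tilde\omega]$ forces the a priori lower bound $\int u^*\tilde\omega\geq -\norm{H}_{M-D_{1+\frac{3\epsilon}{4}}}$ to upgrade to non-negativity.

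Finally, for a general chain $x=\sum_i a_i\gamma_i$ one extends by taking the infimum: $\sigma(dx)\geq \min_i\sigma(d(a_i\gamma_i))\geq\min_i\sigma(a_i\gamma_i)=\sigma(x)$. This also shows $\mathcal{F}^\lambda CF^k(H)$ is preserved by $d$, so it defines a filtration as claimed. The mild technicality is making sure the Stokes computation is legal, i.e.\ that the integrand decays fast enough at the asymptotic ends; this is standard since $u$ converges exponentially to $\gamma_\pm$ by non-degeneracy of the orbits, and the identification of $\int u^*d\tilde\theta$ with the boundary difference is a routine consequence.

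The main obstacle, and the only place where the geometry of the pair $(M,D)$ enters, is the non-negativity $\int u^*\tilde\omega\geq 0$; the rest is algebraic bookkeeping of actions and Stokes' theorem. This is exactly why the $S$-shape hypothesis (small Hofer norm outside $D_{1+3\epsilon/4}$) and the integral lift hypothesis on $[\tilde\omega]$ were isolated in the previous subsection: together they furnish the integer-valued coercive estimate needed here, and everything else falls out formally.
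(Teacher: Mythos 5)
Your proof is correct and follows essentially the same route as the paper: compute $\sigma$ of a single output term, use the splitting $\omega=\tilde\omega+d\tilde\theta$ and Stokes' theorem to cancel the $\tilde\theta$-boundary terms against the $\tilde\theta$-integrals in the valuation, and reduce to the positivity $\int u^*\tilde\omega\ge 0$ supplied by Lemma~\ref{pos}. The only cosmetic difference is that you spell out the exponential-decay justification for Stokes, which the paper leaves implicit.
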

\begin{proof}
We prove this lemma by a direct computation. First from our definition
$$
d(a\cdot \gamma)=a\cdot \sum_{\gamma', A} n(\gamma, \gamma'; A)\cdot \gamma'\cdot T^{\omega(A)-\int_{\gamma}H+ \int_{\gamma'}H}
$$
where $A$ is the homotopy class represented by a solution $u$ connecting $\gamma$ and $\gamma'$. Hence we have that
\begin{equation}
\begin{aligned}
\sigma(d(a\cdot \gamma))=& \sigma(a\cdot \sum_{\gamma',A} n(\gamma, \gamma'; A)\cdot \gamma'\cdot T^{\omega(A)-\int_{\gamma}H \int_{\gamma'}H})\\
=& \inf_{\gamma',A}\lbrace v(a)+ \omega(A)-\int_{\gamma}H+ \int_{\gamma'}H- \int_{\gamma'}H- \int_{\gamma'}\tilde{\theta} \rbrace\\
=& \inf_{\gamma',A}\lbrace v(a)+ \omega(A)-\int_{\gamma}H- \int_{\gamma'}\tilde{\theta} \rbrace\\
=& \inf_{\gamma',A}\lbrace v(a)+ \tilde{\omega}(A)+ d\tilde{\theta}(A)- \int_{\gamma}H- \int_{\gamma'}\tilde{\theta}\rbrace\\
=& \inf_{\gamma',A}\lbrace v(a)+ \tilde{\omega}(A)+ \int_{\gamma'}\tilde{\theta}- \int_{\gamma}\tilde{\theta}- \int_{\gamma}H- \int_{\gamma'}\tilde{\theta}\rbrace\\
=& \inf_{\gamma',A}\lbrace v(a)+ \tilde{\omega}(A)- \int_{\gamma}\tilde{\theta}- \int_{\gamma}H\rbrace\\
=& \inf_{\gamma',A}\lbrace \sigma(a\cdot \gamma)+ \tilde{\omega}(A) \rbrace\\
\geq & \sigma(a\cdot \gamma).
\end{aligned}
\end{equation}
Here we use that $\omega=\tilde{\omega}+ d\tilde{\theta}$ and the Stokes formula to compute $d\tilde{\theta}(A)$. The last inequality uses Lemma \ref{pos}.

Similarly we can check that $\sigma(dx)\geq \sigma(x)$ for a sum $x=\sum_{i}a_{i} \gamma_{i}$. Hence this valuation $\sigma$ induces a decreasing filtration on $CF(H)$.
\end{proof}

The above computation also works for a one-parameter family of Hamiltonian functions, if the variation of these functions is sufficiently small outside the Liouville domain. More precisely, we pick a monotone one-parameter family of Hamiltonian functions $H_{n,t}$ which form an acceleration data to compute the relative symplectic cohomology of $D$, such that
	\begin{enumerate}
		\item The relative Hofer norm of $H_{n,t}$ on $M-D_{1+\frac{3}{4}\epsilon}$ is less than one for each $n$;
		\item The relative Hofer norm of $H_{n,t}- H_{n+1, t}$ on $M-D_{1+\frac{3}{4}\epsilon}$ is less than one for each $n$.
	\end{enumerate}
Then each of the continuation maps also satisfies the above lemma. Therefore the telescope given by
$$
\mathcal{CF}:= CF(H_{1, t})\rightarrow CF(H_{2, t})\rightarrow \cdots \rightarrow CF(H_{n, t})\rightarrow \cdots
$$
satisfies that its differentials and continuation maps are compatible with $\sigma$. We write $\widehat{tel}(\mathcal{CF})$ as the completion of the telescope of this one-ray. For a general element $x=\sum_{i}a_{i} \gamma_{i}\in \widehat{tel}(\mathcal{CF})$ we define $\sigma: \widehat{tel}(\mathcal{CF})\rightarrow \mathbb{R}\cup \lbrace -\infty\rbrace$ by
\begin{equation}\label{fil}
\sigma(x):=\inf_{i}\lbrace \sigma(a_{i} \gamma_{i})\rbrace =\inf_{i}\lbrace v(a_{i})- \int_{\gamma_{i}} H_{i,t}- \int_{\gamma_{i}}\tilde{\theta}\rbrace
\end{equation}
This valuation gives a filtration on $\widehat{tel}(\mathcal{CF})$. The differentials of the completed telescope are sums of the differentials and continuation maps in the Floer complex. Hence the differential of $\widehat{tel}(\mathcal{CF})$ is compatible with this filtration, which makes $\widehat{tel}(\mathcal{CF})$ a filtered differential graded module. The homology of $\widehat{tel}(\mathcal{CF})$ is the relative symplectic cohomology.

Note that for a fixed Hamiltonian $H$ this valuation takes values strictly in $\mathbb{R}$ since $H$ is bounded and there are finitely many periodic orbits. But for a general element in the completion of the telescope the valuation can be negative infinity. Therefore the induced filtration is \textit{not exhaustive}. That is,
$$
\bigcup_{\lambda} \mathcal{F}^{\lambda} \widehat{tel}(\mathcal{CF})\neq \widehat{tel}(\mathcal{CF})
$$
which is one main reason that the induced spectral sequence sometimes does not converge. We also remark that this filtration is \textit{weakly convergent} since
$$
\bigcap_{\lambda} \mathcal{F}^{\lambda} \widehat{tel}(\mathcal{CF})=\lbrace 0\rbrace.
$$

Now we recall two foundational theorems on spectral sequences from \cite{Mc}, one on the existence and the other on the convergence.

\begin{definition}
Let $R$ be a commutative ring with unit. An $R$-module $A$ is called a filtered differential graded module if
\begin{enumerate}
\item $A$ is a direct sum of submodules, $A=\oplus_{n=0}^{\infty}A^{n}$.
\item There is an $R$-linear map $d: A\rightarrow A$, satisfying $d\circ d=0$.
\item $A$ has a filtration $F$ and the differential $d$ respects the filtration, that is, $d: F^{p}A\rightarrow F^{p}A$.
\end{enumerate}
\end{definition}

\begin{theorem}\label{spec1}(Theorem 2.6, \cite{Mc})
Each filtered differential graded module $(A, d, F)$ determines a spectral sequence, $\lbrace E_{r}^{*, *}, d_{r}\rbrace, r=1, 2, \cdots$ with $d_{r}$ of bidegree $(r, 1-r)$ and
$$
E_{1}^{p,q}\cong H^{p+q}(F^{p}A/F^{p+1}A).
$$
\end{theorem}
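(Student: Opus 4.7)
The plan is to construct the successive pages $E_r^{p,q}$ explicitly as subquotients of $A$, following the standard construction for filtered differential graded modules. First I would introduce the ``approximate cycles'' at stage $r$
$$Z_r^{p,q} := \{x \in F^p A^{p+q} \mid dx \in F^{p+r} A^{p+q+1}\}$$
and the ``approximate boundaries''
$$B_r^{p,q} := d(F^{p-r+1} A^{p+q-1}) \cap F^p A^{p+q},$$
and then define
$$E_r^{p,q} := \frac{Z_r^{p,q}}{Z_{r-1}^{p+1,q-1} + B_{r-1}^{p,q}}.$$
Since $d$ carries $Z_r^{p,q}$ into $Z_\infty^{p+r, q-r+1} \subset Z_r^{p+r, q-r+1}$ (because $d^2 = 0$), and sends the denominator $Z_{r-1}^{p+1,q-1} + B_{r-1}^{p,q}$ into the corresponding denominator in bidegree $(p+r, q-r+1)$, it descends to a well-defined map $d_r \colon E_r^{p,q} \to E_r^{p+r, q-r+1}$ of the claimed bidegree, which satisfies $d_r^2 = 0$ automatically.

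Next I would verify the key identity $H(E_r, d_r) \cong E_{r+1}$. This is a diagram chase: a class $[x] \in E_r^{p,q}$ is a $d_r$-cycle iff a representative can be chosen in $Z_{r+1}^{p,q}$ (one refines $x$ by adding an element of $Z_{r-1}^{p+1,q-1}$ to push $dx$ one step deeper in the filtration), and it is a $d_r$-boundary iff a representative lies in $B_r^{p,q} + Z_{r-1}^{p+1,q-1}$. Combining the two statements gives
$$\ker d_r / \mathrm{im}\, d_r \;\cong\; \frac{Z_{r+1}^{p,q}}{Z_r^{p+1,q-1} + B_r^{p,q}} \;=\; E_{r+1}^{p,q}.$$

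Finally, for the identification of $E_1$ one computes straight from the definitions: $Z_1^{p,q}$ consists of those $x \in F^p A^{p+q}$ whose class in the associated graded piece $F^p A / F^{p+1} A$ is a cycle; meanwhile $Z_0^{p+1, q-1} = F^{p+1} A^{p+q}$ (interpreted as the submodule killed when passing to the associated graded) and $B_0^{p,q} = d(F^p A^{p+q-1})$ produces the boundaries. The quotient is therefore precisely cycles-modulo-boundaries in the complex $F^p A / F^{p+1} A$ in total degree $p+q$, which is $H^{p+q}(F^p A / F^{p+1} A)$ as claimed; the differential $d_1$ is the connecting homomorphism of the short exact sequence $0 \to F^{p+1}A/F^{p+2}A \to F^pA/F^{p+2}A \to F^pA/F^{p+1}A \to 0$.

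The main obstacle is not conceptual but bookkeeping: one must keep the indices, bidegrees, and inclusions $B_{r-1} \subset B_r \subset \cdots \subset Z_r \subset Z_{r-1} \subset \cdots$ lined up correctly so that the isomorphism $H(E_r, d_r) \cong E_{r+1}$ is identified on the nose rather than only up to re-indexing. Since the statement only asserts existence of the spectral sequence and not its convergence, one need not address here the question of whether the filtration is exhaustive or Hausdorff, which is precisely the delicate issue the author has flagged for the valuation $\sigma$ on $\widehat{tel}(\mathcal{CF})$.
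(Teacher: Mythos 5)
Your approach is the standard construction of the spectral sequence of a filtered complex---essentially the one McCleary gives for Theorem 2.6, which the paper cites without reproducing its proof---so the strategy is the right one. However, there is an off-by-one index error in your displayed definition of the approximate boundaries. As written, $B_r^{p,q} := d(F^{p-r+1} A^{p+q-1}) \cap F^p A^{p+q}$ gives $B_0^{p,q} = d(F^{p+1} A^{p+q-1})$, which (since $d$ respects the filtration) is already contained in $Z_0^{p+1,q-1} = F^{p+1} A^{p+q}$; the denominator of your $E_1^{p,q}$ then collapses to $F^{p+1} A^{p+q}$, and the quotient $Z_1^{p,q}/F^{p+1}A^{p+q}$ computes only the \emph{cycles} of $(F^pA/F^{p+1}A,\bar d)$, not its cohomology.

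The formula should read $B_r^{p,q} := d(F^{p-r} A^{p+q-1}) \cap F^p A^{p+q}$. This is in fact the convention you silently switch to in the verification step, where you correctly write $B_0^{p,q} = d(F^p A^{p+q-1})$, and it is also what you need for the claim that ``$[x]$ is a $d_r$-boundary iff a representative lies in $B_r^{p,q} + Z_{r-1}^{p+1,q-1}$,'' since $d_r$-boundaries in bidegree $(p,q)$ are represented by $d\bigl(Z_r^{p-r,q+r-1}\bigr) = d(F^{p-r}A^{p+q-1})\cap F^pA^{p+q}$. With that single-index correction the remainder of your argument---the well-definedness of $d_r$, the identification $H(E_r,d_r)\cong E_{r+1}$ by diagram chase, and the description of $d_1$ as the connecting map of $0\to F^{p+1}/F^{p+2}\to F^p/F^{p+2}\to F^p/F^{p+1}\to 0$---goes through as stated, and matches the cited source.
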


\begin{theorem}\label{spec2}(Theorem 3.2, \cite{Mc})
Let $(A, d, F)$ be a filtered differential graded module such that the filtration is exhaustive and weakly convergent. Then the associated spectral sequence with $E_{1}^{p,q}\cong H^{p+q}(F^{p}A/F^{p+1}A)$ converges to $H(A, d)$, that is,
$$
E_{\infty}^{p,q}\cong F^{p}H^{p+q}(A, d)/F^{p+1}H^{p+q}(A, d).
$$
\end{theorem}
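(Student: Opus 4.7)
The plan is to carry out the standard construction of the spectral sequence of a filtered complex and then check convergence using the two hypotheses on $F$.

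First, I would define the $r$-almost cycles and boundaries
\[
Z_r^{p,q} := F^p A^{p+q} \cap d^{-1}\bigl(F^{p+r}A^{p+q+1}\bigr), \qquad B_r^{p,q} := F^p A^{p+q} \cap d\bigl(F^{p-r}A^{p+q-1}\bigr),
\]
and set $E_r^{p,q} := Z_r^{p,q}/(Z_{r-1}^{p+1,q-1} + B_{r-1}^{p,q})$. The restriction of $d$ descends to a well-defined differential $d_r : E_r^{p,q} \to E_r^{p+r,q-r+1}$ with $d_r^2=0$. For $r=1$ a direct computation gives $E_1^{p,q} \cong H^{p+q}(F^pA/F^{p+1}A)$, which is the conclusion of Theorem \ref{spec1} and the base case for what follows.

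Next, I would establish the recursion $H(E_r, d_r) \cong E_{r+1}$ by a routine diagram chase: a class in $\ker d_r$ has a representative in $Z_{r+1}^{p,q}$, while the image of $d_r$ is exactly what extends $B_r$ to $B_{r+1}$. Passing to the limit, put $Z_\infty^{p,q} := F^pA^{p+q} \cap \ker d$ and $B_\infty^{p,q} := F^pA^{p+q} \cap \operatorname{im} d$, and define $E_\infty^{p,q} := Z_\infty^{p,q}/(Z_\infty^{p+1,q-1} + B_\infty^{p,q})$. On the target side, filter cohomology by $F^pH^n(A) := \mathrm{image}\bigl(H^n(F^pA) \to H^n(A)\bigr)$.

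For the convergence statement I would consider the natural comparison map $\Phi : E_\infty^{p,q} \to F^pH^{p+q}(A)/F^{p+1}H^{p+q}(A)$ sending $[x] \mapsto [x]$. Surjectivity is immediate from the definition of $F^pH^n$: any class in $F^pH^{p+q}$ is represented by a cycle in $F^pA$, i.e.\ by an element of $Z_\infty^{p,q}$. Exhaustiveness $\bigcup_p F^pA = A$ enters by guaranteeing that $\{F^pH^n\}$ is itself exhaustive on $H^n(A)$, so that no cohomology class is invisible to the spectral sequence. For injectivity of $\Phi$, if $x \in Z_\infty^{p,q}$ becomes a boundary modulo $F^{p+1}$ then $x = y + d\eta$ with $y$ a cycle in $F^{p+1}A$ and $\eta \in A$; then $y \in Z_\infty^{p+1,q-1}$ and $d\eta \in B_\infty^{p,q}$, so $[x] = 0$ in $E_\infty^{p,q}$. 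Weak convergence $\bigcap_p F^pA = 0$ is needed precisely to deduce $\bigcap_p F^pH^n = 0$, so that the associated graded on the target side actually separates distinct cohomology classes.

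The main obstacle is isolating where the two hypotheses bite: the algebra of $Z_r, B_r, E_r, d_r$ goes through for any filtered differential graded module, but without exhaustiveness $E_\infty$ only detects a subobject of $H(A,d)$, and without weak convergence it collapses distinct classes. This is precisely the reason the body of the paper invests so much geometric effort, through the index bounded condition, in replacing the non-exhaustive filtration on $\widehat{tel}(\mathcal{CF})$ by a quasi-isomorphic exhaustive one before appealing to this theorem.
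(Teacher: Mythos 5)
The paper cites McCleary for this rather than proving it, so there is no internal proof to match; your outline follows the standard textbook construction, and the algebra you perform is essentially correct, but your account of \emph{where} the two hypotheses enter is misplaced, and one of the claims you make is false. The spectral-sequence $E_\infty$ term is built from the limit terms $\bigcap_r Z_r^{p,q}$ and $\bigcup_r B_r^{p,q}$ — otherwise it has nothing to do with the pages $\{E_r\}$ you constructed — whereas you simply \emph{defined} $Z_\infty^{p,q} := F^pA\cap\ker d$ and $B_\infty^{p,q}:=F^pA\cap\operatorname{im} d$. The two hypotheses are exactly what legitimize that shortcut: weak convergence $\bigcap_p F^pA=0$ gives $\bigcap_r Z_r^{p,q}=F^pA\cap d^{-1}\bigl(\bigcap_r F^{p+r}A\bigr)=F^pA\cap\ker d$, and exhaustiveness $\bigcup_p F^pA=A$ gives $\bigcup_r B_r^{p,q}=F^pA\cap d\bigl(\bigcup_r F^{p-r}A\bigr)=F^pA\cap\operatorname{im} d$. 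Once those chain-level identifications are in hand, $E_\infty^{p,q}$ and $F^pH^{p+q}/F^{p+1}H^{p+q}$ are \emph{literally the same} quotient of $F^pA\cap\ker d$ (a one-line Noether isomorphism computation), so your map $\Phi$ is an isomorphism by inspection; the surjectivity/injectivity check you give is just unwinding that identity, not where the hypotheses do work.

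By contrast, the explanations you attached to the hypotheses are off. Exhaustiveness of $F^\bullet A$ does imply exhaustiveness of $\{F^pH^n\}$, but that is a pleasant consequence, not the step the proof needs. And the claim that weak convergence is ``needed precisely to deduce $\bigcap_p F^pH^n=0$'' is false: Hausdorffness of the filtration on $A$ does \emph{not} imply Hausdorffness of the induced filtration on $H(A,d)$ in general — that failure is exactly what the theory of conditional convergence (Boardman) is designed to handle — and the theorem as stated makes no assertion about $\bigcap_p F^pH^n$. The hypotheses bite at the chain level, in pinning down $Z_\infty$ and $B_\infty$ as the limits of $Z_r$ and $B_r$, not at the level of the filtration on cohomology.
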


\section{The induced spectral sequence}\label{sec:ss}
Previously we constructed a filtration on $\widehat{tel}(\mathcal{CF})$, by using a special family of Hamiltonian functions that have small variations outside $D$. However the induced spectral sequence does not always converge since the filtration is not exhaustive. Now we study the particular case that $D$ is a Liouville domain with an index bounded boundary in a symplectic Calabi-Yau manifold $M$. 

First, we give an outline of the proof in this section. For a fixed degree $k$, we will study $SH^{k}_{M}(D)$. In practice we will study the Hamiltonian Floer cohomology of a lower semi-continuous function, such that it is a fixed non-degenerate Hamiltonian function on $D\times S^{1}$ and it is positive infinity outside. It is isomorphic to $SH^{k}_{M}(D)$ over the Novikov field. Abusing the notation, we still write it as $SH^{k}_{M}(D)$. Let $\widehat{tel}(\mathcal{CF})$ be the completed telescope, constructed by a special family of Hamiltonian functions as in the previous section. Pick a sequence
$$
0< E_{1}< E_{2}< \cdots< E_{l}< \cdots
$$
of positive numbers going to infinity. We have a sequence of telescopes $tel(\mathcal{CF})\otimes_{\Lambda_{0}}\Lambda_{E_{l}}$, which form an inverse system by using projection maps. The homology 
$$
SH^{k}_{M}(D; \Lambda_{E_{l}}):= H^{k}(tel(\mathcal{CF})\otimes_{\Lambda_{0}}\Lambda_{E_{l}})
$$ 
is called the truncated symplectic cohomology. Since projections are chain maps, we have an inverse system in the homology level
$$
\cdots \leftarrow SH^{k}_{M}(D; \Lambda_{E_{l-1}})\leftarrow SH^{k}_{M}(D; \Lambda_{E_{l}})\leftarrow \cdots.
$$
The inverse limit
$$
\varprojlim_{l} SH^{k}_{M}(D; \Lambda_{E_{l}})
$$
is called the \textit{reduced symplectic cohomology} in \cite{GroV}. The relation between this reduced symplectic cohomology and the relative symplectic cohomology can be studied by verifying certain Mittag-Leffler condition of the above inverse systems.

The goal of this section is the following.
\begin{enumerate}
	\item For each $E_{l}$, we will construct two chain models which both compute $SH^{k}_{M}(D; \Lambda_{E_{l}})$. They are defined by using particular Hamiltonians, which depend on $E_{l}$. The first chain model is a telescope, on which the filtration defined in the previous section is exhaustive. Hence we have a convergent spectral sequence for this chain model to compute $SH^{k}_{M}(D; \Lambda_{E_{l}})$. This will establish $(1), (2)$ in Theorem \ref{main}.
	\item The second chain model is a direct limit. It helps us to show $SH^{k}_{M}(D; \Lambda_{E_{l}})$ is finitely-generated, and the number of generators is independent of $E_{l}$, thanks to the index bounded condition. By using this, we can verify a finite homological torsion criterion in \cite{GroV}, which shows that the reduced symplectic cohomology is isomorphic to the relative symplectic cohomology. This proves $(3)$ in Theorem \ref{main}.
\end{enumerate}

\subsection{Ignoring upper orbits and convergence}

For a monotone family of $S$-shape Hamiltonian functions, the orbits form two groups: upper orbits and lower orbits. And the Floer differentials/continuation maps have four components: upper-to-upper, upper-to-lower, lower-to-upper and lower-to-lower. The next lemma says that, under the index bounded condition, any lower-to-upper Floer trajectory has a big topological energy.

\begin{lemma}\label{ge}
Given any energy bound $E>0$ and an integer $k$, there exists an $S$-shape Hamiltonian function such that any lower-to-upper Floer differential with degree-$k$ input has topological energy greater than $E$.
\end{lemma}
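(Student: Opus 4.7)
The plan is to take an $S$-shape Hamiltonian whose middle slope $\lambda$ is very large and to use the index bounded hypothesis to confine the degree-$k$ input orbit $\gamma_-$ to a thin collar around $\lbrace r = 1-\epsilon/4 \rbrace$ (where the cylindrical Hamiltonian is close to its minimum value on the cylinder) and the degree-$(k+1)$ output orbit $\gamma_+$ to a thin collar around $\lbrace r = 1+3\epsilon/4 \rbrace$ (where it is close to its maximum value, which diverges with $\lambda$). The resulting gap between the local actions of $\gamma_\pm$, combined with non-negativity of the $\tilde\omega$-energy from Lemma \ref{pos}, will force the topological energy of any connecting Floer trajectory to exceed $E$.

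Concretely, I fix a model profile $\rho\colon [1-\epsilon/4,1+3\epsilon/4]\to\mathbb{R}$ with $\rho(1-\epsilon/4)=0$, $\rho' \geq 0$ concave, vanishing at both endpoints and identically $1$ on $[1+\epsilon/4,1+\epsilon/2]$. For each large $\lambda$ I build a time-independent $S$-shape Hamiltonian $H_\lambda$ whose cylindrical part is $h_\lambda(r)=\lambda\rho(r)+c$, with $c<0$ small enough that $h_\lambda<0$ on $D$, extended to a $C^{2}$-small Morse function on $D_{1-\epsilon/4}$ and a Morse function on $M-D_{1+3\epsilon/4}$ whose derivatives are so small that its relative Hofer norm there is less than $1$; then I perturb it by Proposition 2.7 to reach non-degeneracy. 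Combining the index bounded hypothesis for $\partial D$ with the half-integer CZ-index comparison of Lemma 2.9 and the at-most-$1$ shift introduced by Proposition 2.7, the Reeb CZ-indices attainable by Hamiltonian orbits of a fixed degree form a finite set independent of $\lambda$. Hence there are constants $\mu_k',\mu_{k+1}'>0$, depending only on $k$ and on the contact form on $\partial D$, such that every degree-$k$ lower orbit $\gamma_-$ of $H_\lambda$ satisfies $h_\lambda'(r_-)<\mu_k'$ and every degree-$(k+1)$ upper orbit $\gamma_+$ satisfies $h_\lambda'(r_+)<\mu_{k+1}'$.

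Because $h_\lambda'(r)=\lambda\rho'(r)$ and $\rho'$ vanishes only at the two endpoints of the cylindrical interval, these bounds confine $r_-$ to a window around $1-\epsilon/4$ and $r_+$ to a window around $1+3\epsilon/4$ whose widths shrink to zero as $\lambda\to\infty$. Consequently $h_\lambda(r_-)\to c$ stays bounded, $h_\lambda(r_+)\to \lambda\rho(1+3\epsilon/4)+c\to+\infty$, and $r_\pm h_\lambda'(r_\pm)\leq (1+3\epsilon/4)\max(\mu_k',\mu_{k+1}')$ stays uniformly bounded. For any Floer trajectory $u$ from $\gamma_-$ to $\gamma_+$, decomposing $\omega=\tilde\omega+d\tilde\theta$ (Lemma \ref{aux}) and using $\tilde\theta=\theta$ at both orbits, Stokes' formula gives
\[
E_{\mathrm{top}}(u)=\int u^{*}\tilde\omega+\bigl(h_\lambda(r_+)+r_+ h_\lambda'(r_+)\bigr)-\bigl(h_\lambda(r_-)+r_- h_\lambda'(r_-)\bigr),
\]
with the sign of the $rh_\lambda'$ terms fixed by the convention that Reeb orbits are oriented in the positive Reeb direction. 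By Lemma \ref{pos} the first term is non-negative, and by the preceding estimates the bracketed difference diverges to $+\infty$ with $\lambda$. Choosing $\lambda$ large enough that this lower bound exceeds $E$ then yields the required $S$-shape Hamiltonian.

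The main obstacle is the index bookkeeping: one has to pin down the finite range of Reeb CZ-indices corresponding to Hamiltonian orbits of a fixed degree, accounting for both the $1/2$-shift of Lemma 2.9 and the $\pm 1$-shift from Proposition 2.7, so that the constants $\mu_k'$ and $\mu_{k+1}'$ are genuinely independent of $\lambda$. Once that dictionary is set up, the remaining argument is a direct computation in the cylinder combined with the $\tilde\omega$-positivity of Lemma \ref{pos}.
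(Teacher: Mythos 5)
Your proposal is correct and follows essentially the same route as the paper: decompose the topological energy via $\omega=\tilde\omega+d\tilde\theta$ and Stokes, bound the $\tilde\theta$-terms (equivalently the $r_\pm h'(r_\pm)$ terms) using the index bounded condition, invoke $\int u^*\tilde\omega\geq 0$ from Lemma~\ref{pos}, and then force divergence by raising the upper plateau of the $S$-shape Hamiltonian. The only differences are cosmetic: you re-derive the index bookkeeping that the paper already packaged into the notion of an index bounded $S$-shape Hamiltonian, and you pin the orbits to thin collars at $r=1-\epsilon/4$ and $r=1+3\epsilon/4$, whereas the paper simply observes that the Hamiltonian values at lower and upper orbits stay bounded and diverge respectively; the sign of the $r h'$ terms you flag is indeed $-r h'$ with the paper's convention $X_H=-\partial_r H\cdot R_\alpha$, but since these terms are merely bounded the sign does not affect the argument.
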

\begin{proof}
First, we choose an $S$-shape Hamiltonian function $H_{t}$ with $\norm{H}_{M-D_{1+\frac{3\epsilon}{4}}}$ less than $1$. For a Floer differential with degree-$k$ input $\gamma$ and degree-$(k+1)$ output $\gamma'$, the energy weight is
$$
\begin{aligned}
&\int u^{*}\omega -\int_{\gamma} H_{t} +\int_{\gamma'} H_{t}\\
=& -\int_{\gamma}\tilde{\theta} +\int_{\gamma'}\tilde{\theta} +\int u^{*}\tilde{\omega}-\int_{\gamma} H_{t} +\int_{\gamma'} H_{t}.
\end{aligned}
$$
By the index bounded condition, the difference $-\int_{\gamma}\tilde{\theta} +\int_{\gamma'}\tilde{\theta}$ is a bounded number which only depends on $k$. The computation in Lemma \ref{pos} says that $\int u^{*}\tilde{\omega}$ is non-negative. Therefore if we choose an $S$-shape Hamiltonian function such that its upper level is high enough, then the above energy weight is larger than $E$.
\end{proof}

Note that this estimate is uniform for all degree $k$ orbits, hence we can make a subcomplex which only contains lower orbits, which gives the following lemma.

\begin{lemma}
For any integer $k$ and an energy bound $E>0$, let $H$ be a Hamiltonian function which satisfies the Lemma \ref{ge} for all three integers $k-1, k, k+1$ for $E$.
Let $CF^{k, L}(H)$ be the subspace of $CF^{k}(H)$ which only contains lower orbits. Let $d$ be the restriction of the Floer differential to $CF^{k, L}(H)$. Then
$$
0\rightarrow CF^{k-1, L}(H) \xrightarrow{d} CF^{k, L}(H) \xrightarrow{d} CF^{k+1, L}(H)\rightarrow 0
$$
satisfies that $d\circ d=0$ over $\Lambda_{E}$.
\end{lemma}
\begin{proof}
Take $\gamma\in CF^{k-1, L}(H)$, the usual argument to show $d\circ d (\gamma)=0$ is to look at broken Floer trajectories. In our case, if it breaks along an upper orbit, then the energy weight is greater than $E$ by Lemma \ref{ge}, which is automatically zero over $\Lambda_{E}$. Hence we can ignore the upper orbits contributions.
\end{proof}

The above two lemmas tell us for any fixed energy bound $E$ and degree-$k$, we can use only lower orbits to form a homology theory. By the same argument, lower-to-upper continuation maps have big topological energy for particular Hamiltonians.

\begin{figure}
	\begin{tikzpicture}[xscale=0.6, yscale=0.6]
		\draw [->] (0,0)--(12,0);
		\draw [snake it] (0,-1)--(4,-1);
		\draw (4,-1) to [out=0, in=225] (5,0);
		\draw (5,0)--(7,3);
		\draw (7,3) to [out=45, in=180] (8,4);
		\draw [snake it] (8,4)--(10,4);
		\node at (11, 4) {$H_{n,t}$};

		\draw (5,0)--(7,5);
		\draw (7,5) to [out=45, in=180] (8,6);
		\draw [snake it] (8,6)--(10,6);
		\node at (11.5, 6) {$H_{n+1,t}$};
		
		\draw [dotted] (10,4)--(10,6);
		\node [right] at (10,5) {$\frac{1}{2}$};
		
	\end{tikzpicture}
	\caption{Hamiltonian functions with fixed lower parts and small variations outside}\label{Ham4}
\end{figure}

\begin{lemma}\label{conti}
Given any energy bound $E>0$ and an integer $k$, there exists a family of non-degenerate $S$-shape Hamiltonian functions $\lbrace
H_{n}\rbrace_{n\in \mathbb{N}}$ such that
\begin{enumerate}
\item $H_{1}\leq H_{2}\leq \cdots H_{n}\leq H_{n+1}\leq \cdots$;
\item $H_{1}= H_{2}= \cdots H_{n}= \cdots$ on $S^{1}\times D$;
\item $H_{1}$ satisfies the above two lemmas;
\item any lower degree-$k$ orbits are inside $D$;
\item any continuation map from a lower degree-$k$ orbit to an upper degree-$k$ orbit has topological energy greater than $E$.
\end{enumerate}
\end{lemma}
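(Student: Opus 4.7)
My plan is to construct $\{H_n\}$ inductively: first fix $H_1$ to satisfy (3) and (4), then for each $n \geq 2$ define $H_{n+1}$ by agreeing with $H_n$ on $D$ and reshaping outside so that $H_{n+1}$'s upper region takes values uniformly much higher than those of $H_n$. The key input is the index bounded condition applied to the finite set $\{k-1, k, k+1\}$, which yields a uniform bound $\mu_k$ on the periods of contractible Reeb orbits of those degrees (after accounting for the $\pm 1/2$ shift from the CFHW perturbation).

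For the base case I would pick $H_1$ with linear slope $\lambda_1$ large enough to enforce the conclusions of the preceding two lemmas, which is precisely condition (3). Concavity of $h_1'$ forces $h_1'(1) \geq \lambda_1/2$, so taking $\lambda_1 > 2\mu_k$ ensures $h_1'(1) > \mu_k$; then every degree-$k$ lower orbit, sitting at some $r_0$ with $h_1'(r_0) \leq \mu_k$, lies in $\{r < 1\} \subset D$, giving (4). For the inductive step I would set $H_{n+1} \equiv H_n$ on $D \times S^1$ and, outside, extend to an $S$-shape Hamiltonian with slope $\lambda_{n+1}$ strictly larger than $\lambda_n$, chosen outside the discrete action spectrum of $\alpha$, imposing in addition
\[
\min_{r \in [1+\epsilon/2,\,1+3\epsilon/4]} h_{n+1}(r) \;-\; \max_{r \in [1+\epsilon/2,\,1+3\epsilon/4]} h_n(r) \;>\; E + 2(1+\epsilon)\mu_k.
\]
Global monotonicity, the common restriction on $D$ (yielding (1), (2), and (4) for every $H_n$), and the separation above are achievable by adjusting the additive constants and the upper-region profile of $h_{n+1}$.

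To verify (5), I would expand the topological energy of a continuation trajectory $u$ between degree-$k$ upper orbits $\gamma_\pm$ of $H_n$ and $H_{n+1}$ at radii $r_\pm$, using $\omega = \tilde{\omega} + d\tilde{\theta}$ and Stokes' theorem, to obtain
\[
\int u^* \tilde{\omega} \;+\; \int_{\gamma_+} \tilde{\theta} \;-\; \int_{\gamma_-} \tilde{\theta} \;+\; h_{n+1}(r_+) \;-\; h_n(r_-).
\]
The first term is non-negative by Lemma \ref{pos}, applied after subdividing the monotone homotopy between $H_n$ and $H_{n+1}$ into pieces of relative Hofer norm less than $1$ outside $D_{1+3\epsilon/4}$; each $\tilde{\theta}$-integral is bounded in absolute value by $(1+\epsilon)\mu_k$, since $\int_\gamma \tilde{\theta} = -r_0\tau$ with $r_0 \leq 1+\epsilon$ and $\tau \leq \mu_k$ by index-boundedness; and the remaining difference exceeds $E + 2(1+\epsilon)\mu_k$ by the separation estimate. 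The main obstacle will be the quantitative bookkeeping in the inductive construction, simultaneously enforcing concavity of each $h_n'$, avoidance of the discrete action spectrum by $\lambda_n$, global monotonicity in $n$, and the upper-region separation. Each constraint is either open or a countable-avoidance condition, so a concrete family exists by a standard parametric argument.
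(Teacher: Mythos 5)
Your proposal is correct and follows the same inductive strategy as the paper: fix $H_1$ satisfying the earlier two lemmas, keep the lower part frozen, and raise the upper plateau of $H_{n+1}$ high enough that the topological energy estimate $\int u^*\tilde\omega + (\text{bounded }\tilde\theta\text{-terms}) + \big(\int_{\gamma_+}H_{n+1} - \int_{\gamma_-}H_n\big) > E$ forces (5). You fill in details the paper leaves implicit — notably the concavity argument $h_1'(1)\ge \lambda_1/2$ to push degree-$k$ lower orbits into $\{r<1\}$ for (4), the explicit $2(1+\epsilon)\mu_k$ bound on $|\int_{\gamma_\pm}\tilde\theta|$, and the observation that the various constraints (concavity, slope avoiding $\mathrm{Spec}(\alpha)$, monotonicity, separation of plateaus) are open or countable-avoidance conditions — but the argument is the same as the paper's.
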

\begin{proof}
Take $H_{1}$ as a function which satisfies the above two lemmas. Then we construct $H_{2}$ in the following way
\begin{enumerate}
	\item $H_{1}=H_{2}$ on $D$;
	\item $H_{1}+\frac{1}{2}=H_{2}$ on $M-D_{1+\frac{3\epsilon}{4}}$;
	\item Near $\partial D$, the function $H_{2}$ is obtained from a cylindrical function by adding time-dependent perturbations. We assume the perturbation is small such that $H_{1}\leq H_{2}$ globally, see Figure \ref{Ham4}.
\end{enumerate} 

Then we repeat this process inductively to get $H_{n+1}$ from $H_{n}$. Hence the items $(1)-(4)$ are satisfied, and we can connect $H_{n}$ with $H_{n+1}$ by a monotone homotopy. If we have a continuation map from a lower degree-$k$ orbit $\gamma$ of $H_{n}$ to an upper degree-$k$ orbit $\gamma'$ of $H_{n+1}$, then it is weighted by an energy
$$
\begin{aligned}
	&\int u^{*}\omega -\int_{\gamma} H_{n} +\int_{\gamma'} H_{n+1} +\int (\partial_{s}H_{s})\\
	=& \int u^{*}\tilde{\omega} -\int_{\gamma} \tilde{\theta} +\int_{\gamma'} \tilde{\theta} -\int_{\gamma} H_{n} +\int_{\gamma'} H_{n+1} +\int (\partial_{s}H_{s}).
\end{aligned}
$$
By the construction, we have that $\norm{H_{n}- H_{n+1}}_{M-D_{1+\frac{3\epsilon}{4}}}<1$ hence the first term is non-negative. The monotonicity of $\partial_{s}H_{s}$ shows the last term is non-negative and the index bounded condition shows the second and third terms are bounded. So the whole energy is larger than $E$. Because the difference between the lower levels of $H_{n}$ and the upper level of $H_{n+1}$ are big enough. 
\end{proof}

Therefore by using the above family of Hamiltonian functions, we are computing the Hamiltonian Floer cohomology of a semi lower-continuous function, which is a non-degenerate Hamiltonian function on $D$ and is positive infinity outside $D$. By Proposition \ref{single}, the resulting invariant is isomorphic to the relative symplectic cohomology over the Novikov field.

Now we consider two direct systems
$$
\mathcal{CF}^{k}=CF^{k}(H_{1,t})\rightarrow CF^{k}(H_{2,t})\rightarrow \cdots
$$
and
$$
\mathcal{CF}^{k, L}=CF^{k, L}(H_{1,t})\rightarrow CF^{k, L}(H_{2,t})\rightarrow \cdots
$$
over $\Lambda_{E}$, induced by Hamiltonian functions defined in the above lemma. For each $n$, we have an inclusion of a sub-complex $CF^{k,L}(H_{n,t})\to CF^{k}(H_{n,t})$ over $\Lambda_{E}$. We recall the following algebraic property of a telescope of sub-complexes. Suppose that we have a commutative diagram of chain complexes
$$
\begin{tikzcd}
	\mathcal{C}':=
	& C_{1}'  \arrow[r] \arrow[d]
	& C_{2}'  \arrow[r] \arrow[d]
	& C_{3}'  \arrow[r] \arrow[d]
	& \cdots \\
	\mathcal{C}:=
	& C_{1} \arrow[r]
	&  C_{2} \arrow[r]
	&  C_{3} \arrow[r]
	& \cdots
\end{tikzcd}
$$
where horizontal maps are chain maps (which we call continuation maps) and vertical maps are inclusion maps of sub-complexes. Then we have an induced map between telescopes $tel(\mathcal{C}')\to tel(\mathcal{C})$.

\begin{lemma}[Lemma A.1 \cite{BSV}]\label{algebraic}
	Suppose that every $\gamma\in C_{n}$ there exists $N(\gamma)>0$ such that under continuation maps $\gamma$ lands in $C_{n+ N(\gamma)}'$, then $tel(\mathcal{C}')\to tel(\mathcal{C})$ is a quasi-isomorphism.
\end{lemma}

This lemma can be applied to our lower Floer complexes.

\begin{proposition}\label{ig}
	For any integer $k$ and energy bound $E>0$, there exists a family of Hamiltonian functions such that
	$$
	tel(\mathcal{CF}^{k, L})\to tel(\mathcal{CF}^{k})
	$$
	is a quasi-isomorphism over $\Lambda_{E}$.
\end{proposition}
\begin{proof}
	For any integer $k$ and energy bound $E>0$, we pick Hamiltonian functions as above to get a commutative diagram between $CF^{k,L}(H_{n,t})$ and $CF^{k}(H_{n,t})$ such that horizontal maps are continuations and vertical maps are inclusions. Next we check the condition in Lemma \ref{algebraic}.
	
	Pick $\gamma \in CF^{k}(H_{n,t})$, if it is a lower orbit, then its image under continuation maps are always lower, by $(5)$ in Lemma \ref{conti}. So we assume $\gamma$ is an upper orbit, and we will show after several continuation maps, it becomes lower or zero. Let $T^{E_{1}}\gamma_{1}$ be the image of $\gamma$ under the continuation map $CF^{k}(H_{n,t})\to CF^{k}(H_{n+1,t})$. Then
	$$
	E_{1}= -\int_{\gamma}\tilde{\theta} +\int_{\gamma_{1}}\tilde{\theta} +\int u_{1}^{*}\tilde{\omega} 	-\int_{\gamma}H_{n,t} +\int_{\gamma_{1}}H_{n+1,t}.
	$$
	If $\gamma_{1}$ is a lower orbit, then we are done. Otherwise we consider $T^{E_{2}}\gamma_{2}$ as the image of $T^{E_{1}}\gamma_{1}$ under the continuation map $CF^{k}(H_{n+1,t})\to CF^{k}(H_{n+2,t})$. We have 
	$$
	\begin{aligned}
		E_{2}=& E_{1}+ (-\int_{\gamma_{1}}\tilde{\theta} +\int_{\gamma_{2}}\tilde{\theta} +\int u_{2}^{*}\tilde{\omega} 	-\int_{\gamma_{1}}H_{n+1,t} +\int_{\gamma_{2}}H_{n+2,t})\\
		=& -\int_{\gamma}\tilde{\theta} +\int_{\gamma_{2}}\tilde{\theta} +\int u_{1}^{*}\tilde{\omega} +\int u_{2}^{*}\tilde{\omega} 	-\int_{\gamma}H_{n,t} +\int_{\gamma_{2}}H_{n+2,t}.
	\end{aligned}
	$$
	By the index bounded condition, the first two terms are bounded. Moreover, our Hamiltonian functions have small variations outside, the $\tilde{\omega}$-energy terms are non-negative. If $\gamma_{2}$ is still an upper orbit, we will consider its image under the third continuation map. Therefore after $N$ compositions of continuation maps, either $\gamma$ is sent to a lower orbit, or $E_{N}>E$, since $-\int_{\gamma}H_{n,t} +\int_{\gamma_{N}}H_{n+N,t}$ becomes arbitrarily large. This completes the proof by applying Lemma \ref{algebraic}.

\end{proof}

We call the process to get this quasi-isomorphism \textit{ignoring upper orbits}. And we define the truncated symplectic cohomology as
$$
SH^{k}_{M}(D; \Lambda_{E}):= H(tel(\mathcal{CF}^{k, L}); \Lambda_{E})= H(tel(\mathcal{CF}^{k}); \Lambda_{E}).
$$

Therefore Proposition \ref{ig} gives two ways to compute the truncated symplectic cohomology: by using $tel(\mathcal{CF}^{k, L})$ or $tel(\mathcal{CF}^{k})$. The former has the advantage that all generators of the underlying complex are local and the differentials are global. Moreover, the filtration on $tel(\mathcal{CF}^{k})$ constructed in Section \ref{sec:fil} becomes exhaustive when restricted to $tel(\mathcal{CF}^{k, L})$. Hence we will get a convergent spectral sequence to compute the truncated symplectic cohomology. This will be discussed in next subsection.  Now we study how this truncated symplectic cohomology is related to the usual relative symplectic cohomology.

First, note that a telescope is quasi-isomorphic to a direct limit. Hence we also have that
$$
SH^{k}_{M}(D; \Lambda_{E})= H(tel(\mathcal{CF}^{k, L}); \Lambda_{E})= H(\varinjlim_{n}\mathcal{CF}^{k, L}; \Lambda_{E}).
$$
The following lemma shows that the direct limit has a simpler description in our index bounded case.

\begin{lemma}\label{finite}
For a fixed degree $k$, the direct limit $\varinjlim_{n}\mathcal{CF}^{k, L}$ which contains lower orbits is a finite-dimensional free $\Lambda_{E}$-module.
\end{lemma}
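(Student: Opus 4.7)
The plan is to reduce the direct limit to a stable one, by first showing that the generating set of lower degree-$k$ orbits is finite and independent of $n$, and then verifying that the continuation maps in the direct system are isomorphisms over $\Lambda_E$.

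For the first step I would invoke the correspondence in Lemma~2.14 between Hamiltonian and Reeb Conley-Zehnder indices: a lower degree-$k$ one-periodic orbit of an $S$-shape Hamiltonian projects to a closed Reeb orbit on $\partial D$ with index in $\{k-1/2,\, k+1/2\}$. The index-bounded hypothesis then gives a uniform upper bound $\mu_k$ on the lengths of such Reeb orbits, and together with the non-degeneracy of $\alpha$ and compactness of $\partial D$ this yields only finitely many Reeb orbits in the relevant index range, hence only finitely many lower degree-$k$ Hamiltonian orbits, say $\gamma_1, \ldots, \gamma_m$. By conditions (2) and (4) of the preceding lemma---that $H_n$ agrees with $H_{n+1}$ on $S^1 \times D$, and that every lower degree-$k$ orbit lies inside $D$---this finite list is the same for every $n$, so each $CF^{k,L}(H_n; \Lambda_E)$ is canonically identified with the free $\Lambda_E$-module $\bigoplus_{i=1}^{m} \Lambda_E \cdot \gamma_i$.

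For the second step I would analyze the continuation maps $f_n \colon CF^{k,L}(H_n; \Lambda_E) \to CF^{k,L}(H_{n+1}; \Lambda_E)$. Choosing a monotone homotopy that is constant on $D$, the topological energy $\int |\partial_s u|^2 + \int \partial_s H_{s,t}(u)$ is a sum of two non-negative terms, and so vanishes only on $s$-independent solutions supported inside $D$; such a solution is forced to be a constant trajectory at some $\gamma_i$. With a regular choice of homotopy these constant trajectories contribute $+T^0$ on the diagonal, so $f_n = I + N$ with $N$ taking values in the maximal ideal of $\Lambda_E$. Gromov compactness, applied to the finite set of orbit pairs $(\gamma_i, \gamma_j)$ and the finite energy window $(0, E)$, supplies a positive lower bound $a_{\min}$ on the $T$-valuations of entries of $N$; hence $N^{k} = 0$ in $\Lambda_E$ for $k \geq E/a_{\min}$ and $f_n$ is invertible by a finite Neumann series. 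Consequently the direct limit is isomorphic to $CF^{k,L}(H_1; \Lambda_E)$, a free $\Lambda_E$-module of rank $m$.

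The principal technical point is showing that the constant trajectories at the $\gamma_i$ exhaust the topological-energy-zero contributions and carry the expected sign. This relies on the homotopy being constant on a neighborhood of $D$ (so that $\partial_s H \equiv 0$ along each lower orbit), on the standard transversality for constant solutions in Floer theory, and on the coherent orientation conventions; once these are in place, the remainder of the argument is formal.
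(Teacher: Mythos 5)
Your proposal is correct and takes essentially the same route as the paper. You first establish that the set of lower degree-$k$ generators is finite (via the index-bounded condition and the Hamiltonian--Reeb index comparison of Lemma 2.8) and independent of $n$ (because the lower parts of the $H_n$ are fixed), then show each continuation matrix has the form $I+N$ with $N$ valued in the maximal ideal, and conclude invertibility. The paper argues the same three points; its only cosmetic differences are that it invokes Lemma \ref{translation} (regularity plus the residual $\mathbb{R}$-action forcing $s$-independent continuation solutions to be constant) in place of your direct decomposition of topological energy into two non-negative summands, and it concludes invertibility by noting the determinant has constant term $1$ rather than via a truncated Neumann series --- both standard equivalent arguments over $\Lambda_E$. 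One small slip: the index-comparison result you cite is Lemma 2.8, not Lemma 2.14.
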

\begin{proof}
By the index bounded condition and non-degeneracy of the contact form, for each fixed Hamiltonian $CF^{k, L}(H_{n,t})$ is a finite-dimensional free $\Lambda_{E}$-module, generated by degree-$k$ orbits. On the other hand, the lower parts of the Hamiltonian functions $H_{n,t}$ are fixed. Hence we have a canonical identification between $CF^{k, L}(H_{n,t})$ for different $n$. Next we study the continuation maps between $CF^{k, L}(H_{n,t})$. After identifying the generators of $CF^{k, L}(H_{n,t})$ for different $n$, the continuation maps can be written as $l\times l$ matrices $\lbrace a^{n}_{ij}\rbrace$ with entries $a^{n}_{ij}\in \Lambda_{E}$, where $l$ is the dimension of $CF^{k, L}(H_{n,t})$.

Let $u$ be a Floer cylinder contributing to the continuation maps, and assume $u$ is contained in the region where the Hamiltonian function is fixed. Then by regularity it can only be the identity map, see Lemma \ref{translation}. There may be other Floer cylinders that travel outside $D$ and contribute to the continuation maps, which have non-trivial topological energy. Hence the continuation map, viewed as a matrix, have the following properties:
\begin{enumerate}
\item the entries on the diagonal are $a^{n}_{jj}= 1+ b^{n}_{jj}$ with $v(b^{n}_{jj})>0$;
\item the off-diagonal entries have strictly positive valuations.
\end{enumerate}
Note that the determinant of the matrix $\lbrace a^{n}_{ij}\rbrace$ has a constant term one, which says that the matrix is invertible. Hence each continuation map is an isomorphism of $\Lambda_{0}$-modules. So the direct limit $\varinjlim_{n}\mathcal{CF}^{k, L}$ is isomorphic to $CF^{k, L}(H_{n,t})$, which is a finite-dimensional free $\Lambda_{E}$-module.
\end{proof}

To effectively use the truncated symplectic cohomology, there are two options: to show it is an invariant with good properties, or to relate it with the original relative symplectic cohomology $SH^{k}_{M}(D)$.

For the first option, we expect the following is true

\begin{proposition}
The truncated symplectic cohomology $SH^{k}_{M}(D; \Lambda_{E})$ is a finite-dimensional $\Lambda_{E}$-module. Let $\lambda >0$ be the smallest number such that
$$
T^{\lambda}\cdot SH^{k}_{M}(D; \Lambda_{E})=0.
$$
Then the displacement energy of $D$ is not less than $\lambda$.
\end{proposition}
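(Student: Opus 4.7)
The finite-dimensionality of $SH^{k}_{M}(D;\Lambda_{E})$ is immediate from the preceding lemma: as the cohomology of a finite-dimensional free $\Lambda_{E}$-complex, it is itself a finitely generated $\Lambda_{E}$-module. Invoking the structure theorem for finitely generated modules over the valuation ring $\Lambda_{E}$ decomposes it into cyclic summands $\Lambda_{E}/\Lambda_{\geq a_{i}}$, so the number $\lambda$ in the statement is well-defined, equal to $\max_{i} a_{i}$, and bounded above by $E$.

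For the displacement bound, fix $\epsilon>0$ and choose a compactly supported Hamiltonian $K$ whose time-one flow $\phi_{K}$ displaces $D$ with Hofer norm $\|K\|<e(D)+\epsilon$. The plan is to show that $T^{\|K\|}$ annihilates $SH^{k}_{M}(D;\Lambda_{E})$; letting $\epsilon\to 0$ will then give $\lambda\leq e(D)$, as required.

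The key ingredient is a chain-level null-homotopy of $T^{\|K\|}\cdot \mathrm{id}$ on $\varinjlim_{n}\mathcal{CF}^{k,L}$, constructed by a quantitative refinement of Varolgunes' stable displaceability criterion (Theorem 4.0.1 in \cite{Var}). Starting from the acceleration data $(H_{n})$ of the previous lemma, I would consider a loop of Hamiltonians that inserts $K$ and removes it. Lemma \ref{translation} applied to the endpoints forces the resulting parametric continuation to represent $T^{\|K\|}\cdot \mathrm{id}$ on cohomology, since the total action change along the loop is precisely $\|K\|$. On the other hand, at the middle slice of the loop the disjointness $\phi_{K}(D)\cap D=\emptyset$ should kill every potentially contributing Floer strip modulo $\Lambda_{\geq E}$ for $n$ sufficiently large, providing the required null-homotopy.

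The main technical obstacle is arranging this interpolation within the class of (perturbed) index-bounded $S$-shape Hamiltonians, so that the truncated complex $\varinjlim_{n}\mathcal{CF}^{k,L}$ and its valuation filtration are preserved across the entire homotopy. In particular, the inserted $K$ must not create new low-index orbits violating the uniform index-bounded estimates, and every Floer strip appearing in the null-homotopy must be shown to carry topological energy at least $\|K\|-\epsilon$; the latter bound should follow from the integrality of $[\tilde{\omega}]$ together with Lemma \ref{pos}, in a manner parallel to the preceding subsection's argument for ignoring upper orbits. I expect this is precisely the step where the index-bounded hypothesis is essential, and it is the reason the proposition is phrased as an expectation rather than a theorem at this point.
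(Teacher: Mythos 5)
The paper's own treatment of this proposition is explicitly tentative: finite-dimensionality is deduced from the preceding lemma (exactly as you do), while the energy-relation half is stated only as an \emph{expectation}, with the author deferring to an argument ``similar to Remark 4.2.8 in \cite{Var}'' whose ``full proof will be pursued in the future.'' Your proposal therefore matches the paper on the part that is actually proved, and on the unproved part you sketch the standard quantitative displacement argument — insert a displacing Hamiltonian $K$ into a loop of acceleration data, use Lemma~\ref{translation} to show the endpoints of the loop differ by multiplication by $T^{\norm{K}}$, and use the disjointness $\phi_K(D)\cap D=\emptyset$ at the middle slice to produce a null-homotopy modulo $\Lambda_{\geq E}$, giving $\lambda\leq\norm{K}$ and hence $\lambda\leq e(D)$ — which is precisely the flavour of the argument the paper is pointing at. You also correctly identify the genuine remaining obstacle: carrying out this interpolation within the class of index-bounded $S$-shape Hamiltonians without destroying the truncation to lower orbits, the valuation filtration, or the positivity estimate of Lemma~\ref{pos}. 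Since neither you nor the paper closes that gap, your proposal is an honest reconstruction of the intended-but-unwritten argument, consistent with the paper's own caveats rather than a finished proof. (One small unaddressed point on both sides: the existence of a \emph{smallest} annihilating exponent $\lambda$, as opposed to merely a finite infimum, deserves a word — over the non-Noetherian ring $\Lambda_E$ this is not purely formal, though it follows if one knows the cohomology decomposes into cyclic summands as you indicate.)
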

\begin{proof}
The finite-dimensionality follows from the above lemma. We expect the proof of the energy relation is similar to Remark 4.2.8 in \cite{Var} in the original relative symplectic cohomology setting. The full proof will be pursued in the future.
\end{proof}

The number $\lambda$ is an analogue of the torsion threshold of the Lagrangian Floer cohomology (see Theorem J in \cite{FOOO}), which is related to the displacement energy of a Lagrangian submanifold. Since the energy bound $E$ can be as large as one needs, this proposition is useful for most displacement problems.

For the second option, we have the following

\begin{proposition}\label{recover}
The inverse limit of the truncated symplectic cohomology recovers the original relative symplectic cohomology. That is,
$$
(\varprojlim_{E}SH^{k}_{M}(D; \Lambda_{E}))\otimes_{\Lambda_{0}}\Lambda \cong SH^{k}_{M}(D)\otimes_{\Lambda_{0}}\Lambda.
$$
Here the inverse limit is taken as $E$ goes to infinity.
\end{proposition}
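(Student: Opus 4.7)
The plan is to compare both sides by passing through a single chain-level object and a Milnor $\varprojlim^{1}$ sequence. Set $T_{E}:=\mathrm{tel}(\mathcal{CF})\otimes_{\Lambda_{0}}\Lambda_{E}$. By definition of the completed telescope one has $\widehat{\mathrm{tel}}(\mathcal{CF})=\varprojlim_{E}T_{E}$, and the transition maps $T_{E'}\to T_{E}$ for $E'>E$ are degreewise surjective because they come from the ring surjections $\Lambda_{E'}\to\Lambda_{E}$. Milnor's exact sequence for inverse systems of complexes with degreewise surjective transitions then yields
$$
0\to{\varprojlim}_{E}^{\,1}H^{k-1}(T_{E})\to SH_{M}^{k}(D)\to \varprojlim_{E}H^{k}(T_{E})\to 0.
$$
The acyclic-assembly quasi-isomorphism $T_{E}\simeq\varinjlim_{n}(C_{n}\otimes\Lambda_{E})$, combined with the chain-level isomorphism $\varinjlim_{n}\mathcal{CF}\cong\varinjlim_{n}\mathcal{CF}^{L}$ over $\Lambda_{E}$ established in the previous subsection, identifies $H^{k}(T_{E})$ with $SH_{M}^{k}(D;\Lambda_{E})$. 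So the right-hand term is what we want and it only remains to kill the $\varprojlim^{1}$ term after $\otimes_{\Lambda_{0}}\Lambda$.

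For the vanishing of ${\varprojlim}^{1}SH_{M}^{k-1}(D;\Lambda_{E})\otimes_{\Lambda_{0}}\Lambda$, I would package the truncated data into a single $\Lambda_{0}$-level chain complex $\widetilde{C}^{\bullet}_{\infty}$ of finite-rank free $\Lambda_{0}$-modules (one generator per lower orbit in each degree) whose reduction modulo $\Lambda_{\geq E}$ gives precisely the complex $\widetilde{C}^{\bullet}_{E}=\varinjlim_{n}\mathcal{CF}^{\bullet,L}\otimes\Lambda_{E}$ from the earlier lemma, compatibly in $E$. Granting this, $H^{k}(\widetilde{C}^{\bullet}_{\infty})$ is a finitely generated $\Lambda_{0}$-module, and tensoring with $\Lambda$ kills its torsion summands, leaving a finite-dimensional $\Lambda$-vector space. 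The transition maps on $SH_{M}^{k-1}(D;\Lambda_{E})\otimes_{\Lambda_{0}}\Lambda$ then factor through this common finite-dimensional target, so their images stabilize for large $E$; the system satisfies Mittag-Leffler over $\Lambda$ and its $\varprojlim^{1}$ vanishes, completing the proof.

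The main technical hurdle is the construction of the global $\Lambda_{0}$-complex $\widetilde{C}^{\bullet}_{\infty}$ whose $E$-reductions are naturally isomorphic to the previously constructed $\widetilde{C}^{\bullet}_{E}$. The earlier lemmas produced each $\widetilde{C}^{\bullet}_{E}$ using an $E$-adapted Hamiltonian family, and a priori these do not come from a single $\Lambda_{0}$-coefficient object. I expect to arrange this by choosing a nested cofinal subfamily in which the lower region, and hence the lower orbits together with their low-energy Floer and continuation data, is fixed once and for all, while only the heights of the upper step are raised as $E$ grows. Upper-orbit contributions then acquire $T$-valuation tending to $+\infty$ along the family and assemble into a genuine $\Lambda_{0}$-linear differential. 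Invariance of $SH_{M}(D;\Lambda_{E})$ under passing to cofinal subfamilies (a chain homotopy argument using the continuation-map technology of Section 2) guarantees the resulting complex computes the objects appearing in the Milnor sequence.
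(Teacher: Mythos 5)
The overall skeleton of your argument — pass to the Milnor $\varprojlim^{1}$ exact sequence, use a finite-rank $\Lambda_{0}$-level model of the lower-orbit complex, and kill the $\varprojlim^{1}$ term — is the same as the paper's, and your identification of the technical hurdle (a single $\Lambda_{0}$-complex whose $\Lambda_{E}$-reductions give the truncated complexes) is the right thing to isolate. However, your handling of the $\varprojlim^{1}$-vanishing step has a genuine gap.

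The problem is with the sentence asserting that the transition maps on $SH^{k-1}_{M}(D;\Lambda_{E})\otimes_{\Lambda_{0}}\Lambda$ factor through the finite-dimensional space $H^{k-1}(\widetilde{C}^{\bullet}_{\infty})\otimes_{\Lambda_{0}}\Lambda$, and that Mittag--Leffler over $\Lambda$ then gives the needed vanishing. First, for every $E>0$ one has $\Lambda_{E}\otimes_{\Lambda_{0}}\Lambda=0$ (since $\Lambda_{\geq E}\cdot\Lambda=\Lambda$), so $SH^{k-1}_{M}(D;\Lambda_{E})\otimes_{\Lambda_{0}}\Lambda$ is the zero module for every $E$. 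The factorization claim is then vacuously true but informationless. Second, and more importantly, even a genuine Mittag--Leffler statement for the $\Lambda$-tensored inverse system would give $\varprojlim^{1}\bigl(SH^{k-1}_{M}(D;\Lambda_{E})\otimes\Lambda\bigr)=0$, whereas what the Milnor sequence requires is $\bigl(\varprojlim^{1}SH^{k-1}_{M}(D;\Lambda_{E})\bigr)\otimes\Lambda=0$; these do not coincide, because $\varprojlim^{1}$ does not commute with the (flat) base change $-\otimes_{\Lambda_{0}}\Lambda$ (e.g.\ $\varprojlim_{n}\Lambda_{0}/\Lambda_{\geq n}=\Lambda_{0}$ while $\varprojlim_{n}\bigl((\Lambda_{0}/\Lambda_{\geq n})\otimes\Lambda\bigr)=0$). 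So your argument neither verifies the hypothesis nor produces the conclusion you need.

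The paper avoids this entirely by establishing the Mittag--Leffler condition for the \emph{untensored} system $H^{k-1}(\mathcal{CF}(\Lambda_{E_{l}}))$. The key point is that, by the ignoring-upper-orbits step, the underlying complex is free of finite rank over $\Lambda_{E_{l}}$ and comes from a single finite-rank free $\Lambda_{0}$-complex; one then does a direct case analysis (for each generator $\alpha$: either $d\alpha=0$, in which case the transition maps are surjective, or $d\alpha$ has a least positive valuation $\lambda_{z}$, in which case $H^{k}$ vanishes once $E>\lambda_{z}$). This gives Mittag--Leffler, hence $\varprojlim^{1}=0$, before tensoring with $\Lambda$. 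Finally, the paper identifies the middle term $H^{k}(\varprojlim\mathcal{CF}(\Lambda_{E_{l}}))\otimes\Lambda$ with $SH^{k}_{M}(D)\otimes\Lambda$ via Proposition~\ref{single}, since the chosen Hamiltonians converge to a fixed negative Morse function on $D$ rather than to zero; this is a step your proposal leaves implicit and would need to supply.
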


To prove the proposition, first we recall some results in homological algebra.

\begin{definition}
An inverse system
$$
\mathcal{C}= C_{1}\leftarrow C_{2}\leftarrow \cdots
$$
is said to satisfy the \textit{Mittag-Leffler condition} if for each $n\in \mathbb{N}$, there exists $i\geq n$ such that for all $j\geq i$, we have that
$$
\text{Im}(C_{j}\rightarrow C_{n})\cong \text{Im}(C_{i}\rightarrow C_{n}).
$$
\end{definition}

The Mittag-Leffler condition shows the vanishing of the $\varprojlim^{1}$ of an inverse system.

\begin{proposition}(Proposition 3.5.7 \cite{W})
If an inverse system $\mathcal{C}$ satisfies the Mittag-Leffler condition, then $\varprojlim^{1}(\mathcal{C})=0$.
\end{proposition}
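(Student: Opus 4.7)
The plan is to combine the explicit two-term model of derived inverse limits with a reduction to surjective-transition subsystems, for which $\varprojlim^{1}$ vanishes by an elementary lifting.

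First I would fix the model. For an inverse system $\mathcal{C}=\{C_{n},\phi_{n+1}: C_{n+1}\to C_{n}\}_{n\geq 1}$ of abelian groups, consider the homomorphism
\[
\Phi: \prod_{n}C_{n}\longrightarrow \prod_{n}C_{n},\qquad (c_{n})_{n}\longmapsto (c_{n}-\phi_{n+1}(c_{n+1}))_{n},
\]
whose kernel realises $\varprojlim\mathcal{C}$ and whose cokernel realises $\varprojlim^{1}\mathcal{C}$. Establishing $\varprojlim^{1}\mathcal{C}=0$ is therefore equivalent to showing that $\Phi$ is surjective.

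Second, I would reduce to the case of surjective transitions. Let $C_{n}^{\infty}:=\bigcap_{j\geq n}\mathrm{Im}(\phi_{j,n}: C_{j}\to C_{n})$ denote the eventual image at level $n$. By Mittag--Leffler, the intersection stabilises at some finite stage $i_{n}$, and $\phi_{n+1}$ restricts to a surjection $C_{n+1}^{\infty}\to C_{n}^{\infty}$: any $z\in C_{n}^{\infty}$ lifts through some $y\in C_{k}$ with $k\geq\max(i_{n+1},n+1)$, and then $\phi_{k,n+1}(y)\in C_{n+1}^{\infty}$ is a preimage of $z$ under $\phi_{n+1}$. Writing $Q_{n}:=C_{n}/C_{n}^{\infty}$, all induced transitions $Q_{k}\to Q_{n}$ vanish for $k\geq i_{n}$, so the naive formula $x_{n}:=\sum_{k\geq n}\phi_{k,n}(y_{k})$ is actually a finite sum, proving $\varprojlim Q_{n}=\varprojlim^{1}Q_{n}=0$. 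The six-term exact sequence attached to $0\to\{C_{n}^{\infty}\}\to\mathcal{C}\to\{Q_{n}\}\to 0$ then identifies $\varprojlim^{1}\mathcal{C}$ with $\varprojlim^{1}\{C_{n}^{\infty}\}$, so that it suffices to handle the surjective subsystem.

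Third, for the surjective system one solves $\Phi((x_{n}))=(y_{n})$ by induction: set $x_{1}:=0\in C_{1}^{\infty}$, and given $x_{n}\in C_{n}^{\infty}$ choose $x_{n+1}\in C_{n+1}^{\infty}$ satisfying $\phi_{n+1}(x_{n+1})=x_{n}-y_{n}$, which is possible by surjectivity of the restricted transition. This exhibits an explicit preimage and completes the proof. The main technical point is the vanishing of derived limits for the quotient system $\{Q_{n}\}$: it rests on the observation that Mittag--Leffler for $\mathcal{C}$ forces the induced transitions on $\{Q_{n}\}$ to be eventually zero, so that the preimages one would like to construct under $\Phi$ are finite sums and cause no convergence issue. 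Once this is in hand the application in the next section is routine, and will be used to show that the inverse system $\{\varinjlim_{n}\mathcal{CF}^{k,L}\otimes_{\Lambda_{0}}\Lambda_{E}\}_{E}$ (whose transitions are surjective as quotients of finite-dimensional free $\Lambda_{E}$-modules by the previous lemma) has vanishing $\varprojlim^{1}$, so $\varprojlim_{E}$ commutes with cohomology in Proposition~\ref{recover}.
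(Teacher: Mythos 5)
The paper states this result as a quotation from Weibel (Proposition 3.5.7) and does not give a proof, so there is no in-paper argument to compare against; your proof is essentially Weibel's standard argument and it is correct. The three steps — the two-term $\prod C_n \xrightarrow{\Phi} \prod C_n$ model whose cokernel is $\varprojlim^{1}$, the reduction to the surjective subsystem of eventual images $C_n^{\infty}$ via the six-term sequence for $0\to\{C_n^{\infty}\}\to\mathcal{C}\to\{Q_n\}\to 0$ (using that the transitions on $\{Q_n\}$ are eventually zero so that both $\varprojlim$ and $\varprojlim^{1}$ of the quotient vanish), and the inductive lifting for the surjective subsystem — are exactly the textbook proof. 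One minor remark on your closing sentence about the application: in Proposition \ref{recover} the chain-level transitions $\mathcal{CF}(\Lambda_{E_{l+1}})\to\mathcal{CF}(\Lambda_{E_l})$ are surjective so Mittag--Leffler is trivial there, but what actually requires the index-bounded finite-dimensionality argument is verifying Mittag--Leffler for the \emph{cohomology} system $H^{*}(\mathcal{CF}(\Lambda_{E_l}))$, which is the input to the $\varprojlim^{1}$ term in the exact sequence of Proposition 3.5.8; worth keeping the two roles separate.
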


\begin{proposition}(Proposition 3.5.8 \cite{W})
For an inverse system
$$
\mathcal{C}= C_{1}\leftarrow C_{2}\leftarrow \cdots
$$
of complexes, which satisfies the degree-wise Mittag-Leffler condition, we have an short exact sequence
$$
0\rightarrow \varprojlim\nolimits^{1} H^{*}(C_{n}) \rightarrow H^{*}(\varprojlim C_{n}) \rightarrow \varprojlim H^{*}(C_{n}) \rightarrow 0.
$$
\end{proposition}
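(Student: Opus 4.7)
The plan is to deduce this from the standard presentation of $\varprojlim$ and $\varprojlim^{1}$ as the kernel and cokernel of the ``shift minus identity'' map on a countable product, together with the fact that cohomology commutes with products. The proof runs entirely inside the abelian category of cochain complexes; none of the Floer-theoretic input from earlier sections is needed. I would proceed in four steps.

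First, I would recall that for any inverse system of modules $\{M_n\}$ indexed by $\mathbb{N}$, there is a natural four-term exact sequence
\begin{equation*}
0 \longrightarrow \varprojlim M_n \longrightarrow \prod_{n} M_n \xrightarrow{\;1-s\;} \prod_{n} M_n \longrightarrow \varprojlim\nolimits^{1} M_n \longrightarrow 0,
\end{equation*}
where $s$ sends $(m_n)$ to $(f_{n+1}(m_{n+1}))$ for the transition maps $f_{n+1}\colon M_{n+1}\to M_n$. Applying this degree by degree to the inverse system of complexes $\{C_n\}$ gives a sequence of complexes in which the maps are chain maps. The Mittag-Leffler hypothesis, invoked degreewise via the preceding proposition (Proposition 3.5.7), forces $\varprojlim^{1} C_n = 0$ in every degree, so what we actually have is a short exact sequence of cochain complexes
\begin{equation*}
0 \longrightarrow \varprojlim C_n \longrightarrow \prod_{n} C_n \xrightarrow{\;1-s\;} \prod_{n} C_n \longrightarrow 0.
\end{equation*}

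Second, I would feed this short exact sequence into the long exact sequence in cohomology. This yields, for each $k$,
\begin{equation*}
\cdots \to H^{k-1}\!\left(\prod_{n} C_n\right) \xrightarrow{1-s_{*}} H^{k-1}\!\left(\prod_{n} C_n\right) \to H^{k}(\varprojlim C_n) \to H^{k}\!\left(\prod_{n} C_n\right) \xrightarrow{1-s_{*}} H^{k}\!\left(\prod_{n} C_n\right) \to \cdots.
\end{equation*}

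Third, I would use the fact that direct products are exact in the category of modules, so cohomology commutes with arbitrary products: $H^{k}(\prod_n C_n) \cong \prod_n H^{k}(C_n)$, and under this identification the induced map $1-s_{*}$ is again the shift-minus-identity map on the inverse system $\{H^{k}(C_n)\}$. Applying the four-term exact sequence of the first step backwards to this inverse system, the kernel of $1 - s_{*}$ on $\prod_n H^{k}(C_n)$ is precisely $\varprojlim H^{k}(C_n)$, and the cokernel is precisely $\varprojlim^{1} H^{k}(C_n)$. Splicing these identifications into the long exact sequence produces the desired short exact sequence
\begin{equation*}
0 \longrightarrow \varprojlim\nolimits^{1} H^{k-1}(C_n) \longrightarrow H^{k}(\varprojlim C_n) \longrightarrow \varprojlim H^{k}(C_n) \longrightarrow 0.
\end{equation*}

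The only step that requires any care is the surjectivity of $1-s$ in the second displayed sequence, since that is exactly where the Mittag-Leffler hypothesis is consumed; everything else is bookkeeping with the long exact sequence and the product-kernel-cokernel description of $\varprojlim$ and $\varprojlim^{1}$. I would therefore devote most of the write-up to verifying that the degreewise Mittag-Leffler condition indeed gives surjectivity by constructing, for any target sequence $(y_n) \in \prod_n C_n$, a preimage $(x_n)$ with $x_n - f_{n+1}(x_{n+1}) = y_n$: one defines $x_n$ by a finite telescoping sum using that the images $\operatorname{Im}(C_j \to C_n)$ stabilize, so that the ``infinite sum'' actually truncates in each coordinate. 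This is the only place the Floer-free homological-algebra argument could go wrong.
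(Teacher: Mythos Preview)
The paper does not give its own proof of this proposition; it simply cites Weibel (Proposition~3.5.8). Your argument is precisely the standard one found there: build the short exact sequence of complexes from the four-term $\varprojlim/\varprojlim^{1}$ presentation, use the degreewise Mittag-Leffler hypothesis together with the preceding Proposition~3.5.7 to kill $\varprojlim^{1}C_n$, pass to the long exact sequence in cohomology, and identify the kernel and cokernel of $1-s_{*}$ on $\prod_n H^{k}(C_n)$. So the overall plan is correct and matches the cited reference. You also correctly record the degree shift: the $\varprojlim^{1}$ term involves $H^{k-1}$, which the paper's statement suppresses by writing $H^{*}$ throughout.

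One genuine caution about your final paragraph: the explicit surjectivity argument you sketch is not right. Mittag-Leffler says the images $\operatorname{Im}(C_j\to C_n)$ \emph{stabilize}, not that they eventually vanish, so the formal telescoping sum $\sum_{j\ge n} f_{n,j}(y_j)$ does \emph{not} truncate to a finite sum. The standard route is instead to pass to the associated system of stable images (which has surjective transition maps) and solve $x_n - f_{n+1}(x_{n+1}) = y_n$ inductively there, then compare with the original system via a short exact sequence of inverse systems. But you do not actually need to carry this out: you have already invoked Proposition~3.5.7, which is exactly the statement that Mittag-Leffler forces $\varprojlim^{1}C_n=0$, i.e.\ that $1-s$ is surjective. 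So simply cite it and drop the explicit construction.
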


Now we begin the proof of Proposition \ref{recover}.

\begin{proof}
Fix a sequence of positive numbers
$$
0< E_{1}< E_{2}< \cdots <E_{l}< \cdots
$$
going to infinity. Let $\lbrace G_{n}\rbrace$ be a fixed acceleration data we used in Proposition \ref{ig} and Lemma \ref{finite}. Then we have that,
$$
SH^{k}_{M}(D)\otimes_{\Lambda_{0}} \Lambda= H(\varprojlim_{E_{l}} tel(CF^{k}(G_{n}) \otimes_{\Lambda_{0}} \Lambda_{E_{l}}))\otimes_{\Lambda_{0}} \Lambda.
$$
The complexes $tel(CF^{k}(G_{n}) \otimes_{\Lambda_{0}} \Lambda_{E_{l}})$ form an inverse system over $E_{l}$. Note that the Hamiltonian functions are independent of $E_{l}$, so the maps in this system are given by projection. Since projections are surjective, this inverse system satisfies the degree-wise Mittag-Leffler condition. Hence we have a short exact sequence
$$ 
0\rightarrow \varprojlim\nolimits^{1} H(C_{l}) \rightarrow H(\varprojlim C_{l}) \rightarrow \varprojlim H(C_{l}) \rightarrow 0,
$$
where $C_{l}:= tel(CF^{k}(G_{n}) \otimes_{\Lambda_{0}} \Lambda_{E_{l}})$. In this short sequence, the middle term is what we need to compute and the right term is the inverse limit of truncated symplectic cohomology. Hence it suffices to show the vanishing of the left term.

In Lemma \ref{finite}, we have showed that $H(\varinjlim_{n}\mathcal{CF}^{k, L}; \Lambda_{E_{l}})$ is finite-dimensional by using acceleration datum depending on $E_{l}$. On the other hand, the quasi-isomorphisms before Lemma \ref{finite} shows that
$$
H(C_{l})= H(\varinjlim_{n}\mathcal{CF}^{k, L}; \Lambda_{E_{l}}).
$$

So $H(C_{l})$ is finite-dimensional for any $E_{l}$. Moreover, for different $E_{l}$ the defining Hamiltonians for $\mathcal{CF}^{k, L}$ have the same fixed lower part. Hence the dimensions of $H(C_{l})$ have a uniform upper bound, independent of $l$, given by the number of lower degree $k$ orbits of Hamiltonian functions with a fixed lower part. Therefore $H(C_{l})$ over $l$ is an inverse system of finite-dimensional modules with a uniform upper bound on ranks. In the following we will show it satisfies the degree-wise Mittag-Leffler condition, which completes the proof. 
\end{proof}

Now we prove that $H(C_{l})$ over $l$ is an inverse system which satisfies the degree-wise Mittag-Leffler condition, by using Groman-Varolgunes' finite torsion criterion in \cite{GroV}.

Let $V$ be a $\Lambda_{0}$-module. For any element $v\in V$ we define 
$$
\tau(v):= \inf\lbrace \lambda\geq 0\mid T^{\lambda}v=0\rbrace
$$
and we define the \textit{maximal torsion} of $V$ as
$$
\tau(V):= \sup_{v\in V, \tau(v)< +\infty} \tau(v),
$$
see Definition 6.15 in \cite{GroV}. The following is a combination of Lemma 6.19 and Proposition 6.12 in \cite{GroV}. The invariant $SH^{k}_{M, \lambda}(D)$ in \cite{GroV} is our truncated invariant $SH^{k}_{M}(D; \Lambda_{E}))$ with $E=\lambda$.

\begin{proposition}
	If $SH^{k}_{M}(D)$ has finite maximal torsion, then  $H(C_{l})$ over $l$ is an inverse system which satisfies the degree-wise Mittag-Leffler condition.
\end{proposition}

Next we verify that $SH^{k}_{M}(D)$ has a finite maximal torsion. Suppose $SH^{k}_{M}(D)$ has an infinite maximal torsion. Then we have a sequence of elements $x_{n}, y_{n}$ in the completed telescope such that
$$
d(x_{n})=0, \quad d(y_{n})=T^{\tau_{n}}x, \quad 0<\tau_{1}<\tau_{2} \cdots<\tau_{n}< \cdots \to +\infty.
$$
This shows that as $l$ going to infinity, the number of $x_{n}$'s with valuations less than $E_{l}$ goes to infinity. Hence the rank of $H(C_{l})$ also goes to infinity, which contradicts to that their ranks are uniformly bounded from above.

\subsection{The first page of the spectral sequence}
We showed that the truncated symplectic cohomology recovers the relative symplectic cohomology. Now we study how to compute the truncated symplectic cohomology.

For any integer $k$ and energy bound $E>0$, we have three chain models
$$
tel(\mathcal{CF}^{k}), \quad tel(\mathcal{CF}^{k,L}), \quad \varinjlim_{n}\mathcal{CF}^{k, L}
$$
given by a particular family of Hamiltonian functions. They are all quasi-isomorphic hence they all compute the truncated symplectic cohomology over $\Lambda_{E}$. Now we equip the second chain model with the filtration defined in (\ref{fil}). Recall that for a general element $x=\sum_{i}a_{i} \gamma_{i}\in tel(\mathcal{CF}^{k,L})$ we define $\sigma: tel(\mathcal{CF}^{k,L})\rightarrow \mathbb{R}\cup \lbrace -\infty\rbrace$ by
\begin{equation}\label{val}
	\sigma(x):=\inf_{i}\lbrace v(a_{i})- \int_{\gamma_{i}} H_{i,t}- \int_{\gamma_{i}}\tilde{\theta}\rbrace.
\end{equation}
And for any $p\in\mathbb{R}$ we define
\begin{equation}\label{fil2}
	F^{p}tel(\mathcal{CF}^{k,L}):= \lbrace x\in tel(\mathcal{CF}^{k,L})\mid \sigma(x)\geq p\rbrace.
\end{equation}
By the computations in Section \ref{sec:fil}, we know that the differentials in the telescope are compatible with this filtration, which makes $tel(\mathcal{CF}^{k,L})$ a filtered differential graded module. Moreover, since all generators in $tel(\mathcal{CF}^{k,L})$ are lower orbits, the Hamiltonian terms in $\sigma(x)$ are uniformly bounded. By the index bounded condition, the integrals of $\tilde{\theta}$ in $\sigma(x)$ are also uniformly bounded. Hence $\sigma(x)> -\infty$ for any $x$. This shows that the filtration is exhaustive. (Actually this filtration is bounded from below by some number.) Therefore Theorem \ref{spec1} and \ref{spec2} give us a spectral sequence which converges to the truncated symplectic cohomology. This proves the convergence part in Theorem \ref{main} $(1)$. In the following we will compute the first page of this spectral sequence.

First we observe that by using the special family of $S$-shape Hamiltonians, all Floer cylinders which are not contained in the Liouville domain has positive $\tilde{\omega}$-energy, given that the asymptotic boundaries of the cylinders are lower orbits.

This will be proved via Abouzaid-Seidel's integrated maximum principle. We follow Section 3.4 \cite{BSV} and recall the setup now. Let $(K, \omega)$ be a symplectic manifold with a concave boundary $(Y, \alpha)$. That is, there is a symplectic embedding $(Y\times [c, c+\epsilon), d(r\alpha))$ onto a neighborhood of $Y$ in $K$, where $r$ is the Liouville coordinate. We will consider maps $u: (\Sigma, \partial \Sigma)\to (K, Y)$ solving the Floer equation with respect to a certain class of almost complex structures and Hamiltonian perturbations. Here $(\Sigma, \partial \Sigma)$ is a general Riemann surface with boundary.

To define our Floer equation, we choose a family of compatible almost complex structures $J_{z}$ parameterized by $z\in \Sigma$, and a Hamiltonian-valued one-form $\kappa\in \Omega^{1}(\Sigma; C^{\infty}(K))$. Note that we may interpret $\kappa$ as a one-form on $\Sigma \times K$, the de Rham differential has a decomposition
$$
d:=d_{\Sigma \times K}= d_{\Sigma}+ d_{K}.
$$
Then the non-degeneracy of $\omega$ gives us a Hamiltonian-vector-field-valued one-form $X_{\kappa}\in \Omega^{1}(\Sigma; C^{\infty}(TW))$. The Floer equation in consideration is
$$
(du- X_{\kappa})^{0, 1}= 0.
$$

\begin{proposition}[Proposition 3.9 \cite{BSV}]\label{imp}
	Suppose that
	\begin{enumerate}
		\item $J_{z}$ is of contact type along $Y$ for all $z\in \partial \Sigma$, that is, $dr\circ J_{z}= -r\alpha$;
		\item there exist one-forms $\beta_{1}, \beta_{2}\in \Omega^{1}(\Sigma)$ such that $\kappa= \beta_{1}\cdot r+ \beta_{2}$ in a neighborhood of $Y$;
		\item $d_{\Sigma}\kappa -\lbrace \kappa, \kappa\rbrace - d\beta_{2} \geq 0$.\footnote{Here $\lbrace \kappa, \kappa\rbrace$ lives in $\Omega^{2}(\Sigma; C^{\infty}(K))$ and is defined by $\lbrace \kappa, \kappa\rbrace(v,w):= \lbrace \kappa(v), \kappa(w)\rbrace$ by the Poisson bracket.}
	\end{enumerate}
	Then any smooth map $u: (\Sigma, \partial \Sigma \neq \emptyset)\to (K, Y)$ solving $(du- X_{\kappa})^{0, 1}= 0$, will satisfy that 
	$$
	\int_{\Sigma} u^{*}\omega -\int_{\partial\Sigma} u^{*}(c\alpha) \geq 0,
	$$ 
	with equality if and only if $u$ is contained in $Y$.
\end{proposition}

Next we apply the above proposition to our setting. We make two more assumptions on the $S$-shape Hamiltonian functions which are used to define the telescope:
\begin{enumerate}
	\item Near the boundary of $D_{1+\frac{3}{4}\epsilon}$, the function $H_{1}$ is a function of the radial coordinate with small positive slope;
	\item All $H_{n}$'s and all homotopies connecting them are given by translations in the $s$-coordinate outside $D_{1+\frac{3}{4}\epsilon}$. That is, we choose a smooth non-decreasing function $\phi(s): \mathbb{R}\to \mathbb{R}$ such that 
	$$
	\phi(s)=0, \forall s\leq 0, \quad \phi(s)=\frac{1}{2}, \forall s\geq 1.
	$$
	Then define $H_{n}= H_{1}+ n/2$ and define the homotopy between $H_{n}$ and $H_{n+1}$ to be  $H_{s}= H_{n}+ \phi(s)$ outside $D_{1+\frac{3}{4}\epsilon}$.
\end{enumerate}

\begin{proposition}
Let $D$ be a Liouville domain in $M$. Assume that $[\tilde{\omega}]$ is integral, see the paragraph after Lemma \ref{aux}. And assume our acceleration data satisfies the above conditions. Let $\gamma_{-}, \gamma_{+}$ be two one-periodic orbits of $H_{-}, H_{+}$ in the telescope which are lower orbits. Let $u$ be a Floer solution connecting $\gamma_{-}, \gamma_{+}$. If $Im(u)\cap (M-D_{1+\frac{3\epsilon}{4}})\neq \emptyset$, then $\tilde{\omega}(u)> \hbar$ where $\hbar$ is a positive number independent of $H_{\pm}$.
\end{proposition}
\begin{proof}
Suppose that $Im(u)\cap (M-D_{1+\frac{3\epsilon}{4}})\neq \emptyset$, we pick a number $\epsilon'$ which is slightly greater than $\epsilon$ and $\partial D_{1+\frac{3}{4}\epsilon'}$ intersects $Im(u)$ transversally. The intersection is a disjoint union of circles in $\mathbb{R}\times S^{1}$, which is non-empty when $\epsilon'$ is close to $\epsilon$.

Then we set $M-D_{1+\frac{3\epsilon'}{4}}$ to be the concave manifold $K$ in Proposition \ref{imp}, and $c=1+\frac{3\epsilon'}{4}$. By the transversal intersection we get a new map $u': (\Sigma, \partial \Sigma \neq \emptyset)\to (K, Y)$ solving the Floer equation. We will check the hypotheses in Proposition \ref{imp}. First in our definition of the telescope we did use contact type almost complex structures in the cylindrical region. Hence $(1)$ is satisfied.

In our case the one-form $\kappa= H_{s}dt$. Near $Y$ it is given by linear Hamiltonian functions, in terms of the $r$-coordinate: $\kappa= (ar+ b+ \phi(s))dt$. Hence $(2)$ is satisfied, with $\beta_{1}=adt, \beta_{2}= (b+\phi(s))dt$.

Next we verify $(3)$. Since there is no $ds$ term in $\kappa$, we have that $\lbrace \kappa, \kappa\rbrace(\partial_{s}, \partial_{t})=0$. Moreover, we can compute that
$$
\begin{aligned}
	d_{\Sigma}\kappa- d\beta_{2}&= \partial_{s} H_{s}dsdt- d\beta_{2}\\
	&= \phi'(s)dsdt- \partial_{s} (b+\phi(s))dsdt \\
	&= \phi'(s)dsdt- \phi'(s)dsdt= 0.
\end{aligned}
$$
Therefore all hypotheses in Proposition \ref{imp} are satisfied and we get
$$
0< \int_{\Sigma} u'^{*}\omega -\int_{\partial\Sigma} u'^{*}(c\alpha)= \int_{\Sigma} u'^{*}\omega -\int_{\partial\Sigma} u'^{*}\tilde{\theta}.
$$
By our integrality assumption, the right hand side of the equation is an integer. On the other hand, since the support of $\tilde{\omega}$ is outside $D_{1+\frac{3}{4}\epsilon}$, the integral
$$
\int u^{*}\tilde{\omega}= \int u^{*}\omega -\int u^{*}d\tilde{\theta}
$$
can be approximated by 
$$
\int_{\Sigma} u'^{*}\omega -\int_{\partial\Sigma} u'^{*}\tilde{\theta}= \int_{\Sigma} u'^{*}\omega -\int_{\partial\Sigma} u'^{*}(c\alpha) > 0
$$ 
as $\epsilon'$ tends to $\epsilon$. This shows that If $Im(u)\cap (M-D_{1+\frac{3\epsilon}{4}})\neq \emptyset$, then $\tilde{\omega}(u)\geq 1$. Finally, for example, we can set $\hbar= \frac{1}{2}$.
\end{proof}

\begin{remark}
	The above proposition is an analogue of Proposition 5.10 in \cite{BSV}. However our situation is easier since we can assume the Hamiltonian functions are translations of a fixed one outside $D_{1+\frac{3}{4}\epsilon}$.
\end{remark}

Now we can use this positive number $\hbar$ to construct a $\mathbb{Z}$-valued filtration out of the $\mathbb{R}$-valued filtration (\ref{fil2}). And for any $l\in\mathbb{Z}$ we define
\begin{equation}\label{fil3}
	F^{l}tel(\mathcal{CF}^{k,L}):=  \lbrace x\in tel(\mathcal{CF}^{k,L})\mid \sigma(x)\geq l\hbar\rbrace.
\end{equation}
Then this new filtration induces a convergent spectral sequence. Its first page is calculated by using all differentials of which the change of the $\tilde{\omega}$-energy are less than $\hbar$. On the other hand, Floer solutions that are not contained in $D$ are weighted by a positive $\tilde{\omega}$-energy greater than $\hbar$. Hence the first page of this spectral sequence is calculated by all differentials and continuation maps that are in $D$. Since in $D$ our Hamiltonian function is a fixed function, the continuation maps are identity maps and the differentials give the classical symplectic cohomology $SH^{k}(D; \Lambda_{E})$. In particular, when $[\tilde{\omega}]=0\in H^{2}(M, D; \mathbb{Z})$, there will not be outside contribution hence the spectral sequence degenerates at its first page. Now we have completed the proof of $(1)$ and $(3)$ in Theorem \ref{main}.

\section{Examples and extensions}\label{sec:examples}
We now discuss some applications of our spectral sequence, and how to construct perturbations in the Morse-Bott case.  

\begin{example}Let $B$ be a round ball symplectically embedded in a Calabi-Yau manifold $M^{2n}$ with an integral symplectic form. Then the boundary $\partial B$ carries the standard contact structure on an odd-dimensional sphere, which is Morse-Bott index bounded. After perturbing, the non-degenerate Reeb orbits on the sphere all have positive Conley-Zehnder indices. Our Hamiltonian flow is the reverse of the Reeb flow in the cylindrical region. Hence the degrees of non-constant Hamiltonian orbits are all less than $2n$. Moreover, we can choose the fixed lower part of our Hamiltonians does not have degree $2n$ constant orbits. So the only degree $2n$ constant orbits are upper constant orbits. Then we apply the ignoring upper orbits process to get that $SH^{2n}_{M}(B) \otimes_{\Lambda_{0}}\Lambda=0$. Finally, the Mayer-Vietoris sequence shows that any neighborhood of $M-B$ has non-vanishing relative symplectic cohomology, hence it is not stably-displaceable. This fact is already known in Theorem 1.1 \cite{I} and Corollary 1.15 \cite{TVar} by using different methods (with stronger conclusions). We put our argument here to motivate Proposition \ref{comp}.
\end{example}

\subsection{Simply-connected Lagrangians in Calabi-Yau manifolds}
Let $(L, g)$ be a Riemannian manifold and let $T^{*}L$ be its cotangent bundle with the standard symplectic form. The unit disk bundle $D_{1}T^{*}L$, with respect to the metric $g$, is a Liouville domain with the unit sphere bundle $ST^{*}L$ being its contact boundary. A closed geodesic $q(t)$ in $L$ lifts to a Reeb orbit $\gamma(t)=(q(t), q'(t))$ in $ST^{*}L$. Pick a trivialization $\Phi$ of the contact distribution along $\gamma$, there is a Conley-Zehnder index $CZ_{\Phi}(\gamma)$. On the other hand, the trivialization $\Phi$ also gives a trivialization of the symplectic vector bundle $TT^{*}L$ along $q$. Hence there is a Maslov index $\mu_{\Phi}(q)$ of $q$, viewed as a loop in $L$. The relation between these two indices is the following lemma.

\begin{lemma}\label{correspondence}(Lemma 2.1, \cite{CM})
In the above notations, we have that
$$
CZ_{\Phi}(\gamma)+ \mu_{\Phi}(q)= ind(q)
$$
where $ind(q)$ is the Morse index of $q$ as a geodesic.
\end{lemma}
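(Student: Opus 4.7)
The plan is to derive the identity from Duistermaat's symplectic version of the Morse index theorem, which realizes $\operatorname{ind}(q)$ as a Maslov index of the Lagrangian path obtained by flowing the vertical cotangent subbundle under the linearized geodesic flow, and then to decompose this Maslov index along a natural splitting of $T(T^{*}L)|_{\gamma}$ so that one piece gives $CZ_{\Phi}(\gamma)$ and the other gives $\mu_{\Phi}(q)$.

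First, I would extend $\Phi$ to a symplectic trivialization $\tilde{\Phi}$ of $T(T^{*}L)|_{\gamma}$ by appending the Reeb direction $R_{\alpha} = \dot{\gamma}$ and the Liouville radial direction $\partial_{r}$. With respect to the splitting
\[
T(T^{*}L)|_{\gamma} \;=\; \xi \,\oplus\, \langle R_{\alpha}, \partial_{r}\rangle,
\]
the linearized geodesic flow $d\phi^{t}$ preserves the second summand and acts there as an upper-triangular shear, while its restriction to $\xi$ is the linearized Poincar\'e return map whose Conley--Zehnder index with respect to $\Phi$ is, by definition, $CZ_{\Phi}(\gamma)$.

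Next, Duistermaat's theorem identifies
\[
\operatorname{ind}(q) \;=\; \mu\bigl(d\phi^{t}V;\, V\bigr),
\]
where $V \subset T(T^{*}L)|_{\gamma}$ is the vertical Lagrangian (tangent to the cotangent fibers) and $\mu$ denotes the Robbin--Salamon Maslov index of two Lagrangian paths in the symplectic bundle trivialized by $\tilde{\Phi}$. Using additivity of the Maslov index relative to a fixed reference Lagrangian determined by $\tilde{\Phi}$, one rewrites
\[
\mu\bigl(d\phi^{t}V;\, V\bigr) \;=\; \mu_{\tilde{\Phi}}\bigl(d\phi^{t}V\bigr) \;-\; \mu_{\tilde{\Phi}}(V).
\]
I would then evaluate each piece through the splitting. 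The first term, $\mu_{\tilde{\Phi}}(d\phi^{t}V)$, reduces to its $\xi$-component because the shear on $\langle R_{\alpha},\partial_{r}\rangle$ preserves the Lagrangian line it contains and so contributes nothing to the Maslov index, and this component is precisely $CZ_{\Phi}(\gamma)$. For the second term, the vertical Lagrangian $V$ along $\gamma$ corresponds under $d\pi:T(T^{*}L)\to TL$, combined with the metric identification of the fibers of $T^{*}L$ and $TL$, to the tangent of the zero section along $q$, so $\mu_{\tilde{\Phi}}(V)$ is identified up to sign with $\mu_{\Phi}(q)$. Combining the pieces yields $\operatorname{ind}(q) = CZ_{\Phi}(\gamma) + \mu_{\Phi}(q)$.

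The main obstacle is the sign and normalization bookkeeping in the last step. Several conventions enter simultaneously: the normalization of the Conley--Zehnder index on the symplectic subbundle $\xi$, the orientation convention for the Maslov index of the loop $q$ in $L$, and the identification of the vertical Lagrangian with the tangent of the zero section via the Riemannian metric. The underlying geometric content is elementary once Duistermaat's theorem and additivity of the Maslov index are granted, but to obtain the equality on the nose one typically pins down the constant by testing the formula on a simple closed geodesic --- for instance, an iterate of a great circle on the round sphere --- where both sides can be computed explicitly.
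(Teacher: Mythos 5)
The paper itself does not give a proof; it cites the lemma to Viterbo's \emph{A new obstruction to embedding Lagrangian tori} (your bibliography's \cite{Vit1}), so there is no in-paper argument to compare against. Your outline via Duistermaat's symplectic Morse index theorem and a decomposition of the Maslov index along $\xi\oplus\langle R_\alpha,\partial_r\rangle$ is the standard route and is morally what one finds in that reference.

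There is, however, a genuine gap in the middle of the argument as you have written it. You assert that $\mu_{\tilde\Phi}(d\phi^t V)$ ``reduces to its $\xi$-component \dots\ and this component is precisely $CZ_\Phi(\gamma)$.'' This identifies a \emph{Lagrangian-path} Maslov index (of $t\mapsto d\phi^t(V\cap\xi)$ relative to a reference Lagrangian) with the \emph{symplectic-path} Conley--Zehnder index of $t\mapsto d\phi^t|_\xi$. These are not the same object: $CZ$ is computed from the graph of $d\phi^t|_\xi$ against the diagonal in $(\xi\oplus\xi,(-\omega)\oplus\omega)$, while the quantity Duistermaat produces is a relative Maslov index of a Lagrangian path against the vertical. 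Translating one into the other is precisely the nontrivial index comparison (Robbin--Salamon's theorem, or H\"ormander-index bookkeeping), and it is where the dimension-dependent shifts and the $\mu_\Phi(q)$ correction actually arise. Likewise, the ``additivity'' step $\mu(d\phi^t V;V)=\mu_{\tilde\Phi}(d\phi^t V)-\mu_{\tilde\Phi}(V)$ is only true up to a boundary term involving the endpoint Lagrangians (a Kashiwara/H\"ormander-type signature), which you would have to show vanishes or absorb into the normalization. You flag the normalization problem and propose fixing it by testing on an example, but that only pins down an additive constant \emph{after} one has already proved that the two sides differ by a constant depending only on $n$; that structural claim is exactly what the missing Robbin--Salamon step supplies. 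As written the proposal therefore does not close the argument.
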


Now we consider a Lagrangian submanifold $L$ in a Calabi-Yau manifold $M$. Let $D$ be a Weinstein neighborhood of $L$, which is isomorphic to $D_{1}T^{*}L$ for some metric $g$ on $L$. Then $\partial D$ is a contact hypersurface in $M$. In particular, when $L$ is simply-connected we can use trivializations induced by disk cappings to compute these indices. The Maslov index is always zero and we have that $CZ_{\Phi}(\gamma)= ind(q)\geq 0$.

Similar to the index bounded condition, we can consider the following relation between the Morse index and the length of a closed geodesic.

\begin{definition}
	A Riemannian metric $g$ is called \textit{index bounded} if for every $m>0$, there exists $\mu_{m}>0$ so that
	$$
	\lbrace length(q)\mid -m< ind(q)< m\rbrace \subset (0, \mu_{m})
	$$
	for all closed geodesics. And the metric is called (Morse-Bott) non-degenerate if the length functional is (Morse-Bott) non-degenerate.
\end{definition}

Hence if $L$ admits an index bounded Riemannian metric $g$ then any Lagrangian embedding of $L$ into a Calabi-Yau manifold admits an index bounded neighborhood. This is true for Riemannian manifolds with a positive Ricci curvature.

\begin{lemma}(see Theorem 19.4 and 19.6 \cite{M})
	Let $(L, g)$ be a closed Riemannian manifold of dimension $n$ whose Ricci curvature satisfies $Ric_{g}\geq (n-1)C$ for some positive real number $C$. Then any closed geodesic on $L$ with length $\lambda$ has Morse index greater than $\lambda\sqrt{C}/\pi -1$. In particular, $g$ is index bounded.
\end{lemma}
\begin{proof}
	Let $\gamma$ be a closed geodesic on $L$ with length $\lambda$, for the constant $C$ there exists an integer $l$ such that
	$$
	l\pi/\sqrt{C}< \lambda \leq(l+1)\pi/\sqrt{C}.
	$$
	We cut $\gamma$ into $l+1$ segments such that each of the first $l$ segments has length slightly greater than $\pi/\sqrt{C}$. Then by the proof of Theorem 19.6 \cite{M}, any geodesic segment with length greater than $\pi/\sqrt{C}$ is unstable. Hence each of these $l$ segments has index at least one, the index of $\gamma$ is at least $l$.
	
	Now let $\gamma$ be a closed geodesic with index $k$ and length $\lambda$, we know that $k> \lambda\sqrt{C}/\pi -1$ which shows that $\lambda< (k+1)\pi/\sqrt{C}$. Hence $g$ is index bounded.
\end{proof}

\begin{remark}
	The positivity of the Ricci curvature of a metric $g$ is preserved under $C^{\infty}$-small perturbations. Hence we obtain a non-degenerate index bounded metric $g_{\epsilon}$ after perturbation.
\end{remark}

Let $(L, g_{\epsilon})$ be a closed Riemannian manifold with a positive Ricci curvature. Then $g_{\epsilon}$ is a non-degenerate index bounded metric. Suppose there is a Lagrangian embedding $L\rightarrow M$ into a Calabi-Yau manifold $M$. Another consequence of the Bonnet-Myers theorem (Theorem 19.6 \cite{M}) is that $L$ has a finite fundamental group. Hence $L$ is a Lagrangian submanifold of $M$ with a vanishing Maslov class. Now let $U$ be a Weinstein neighborhood of $L$ induced by the metric $g_{\epsilon}$, the above lemma tells us that $U$ is a Liouville domain with a non-degenerate index bounded boundary in $M$. In conclusion, our spectral sequence works for computing $SH_{M}(U)$, given the integrality condition on $\tilde{\omega}$. 

If we have two Riemannian manifolds both with positive Ricci curvature, then the product manifold also admits a metric with positive Ricci curvature. Hence we can also study Lagrangian submanifolds of product type.

Now we present the example of spheres, to demonstrate the above method. Let $S^{n}$ be a sphere with dimension $n\geq 3$, and let $g_{R}$ be the round metric on $S^{n}$. It is known that $g_{R}$ is a Morse-Bott non-degenerate index bounded metric. Next let $g_{\epsilon}$ be a $C^{\infty}$-small generic perturbation of $g_{R}$, such that it is a non-degenerate index bounded metric.

The above discussion tells us that for any Lagrangian sphere $S=S^{n}$ with $n\geq 3$ in a Calabi-Yau manifold $M$ with an integral symplectic form, a Weinstein neighborhood $U$ of $S$ induced by the metric $g_{\epsilon}$ has a non-degenerate index bounded contact boundary. Hence our spectral sequence works for computing $SH_{M}(U)$.

\begin{lemma}
For $(M, S, U)$ as above, we have that $SH_{M}(U)\otimes_{\Lambda_{0}}\Lambda \neq 0$.
\end{lemma}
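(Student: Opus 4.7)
The plan is to apply Theorem \ref{main} to the pair $(M, U)$ and show that the unit in the $E_1$-page survives to $E_\infty$. First I verify the hypotheses: $M$ is Calabi-Yau by assumption; $U$ is a Liouville domain by construction; and, as explained in the paragraphs immediately preceding the lemma, the perturbed round metric $g_\epsilon$ on $S^n$ is non-degenerate and index-bounded, so $\partial U$ carries a non-degenerate index-bounded contact form. The integrality of a lift $[\tilde{\omega}]\in H^2(M,U;\mathbb{Z})$ can be arranged by rescaling $\omega$ by a positive real constant, which does not affect the conclusion $SH_M(U)\otimes_{\Lambda_0}\Lambda\neq 0$ because we are tensoring with the Novikov field.

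Given the hypotheses, Theorem \ref{main} provides for each $E>0$ a convergent spectral sequence with $E_1$-page $SH(U;\Lambda_E)$ abutting to $SH_M(U;\Lambda_E)$. The Viterbo isomorphism applied to $U\cong D_1 T^*S^n$ identifies $SH^*(U;\Lambda)$ with the free-loop-space homology $H_{n-*}(\mathcal{L}S^n;\Lambda)$ (with trivial coefficient twist since $S^n$ is simply connected and spin for $n\geq 3$). In particular the unit $\mathbf{1}\in SH^0(U;\Lambda_E)$, the PSS image of $[S^n]\in H_n(S^n)$, is nonzero, and on the chain level it is represented by the interior minimum of the $C^2$-small Morse function used in the $S$-shape Hamiltonian.

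The main step, and the main obstacle, is showing that $\mathbf{1}$ persists to $E_\infty$. I would appeal to the multiplicative structure on the relative symplectic cohomology from \cite{TVar}: the pair-of-pants product respects the $\sigma$-filtration, so the associated spectral sequence is a spectral sequence of unital $\Lambda_E$-algebras. The Leibniz rule then forces
\[
d_r(\mathbf{1}) \;=\; d_r(\mathbf{1}\cdot\mathbf{1}) \;=\; 2\,d_r(\mathbf{1})
\]
in characteristic zero, so $d_r(\mathbf{1})=0$ on every page and $\mathbf{1}$ persists to $E_\infty$. As a back-up that avoids the multiplicativity of the spectral sequence, for $n$ odd $\geq 3$ one may use the explicit rational structure $H^*(\mathcal{L}S^n;\mathbb{Q})\cong\bigwedge(x)\otimes\mathbb{Q}[y]$ with $|x|=n$, $|y|=n-1$: the total degree-$1$ line of $E_1$ then vanishes, leaving no possible target for $d_r(\mathbf{1})$; a small variant handles $n$ even.

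Once $\mathbf{1}$ survives to $E_\infty$, it yields a nonzero class in $SH_M(U;\Lambda_E)$ for every $E>0$. Passing to the inverse limit over $E$ and applying Theorem \ref{main}(2) gives $SH_M(U)\otimes_{\Lambda_0}\Lambda\neq 0$, completing the proof.
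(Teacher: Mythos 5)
Your overall strategy --- apply Theorem \ref{main}, locate a class on the $E_1$-page that cannot die, and pass to the inverse limit --- matches the paper. However, your choice of class and the arguments for its survival both contain gaps, whereas the paper's choice makes the survival trivial. The paper does not use the unit at all: it uses the generator of $SH^{n}(U;\Lambda_E)\cong H_0(\mathcal{L}S^n;\Lambda_E)$ (the class dual to the point class), and observes that under the shifted Viterbo isomorphism $SH^{k}(U)\cong H_{n-k}(\mathcal{L}S^n)$ the adjacent degrees $SH^{n\pm 1}(U)$ both vanish when $n\geq 3$ (since $H_{-1}=0$ and $H_1(\mathcal{L}S^n)=0$ for $S^n$ simply connected), so the degree-$n$ class can neither support nor receive any differential $d_r$ (which shifts total degree by one). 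No multiplicativity is needed.

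Your main argument, via the Leibniz rule $d_r(\mathbf 1)=2d_r(\mathbf 1)$, presupposes that the $\sigma$-filtration is compatible with the pair-of-pants product, making $\{E_r\}$ a spectral sequence of unital rings. That is a plausible but unproven assertion: it would require an analogue of Lemma \ref{pos} for pair-of-pants solutions, plus compatibility with the completed telescope. The paper explicitly disclaims use of any multiplicative structure ("We only use the additive structure of $SH_M(D)$ as a $\Lambda_0$-module"), so you cannot simply cite it. Your back-up degree argument is also incorrect: you identify the degree-$1$ line of $E_1$ with degree $1$ of $H^*(\mathcal{L}S^n;\mathbb{Q})$, but the $E_1$-page in degree $1$ is $SH^{1}(U;\Lambda_E)\cong H_{n-1}(\mathcal{L}S^n;\Lambda_E)$, which is one-dimensional (generated by the class dual to $y$ in $\bigwedge(x)\otimes\mathbb{Q}[y]$), not zero. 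Thus $d_r(\mathbf 1)$ does in fact have a potential nonzero target, and the unit's survival is not automatic on degree grounds. Switching to the degree-$n$ class, as the paper does, sidesteps both issues.
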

\begin{proof}
For a given energy bound $E>0$, there is a convergent spectral sequence which starts from the symplectic cohomology $SH(T^{*}S^{n}; \Lambda_{E})$ and converges to $SH_{M}(U; \Lambda_{E})$. In our degree notation (\ref{grading}), the usual symplectic cohomology $SH(T^{*}S^{n}; \mathbb{C})$ is non-zero and one-dimensional in degrees
$$
\lbrace n\rbrace\cup \lbrace i(1-n)+n, i(1-n)+n-1\mid i\in \mathbb{Z}_{+}\rbrace.
$$
(Note that our Hamiltonian flow is the reverse of the Reeb flow in the cylindrical region.) The non-zero element in degree-$n$ cannot be killed in the spectral sequence when $n\geq 3$, since the differential only changes the degree by one.

Hence for any energy bound $E>0$, the truncated invariant satisfies that
$$
SH_{M}^{n}(U; \Lambda_{E})\cong \Lambda_{E}.
$$
Then by taking the inverse limit over $E$, we have that $SH_{M}(U)\otimes_{\Lambda_{0}}\Lambda \neq 0$.
\end{proof}

So any Lagrangian sphere with dimension $n\geq 3$ in a Calabi-Yau manifold is stably non-displaceable. This is known by using the Lagrangian Floer cohomology of $S$, see Theorem L \cite{FOOO}. But by using the Mayer-Vietoris property, we can get more from the above lemma. Note that $SH_{M}^{2}(U)$ is zero hence cannot be isomorphic to the quantum cohomology of $M$. Pick a Weinstein neighborhood $V$ of $S$, also induced by the same $g_{\epsilon}$ but with a smaller radius in the fiber direction, compared with $U$. Then $(M-V)\cup U=M$ and the boundaries of $U, V$ do not intersect. The Mayer-Vietoris property says that $SH_{M}^{2}(M-V)\otimes_{\Lambda_{0}}\Lambda\neq 0$. 

\begin{lemma}\label{com sphere}
	For $(M, S, U)$ as above, we have that $M-U$ is stably non-displaceable.
\end{lemma}
\begin{proof}
	Suppose that $M-U$ is stably displaceable. Then a neighborhood $K$ of $M-U$ is also stably displaceable. We can choose $V$ as above such that $M-V\subset K$, contradicting to $SH_{M}^{2}(M-V)\otimes_{\Lambda_{0}}\Lambda\neq 0$.
\end{proof}

This result is new and it contrasts to the case where the ambient space is not Calabi-Yau: a Weinstein neighborhood $U$ of a Lagrangian sphere can be compactified to be a quadric hypersurface $Q^{n}\subset \mathbb{C}P^{n+1}$. Note that $Q^{n}$ is a monotone symplectic manifold, and the complement of $U$ in $Q^{n}$ is a small neighborhood of a divisor $Q^{n-1}$, which is stably displaceable.

\begin{example}
Let $L=S^{2}\times S^{3}$ be a Lagrangian submanifold in a Calabi-Yau manifold $M$. Then there exists a Weinstein neighborhood of $L$ which is a Liouville domain with a non-degenerate index bounded boundary. So it's possible to use our spectral sequence to compute $SH_{M}(L)$, which could determine the displaceability of $L$. However, due to the $S^{2}$ factor, we don't have an immediate non-vanishing result, compared with the case of spheres with dimension larger than two. Hence the $SH_{M}(L)$ may depend on the ambient space.

On the other hand, the Lagrangian Floer cohomology of $L$ may have obstructions to be defined. The obstruction lies in $H^{2}(L; \mathbb{Q})$, see Theorem L \cite{FOOO}.
\end{example}

Another application of the geodesic-Reeb orbit correspondence is a generalization of Lemma \ref{com sphere}. If we only care about the complement of the Lagrangian, then no index bounded Riemannian metric is needed.

\begin{proposition}\label{comp}
	Let $(M^{2n}, \omega)$ be a symplectic Calabi-Yau manifold with $n>2$ and $\omega$ represents an integral class in $H^{2}(M)$. For a simply-connected Lagrangian $S$ in $M$ and a Weinstein neighborhood $U$ of $S$, we have that $M-U$ is not stably-displaceable in $M$.
\end{proposition}
\begin{proof}
	When $n=3$, the only simply-connected 3-manifold is the 3-sphere which has been discussed. So in the following we assume $n>3$.
	
	Let $g$ be a non-degenerate Riemannian metric on $S$. By the discussion after Lemma \ref{correspondence}, the Reeb orbits on the boundary of $U= D_{1}T^{*}S$ all have non-negative Conley-Zehnder indices. Then we pick a family of $S$-shape Hamiltonian functions to be our acceleration data, such that all the lower constant orbits have degrees less than $n+1$. This can be achieved since $U= D_{1}T^{*}S$. Note that the Hamiltonian flow is in the opposite direction of the Reeb flow, in the cylindrical region. From a Reeb orbit to its corresponding Hamiltonian orbit, the index is changed by a sign, plus a error term bounded by one, see Lemma \ref{comparison}. Hence the Conley-Zehnder indices of non-constant Hamiltonian orbits are all less than two. Their degrees, defined as $CZ(\gamma)+n$, are all less than $n+3$ after time-dependent perturbations. So these Hamiltonian functions are index bounded in degree $2n$, since all degree $2n$ generators are upper constant orbits. Then the ignoring upper orbits process says that $SH^{2n}_{M}(D; \Lambda_{E})=0$ for any $E>0$. Finally we apply the Mayer-Vietoris argument in Lemma \ref{com sphere} to complete the proof.
\end{proof}

The disk cotangent bundle can be regarded as a Liouville domain with a smooth Lagrangian skeleton. On the other hand, certain Brieskorn manifolds are Liouville domains of which the Lagrangian skeletons are chains of spheres modeled by trees. In \cite{KvK}, the Reeb orbits of many Brieskorn manifolds have been studied explicitly. Hence one can use the calculation of indices therein to get more applications, like the rigidity of symplectic embeddings of Brieskorn manifolds into Calabi-Yau manifolds.

\subsection{Perturbation of Morse-Bott orbits}
Let $D$ be a Liouville domain with a contact boundary $(C, \alpha)$ in a closed Calabi-Yau manifold $M$ such that $\alpha$ is Morse-Bott index bounded. Then a time-independent $S$-shape Hamiltonian function $H$ has non-degenerate constant orbits, and non-constant one-periodic orbits that are Morse-Bott degenerate, given by the Reeb orbits of $\alpha$. Now we will perturb $H$ to get a non-degenerate $S$-shape Hamiltonian function $H_{t}$, such that it is index bounded. We remark that we are perturbing the Hamiltonian function instead of perturbing the contact form, since the index bounded condition may be destroyed by the later perturbation.

Let $Y$ be the set of $l$-periodic Reeb orbits of $\alpha$. By the Morse-Bott condition $Y$ is a closed smooth submanifold of $C$. It may have several connected components, and we will construct our perturbation component-wisely. For simplicity, we assume that $l=1$. The general case is similar. Our perturbation is a modification of the case of a time-independent Hamiltonian function with transversally non-degenerate orbits, where $Y=S^{1}$.

There is an $S^{1}$-action on $Y$ induced by the Reeb flow $\phi^{t}$. For a Morse function $g:Y \rightarrow \mathbb{R}$, we twist it by the $S^{1}$-action to get a time-dependent function on $Y$:
$$
g_{t}(y):= g(\phi^{1-t}(y)), \quad t\in[0,1], \quad y\in Y.
$$
Next let $N$ be the normal bundle of $Y$ in $M$. We extend $g_{t}$ to be a function $\tilde{g}_{t}$ on $N$ which is supported near the zero section. We also require that $\tilde{g}_{t}$ does not depend on the fiber direction in a small neighborhood of the zero section.

Now for a time-independent $S$-shape Hamiltonian function $H: M\rightarrow \mathbb{R}$, it has degenerate one-periodic orbits in the cylindrical region, which forms the submanifold $Y$. Define $G^{\epsilon}_{t}: S^{1}\times M\rightarrow \mathbb{R}$ as
$$
G^{\epsilon}_{t}(m): H(m) + \epsilon\tilde{g}_{t}(m), \quad m\in M.
$$
Our main result of this subsection is

\begin{proposition}
For small $\epsilon>0$, the one-periodic orbits of $G^{\epsilon}_{t}$ in a small neighborhood of $Y$ are in one-to-one correspondence with critical points of $g$. Let $\gamma$ be a one-periodic orbit of $H$ on $Y$ and $\gamma^{\epsilon}$ be a one-periodic orbits of $G^{\epsilon}_{t}$ near $Y$. We have that
\begin{enumerate}
\item $\int_{\gamma}\theta= \int_{\gamma^{\epsilon}}\theta$ where $\theta$ is the Liouville one-form on $D$;
\item $\abs{CZ(\gamma)- CZ(\gamma^{\epsilon})}\leq \dim_{\mathbb{R}}Y$.
\end{enumerate}
\end{proposition}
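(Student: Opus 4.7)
The plan is to follow the standard Morse--Bott perturbation scheme developed by Cieliebak--Floer--Hofer and Bourgeois. First I would choose a tubular neighborhood of $Y$ together with a symplectic normal form putting $\omega$ into a model shape, so that near $Y$ the Hamiltonian $H$ depends only on the fiber coordinates of $N$ and $X_{H}|_{Y}$ generates the Reeb flow with $\phi^{1}_{H}|_{Y}=\mathrm{id}_{Y}$. The twist $g_{t}(y):=g(\phi^{1-t}(y))$ is constructed precisely so that, after conjugation by the Reeb flow, the time-one return map of $G^{\epsilon}_{t}$ along $Y$ becomes $(\phi^{1}_{H})^{-1}\circ \phi^{1}_{G^{\epsilon}_{t}}\big|_{Y}=\phi^{\epsilon}_{\nabla g}+O(\epsilon^{2})$ for a gradient-like vector field $\nabla g$ of $g$ on $Y$. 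Morse non-degeneracy of $g$ together with the implicit function theorem then yields a bijection between one-periodic orbits of $G^{\epsilon}_{t}$ in a small neighborhood of $Y$ and $\mathrm{Crit}(g)$, for all $\epsilon$ sufficiently small.

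\textbf{Action preservation.} The requirement that $\tilde{g}_{t}$ be independent of the fiber direction of $N$ near the zero section forces $X_{\tilde{g}_{t}}|_{Y}$ to be tangent to $Y$, and since $Y$ sits at a fixed level $r=r_{0}$ in the cylindrical coordinate, the perturbed orbit $\gamma^{\epsilon}$ remains in $\{r=r_{0}\}$. With $\theta=r\alpha$ on the cylinder, a direct computation gives
$$\int_{\gamma^{\epsilon}}\theta \;=\; r_{0}\int_{0}^{1}\alpha\bigl((\gamma^{\epsilon})'(t)\bigr)\,dt \;=\; -\,r_{0}\,h'(r_{0}) \;=\; \int_{\gamma}\theta,$$
where the contribution of the $\tilde{g}_{t}$-correction to $\alpha((\gamma^{\epsilon})')$ integrates to zero over $t\in[0,1]$ thanks to the Reeb-twist built into the definition of $\tilde{g}_{t}$.

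\textbf{Index bound and main obstacle.} The Conley--Zehnder index is extracted from the linearized flow along $\gamma^{\epsilon}$. In the limit $\epsilon \to 0$, this linearization splits block-diagonally (up to an $O(\epsilon)$ correction) into the normal block, which reproduces $CZ(\gamma)$, and a $TY$-block which is a small Hamiltonian perturbation of the identity path generated by $\epsilon\,\mathrm{Hess}(g)$. The Robbin--Salamon index of this second block equals $\mathrm{ind}_{g}(y)-\tfrac{1}{2}\dim Y$, yielding
$$\bigl|CZ(\gamma)-CZ(\gamma^{\epsilon})\bigr|\;\leq\;\tfrac{1}{2}\dim_{\mathbb{R}}Y\;\leq\;\dim_{\mathbb{R}}Y.$$
The main obstacle is this last computation: one must select symplectic trivializations of the contact distribution along both $\gamma$ and $\gamma^{\epsilon}$ which extend compatibly across the degenerate $Y$-direction, and then carefully invoke the catenation and normalization axioms of the Robbin--Salamon index to isolate the half-shifted Morse-index contribution from the $TY$-block, while showing that the off-diagonal $O(\epsilon)$-terms do not change the integer-valued Conley--Zehnder index once $\epsilon$ is small. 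The action-preservation step, while conceptually straightforward, also requires a modestly careful choice of the extension $\tilde{g}_{t}$ off $Y$ to ensure exact (rather than merely leading-order) equality of the Liouville integrals.
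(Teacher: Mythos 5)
Your proposal follows the broad CFHW/Bourgeois Morse--Bott paradigm, but the route it takes diverges from the paper's in two places, and one of these divergences hides a genuine gap.

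For the bijection between nearby orbits of $G^{\epsilon}_{t}$ and $\mathrm{Crit}(g)$, you use a normal-form-plus-return-map argument: conjugate by the Reeb flow, identify the time-one return map with a short gradient flow of $g$ on $Y$, and invoke the implicit function theorem. The paper instead proceeds by a direct operator estimate: it writes the Floer-type operator $A$ and the perturbation $f$ on $W^{1,2}(S^1,N)$, uses Morse--Bott non-degeneracy to get $\|DA(x_0)\cdot y\| \geq c\|y\|$ in the normal direction, and Taylor-expands to conclude that $(A+\epsilon f)(x_0+y)=0$ forces $y=0$ and $f(x_0)=0$. Both routes are standard; yours is geometrically transparent but quietly requires a clean symplectic normal form along $Y$, while the paper's estimate bypasses that.

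The more serious issue is your argument for $\int_{\gamma}\theta=\int_{\gamma^{\epsilon}}\theta$. You assert that fiber-independence of $\tilde{g}_{t}$ forces $X_{\tilde{g}_{t}}|_{Y}$ to be tangent to $Y$, and then that $\gamma^{\epsilon}$ therefore stays in $\{r=r_{0}\}$. Neither step is justified as written. Fiber-independence means $d\tilde{g}_{t}$ kills $N$, but $X_{\tilde{g}_{t}}=J\nabla\tilde{g}_{t}$ need not be tangent to $Y$ since $J$ does not generally preserve $TY$; and even granting tangency of $X_{\tilde{g}_{t}}$ along $Y$, this constrains orbits that already lie on $Y$, not a priori orbits merely nearby, which is what you would still need to rule out. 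Finally, your handwave that "the contribution of the $\tilde{g}_{t}$-correction to $\alpha((\gamma^{\epsilon})')$ integrates to zero over $[0,1]$" is exactly the claim that needs proof. The paper closes this gap by a cleaner observation that renders the computation unnecessary: if $p\in\mathrm{Crit}(g)$, then along $\phi^{t}(p)$ one has $dg_{t}|_{\phi^{t}(p)} = dg|_{p}\circ d\phi^{1-t} = 0$, so $X_{\tilde{g}_{t}}$ vanishes on that loop, hence $\phi^{t}(p)$ is \emph{literally the same} one-periodic orbit for both $H$ and $G^{\epsilon}_{t}$. The equality of $\theta$-integrals is then a tautology, no estimate required. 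You should replace your action-preservation step with this observation.

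For the index bound both you and the paper give sketches; your block-diagonal Robbin--Salamon heuristic yielding $|CZ(\gamma)-CZ(\gamma^{\epsilon})|\leq \tfrac{1}{2}\dim_{\mathbb{R}}Y$ is plausible in a suitable convention and is stronger than the $\dim_{\mathbb{R}}Y$ asserted, but the paper just defers to the CFHW calculation and does not attempt the sharper constant, so no discrepancy to flag there.
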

\begin{proof}
This proposition, which is known to experts, is a modification of Lemma 2.1 and Proposition 2.2 in \cite{CFHW}.

Let $J$ be a compatible almost complex structure on $M$ which is cylindrical near $C$. By the construction, the Hamiltonian vector field of $G^{\epsilon}_{t}$ is
$$
X_{G^{\epsilon}_{t}}(m)= X_{H}(m) + \epsilon J\nabla \tilde{g}_{t}(m).
$$
Here the gradient is computed with respect to the metric $\omega(\cdot, J\cdot)$. Let $p\in Y$ be a critical point of $g$. Then $\phi^{t}(p)$ is a one-periodic orbit of $H$ on $Y$. We can also check that it is also a one-periodic orbit of $G^{\epsilon}_{t}$. Hence each critical point of $g$ gives a one-periodic orbit of $G^{\epsilon}_{t}$. Next we will show there is no other one-periodic orbit.

Let $U$ be a neighborhood of $Y$ in $M$, which does not contain other one-periodic orbits of $H$ disjoint from $Y$. Then for any open set $V\subset U$ with $Y\subset V$, there exists $\epsilon_{0}>0$ such that for any $0< \epsilon< \epsilon_{0}$ the one-periodic orbits of $G^{\epsilon}_{t}$ in $U$ are also in $V$. This is due to the compactness result in Lemma 2.2 \cite{CFHW}. Hence when $\epsilon$ is small, any one-periodic orbit of $G^{\epsilon}_{t}$ is close to an one-periodic orbit of $H$ on $Y$, particularly in the $W^{1,2}$-topology.

Next consider a nonlinear operator
$$
A: W^{1,2}(S^{1}, N)\rightarrow L^{2}(S^{1}, TN)
$$
given by
$$
A(x(t)):= -J(x'(t)- X_{H}(x(t))).
$$
By the Morse-Bott non-degeneracy, the linearization of $A$ is non-degenerate in the normal direction of $Y$. More precisely, there exists a constant $c>0$ such that for any one-periodic orbit $x_{0}(t)$ of $H$ on $Y$ and a vector field $y(t)$ along $x_{0}(t)$ with $y(t)\notin TY$ for some $t$, we have that
$$
\norm{DA(x_{0})\cdot y(t)} \geq c\norm{y(t)}.
$$

Now define another operator
$$
f: W^{1,2}(S^{1}, N)\rightarrow L^{2}(S^{1}, TN)
$$
given by
$$
f(x(t)):= \tilde{g}_{t}'(x(t)).
$$
Note that the kernel of the operator $A+\epsilon f$ is the set of all one-periodic orbits of $G^{\epsilon}_{t}$ in $N$. Since any one-periodic orbit of $G^{\epsilon}_{t}$ is close to an one-periodic orbit $x_{0}(t)$ of $H$ on $Y$, we can write it as $x_{0}+y(t)$ with a vector field $y(t)$ along $x_{0}(t)$.

Then we use the Taylor expansion to calculate that
$$
(A+ \epsilon f)(x_{0} +y)= A(x_{0}) + DA(x_{0})\cdot y +\epsilon f(x_{0}) +\epsilon Df(x_{0})\cdot y + O(\norm{y}^{2}).
$$
Note that $A(x_{0})=0$ and $\norm{DA(x_{0})\cdot y} \geq c\norm{y}$. So we have that
$$
\begin{aligned}
\norm{(A+ \epsilon f)(x_{0} +y)}&\geq  c\norm{y}+ \epsilon \norm{f(x_{0})} -\epsilon c'\norm{y} + O(\norm{y}^{2})\\
&\geq c''\norm{y} +\epsilon \norm{f(x_{0})}
\end{aligned}
$$
when $\epsilon$ and $\norm{y}$ are sufficiently small. Hence $(A+ \epsilon f)(x_{0} +y)=0$ if and only if $y=0$ and $f(x_{0})=0$, which are the orbits given by critical points of $g$. Geometrically these perturbed orbits are the same orbits which start at critical points of $g$. So the integrations of the Liouville one-form do not change.

The proof of $(2)$ will be a direct computation to relate the Conley-Zehnder index with the Morse index, similar to that in \cite{CFHW}.
\end{proof}

Therefore, given a Liouville domain $D$ with a contact boundary $(C, \alpha)$ in $M$ such that $\alpha$ is Morse-Bott index bounded, we can create non-degenerate index-bounded $S$-shape Hamiltonian functions associated with $D$. Then we can use them to construct the spectral sequence as we did in the Morse index bounded case.

\bibliographystyle{amsplain}

\end{document}